\begin{document}
\newcommand {\emptycomment}[1]{} 

\baselineskip=15pt
\newcommand{\nc}{\newcommand}
\newcommand{\delete}[1]{}
\nc{\mfootnote}[1]{\footnote{#1}} 
\nc{\todo}[1]{\tred{To do:} #1}

\nc{\mlabel}[1]{\label{#1}}  
\nc{\mcite}[1]{\cite{#1}}  
\nc{\mref}[1]{\ref{#1}}  
\nc{\meqref}[1]{\eqref{#1}} 
\nc{\mbibitem}[1]{\bibitem{#1}} 

\delete{
\nc{\mlabel}[1]{\label{#1}  
{\hfill \hspace{1cm}{\bf{{\ }\hfill(#1)}}}}
\nc{\mcite}[1]{\cite{#1}{{\bf{{\ }(#1)}}}}  
\nc{\mref}[1]{\ref{#1}{{\bf{{\ }(#1)}}}}  
\nc{\meqref}[1]{\eqref{#1}{{\bf{{\ }(#1)}}}} 
\nc{\mbibitem}[1]{\bibitem[\bf #1]{#1}} 
}

\newcommand {\comment}[1]{{\marginpar{*}\scriptsize\textbf{Comments:} #1}}
\nc{\mrm}[1]{{\rm #1}}
\nc{\id}{\mrm{id}}  \nc{\Id}{\mrm{Id}}
\nc{\admset}{\{\pm x\}\cup (-x+K^{\times}) \cup K^{\times} x^{-1}}

\def\a{\alpha}
\def\ad{associative D-}
\def\padm{$P$-allowable~}
\def\asi{ASI~}
\def\aybe{aYBe~}
\def\b{\beta}
\def\bd{\boxdot}
\def\bbf{\overline{f}}
\def\bF{\overline{F}}
\def\bbF{\overline{\overline{F}}}
\def\bbbf{\overline{\overline{f}}}
\def\bg{\overline{g}}
\def\bG{\overline{G}}
\def\bbG{\overline{\overline{G}}}
\def\bbg{\overline{\overline{g}}}
\def\bT{\overline{T}}
\def\bt{\overline{t}}
\def\bbT{\overline{\overline{T}}}
\def\bbt{\overline{\overline{t}}}
\def\bR{\overline{R}}
\def\br{\overline{r}}
\def\bbR{\overline{\overline{R}}}
\def\bbr{\overline{\overline{r}}}
\def\bu{\overline{u}}
\def\bU{\overline{U}}
\def\bbU{\overline{\overline{U}}}
\def\bbu{\overline{\overline{u}}}
\def\bw{\overline{w}}
\def\bW{\overline{W}}
\def\bbW{\overline{\overline{W}}}
\def\bbw{\overline{\overline{w}}}
\def\btl{\blacktriangleright}
\def\btr{\blacktriangleleft}
\def\calo{\mathcal{O}}
\def\ci{\circ}
\def\d{\delta}
\def\dd{\diamondsuit}
\def\D{\Delta}
\def\frakB{\mathfrak{B}}
\def\G{\Gamma}
\def\g{\gamma}
\def\gg{\mathfrak{g}}
\def\hh{\mathfrak{h}}
\def\k{\kappa}
\def\l{\lambda}
\def\ll{\mathfrak{L}}
\def\lh{\leftharpoonup}
\def\lr{\longrightarrow}
\def\N{Nijenhuis~}
\def\o{\otimes}
\def\om{\omega}
\def\opa{\cdot_{A}}
\def\opb{\cdot_{B}}
\def\p{\psi}
\def\r{\rho}
\def\ra{\rightarrow}
\def\rep{representation~}
\def\rh{\rightharpoonup}
\def\rr{{\Phi_r}}
\def\s{\sigma}
\def\st{\star}
\def\ss{\mathfrak{sl}_2}
\def\ti{\times}
\def\tl{\triangleright}
\def\tr{\triangleleft}
\def\v{\varepsilon}
\def\vp{\varphi}
\def\vth{\vartheta}
\def\wn{\widetilde{N}}
\def\wb{\widetilde{\beta}}

\newtheorem{thm}{Theorem}[section]
\newtheorem{lem}[thm]{Lemma}
\newtheorem{cor}[thm]{Corollary}
\newtheorem{pro}[thm]{Proposition}
\theoremstyle{definition}
\newtheorem{defi}[thm]{Definition}
\newtheorem{ex}[thm]{Example}
\newtheorem{rmk}[thm]{Remark}
\newtheorem{pdef}[thm]{Proposition-Definition}
\newtheorem{condition}[thm]{Condition}
\newtheorem{question}[thm]{Question}
\renewcommand{\labelenumi}{{\rm(\alph{enumi})}}
\renewcommand{\theenumi}{\alph{enumi}}

\nc{\ts}[1]{\textcolor{purple}{MTS:#1}}
\font\cyr=wncyr10


\title{Admissible Yang-Baxter equation for Nijenhuis perm algebras}

 \author[Ma]{Tianshui Ma\textsuperscript{*}}
 \address{School of Mathematics and Statistics, Henan Normal University, Xinxiang 453007, China}
         \email{matianshui@htu.edu.cn}
 \author[Song]{Feiyan Song}
 \address{School of Mathematics and Statistics, Henan Normal University, Xinxiang 453007, China}
         \email{songfeiyan@stu.htu.edu.cn}

 \thanks{\textsuperscript{*}Corresponding author}

\date{\today}

 \begin{abstract}
 In this paper, on one hand, based on the classical perm Yang-Baxter equation, we investigate under what conditions a perm algebra must be a Nijenhuis perm algebra. On the other hand, we derive the compatible conditions between classical perm Yang-Baxter equation and Nijenhuis operator by a class of Nijenhuis perm bialgebras.
 \end{abstract}

\subjclass[2020]{
17B38, 
16T10,   
16T25.   
}

\keywords{Nijenhuis operator; perm bialgebra; Yang-Baxter equation}

\maketitle




\allowdisplaybreaks

\section{Introduction}
 Nijenhuis operators are very useful in the deformation theory of algebras, bialgebras theory, integrable systems, almost complex structures, Nijenhuis geometry, etc.

 Perm algebras are a class of associative algebras, namely, a (right) perm algebra is a triple $(\gg,\cdot)$, where $\gg$ is a linear space. $\cdot: \gg\o \gg \longrightarrow \gg$ is a linear map, such that, for all $x,y,z\in \gg$
 \begin{eqnarray}\label{eq:ft}
 (xy)z=x(yz)=x(zy).
 \end{eqnarray}
 For example, let $\gg=K\{e_1,e_2\}$ be a 2-dimensional vector space satisfying the operation $e_1 e_1=e_1$ and $e_2 e_1=e_2$, then $\gg$ is a perm algebra.

 In 2002, Kolesnikov and Sartayev \cite{KS} obtained the necessary and sufficient conditions for a left-symmetric algebra to be embeddable into a differential perm-algebra. More recently, in 2004, Mashurov and Sartayev \cite{MS} proved that any metabelian Lie algebra can be embedded into an algebra in the subvariety of perm algebras. Also in 2024, Hou \cite{Hou} studied the extending structures by the unified product for perm algebras, and gave some properties for non-abelian extension as a special extending structure. He also introduced perm bialgebras, equivalently characterized by Manin triples of perm algebras and certain matched pairs of perm algebras, especially focused on coboundary perm bialgebras leading to the perm Yang-Baxter equation (there called "$\mathcal{S}$-equation"). In 2025, Lin, Zhou and Bai \cite{LZB} proved that there is a bialgebra structure for a perm algebra, i.e., perm bialgebra, or for a pre-Lie algebra, i.e., pre-Lie bialgebras, which could be characterized by the fact that its affinization by a quadratic pre-Lie algebra or a quadratic perm algebra respectively gives an infinite-dimensional Lie bialgebra. They also studied the perm Yang-Baxter equation and its solutions. We note that a perm algebra in \cite{Hou} is a right perm algebra, while a perm algebra in \cite{LZB} is a left prem algebra.

 According to the associative Yang-Baxter equation induced by coboundary ASI biaglebras, in 2024, the first named author and Long \cite{MLo} gave a method to obtain a Nijenhuis associative algebra and also considered the compatible conditions between the associative Yang-Baxter equation and the Nijenhuis operator via Nijenhuis ASI bialgebras. Though a perm algebra must be an associative algebra, the Yang-Baxter equation given in \cite{Hou,LZB} for perm algebras are {\bf essentially different} from the associative Yang-Baxter equation for associative algebras. In view of this fundamental difference, in this paper, we pay attention to the study for perm Yang-Baxter equation and Nijenhuis perm algebra.

 The paper is organized as follows. In Section \ref{se:npb}, we introduce the notions of a \N perm algebra and a representation of a \N perm algebra by infinitesimal deformation. Then, based on the results about perm bialgebras in \cite{Hou}, we present the notion of a \N perm bialgebra and give the equivalent characterizations by using a matched pair of Nijenhuis perm algebras and a Manin triple of \N Frobenius perm algebra. We focus on the coboundary case of \N perm bialgebra, then obtain $S$-admissible perm Yang-Baxter equations. We also investigate that how we can obtain \N perm bialgebras in terms of $\mathcal{O}$-operators on \N perm algebras. We end this section with the classification of 2-dimensional quasitriangular noncommutative perm bialgebras. In Section \ref{se:n}, we derive a method to obtain a \N operator on a perm (co)algebra highly depending on the classical perm Yang-Baxter equation introduced in \cite{Hou}. Some examples are also given.\\

 \noindent{\bf Notations:} Throughout this paper, we fix a field $K$. All vector spaces, tensor products, and linear homomorphisms are over $K$. We denote by $\id_M$ the identity map from $M$ to $M$, by $\tau$ the flip map.

 \section{$S$-admissible perm Yang-Baxter equations} \label{se:npb} In this section, we mainly find the compatible conditions between perm Yang-Baxter equation and \N operator by establishing \N perm bialgebras.

\subsection{Nijenhuis perm algebras and representations} We introduce the notions of a \N operator on a perm algebra and a representation of a \N perm algebra by means of infinitesimal deformation.

 \begin{defi} \cite{Hou,LZB} \label{de:l} Let $(\gg, \cdot)$ be a perm algebra, $V$ be a vector space and $\ell, r: \gg\lr End(V)$ be two linear maps. The triple $(V, \ell, r)$ is called a {\bf representation} of $\gg$, if, for all $x,y \in \gg$ and $v\in V$, the  equations below hold:
 \begin{eqnarray}
 &vr(xy)=(vr(x))r(y)=(vr(y))r(x),&\label{eq:1}\\
 &\ell(xy)v=\ell(x)(\ell(y)v)=\ell(x)(vr(y))=(\ell(x)v)r(y).&\label{eq:2j}
 \end{eqnarray}

 Let $(V_1,\ell_1,r_1)$ and $(V_2,\ell_2,r_2)$ be two representations of two perm algebras $(\gg,\cdot)$ and $(\mathfrak{h},\circ)$, respectively. A {\bf homomorphism} from $(V_1,\ell_1,r_1)$ to $(V_2,\ell_2,r_2)$ consists of a perm algebra homomorphism $\phi: \gg\rightarrow \mathfrak{h}$ and a linear map $f: V_1\rightarrow V_2$ such that the following conditions are satisfied:
 \begin{eqnarray}
 f(\ell_1(x)v)=\ell_2(\phi(x))f(v),~~f(v r_1(x))=f(v)r_2(\phi(x)),
 \end{eqnarray}
 where $x\in g$ and $v\in V_1$.
 \end{defi}

 \begin{lem}\label{lem:bgu} Let $\mu, m:\gg\o \gg\lr \gg$ (we write $\mu(x\o y)=x\cdot y$, $m(x\o y)=x\bullet y$), $\eta^{\ell}, \eta^{r}, \tau^{\ell}, \tau^{r}: \gg\lr End(V)$ be linear maps and $s, t$ be parameters. Define the following new multiplication and linear maps:
 \begin{eqnarray*}
 &x\cdot_{s,t} y=s x\cdot y+t x\bullet y, ~~{\ell}^{s,t}=s\eta^{\ell}+t\tau^{\ell}, ~~r^{s,t}=s\eta^{r}+t\tau^{r},&
 \end{eqnarray*}
 where $x, y\in \gg$. Then
 \begin{enumerate}[(1)]
   \item $(\gg,\cdot_{s,t})$ is a perm algebra if and only if $(\gg,\cdot)$, $(\gg, \bullet)$ are perm algebras, and for all $x,y,z\in \gg$,
   \begin{eqnarray*}
   (x\bullet y)\cdot z+(x\cdot y)\bullet z=x\cdot (y\bullet z)+x\bullet (y\cdot z)=x\cdot (z\bullet y)+x\bullet (z\cdot y).
   \end{eqnarray*}
   \item $(V,{\ell}^{s,t},r^{s,t})$ is a representation of $(\gg,\cdot_{s,t})$ if and only if $(V,\eta^{\ell},\eta^{r})$ is a representation of $(\gg,\cdot)$, $(V,\tau^{\ell},\tau^{r})$ is a representation of $(\gg,\bullet)$ and for all $x,y \in \gg$, $u\in V$,
   \begin{eqnarray*}
   &&\eta^{\ell}(x\bullet y)u+\tau^{\ell}(x\cdot y)u=\eta^{\ell}(x)(\tau^{\ell}(y) u)+\tau^{\ell}(x) (\eta^{\ell}(y)u)\\
   &&\hspace{38mm}=\eta^{\ell}(x) (u\tau^r(y))+\tau^{\ell}(x)(u\eta^{r}(y))=(\tau^{\ell}(x)u)\eta^{r}(y)
   +(\eta^{\ell}(x)u)\tau^r(y),\\
   &&u\eta^r(x\bullet y)+u\tau^r(x\cdot y)=(u\tau^r(y))\eta^{r}(x)+ (u\eta^{r}(y))\tau^r(x)= (u\tau^r(x))\eta^{r}(y)+(u\eta^{r}(x))\tau^r(y).
   \end{eqnarray*}
 \end{enumerate}
 \end{lem}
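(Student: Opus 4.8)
The plan is to treat $s$ and $t$ as free (formal) parameters and to verify both equivalences by substituting the deformed operations into the defining identities and then separating the resulting expressions according to their bidegree in $(s,t)$. Since $\cdot$ and $\bullet$ are bilinear and $\eta^\ell,\eta^r,\tau^\ell,\tau^r$ are linear, each defining identity becomes a polynomial identity in $s,t$ whose homogeneous components of bidegrees $s^2$, $st$ and $t^2$ may be compared separately. Matching these three components is exactly what yields the three listed conditions, and conversely reassembling the three conditions (weighted by $s^2$, $st$, $t^2$) recovers the deformed identity; thus the whole argument is a bookkeeping of coefficients rather than a conceptual difficulty.

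For part (1), I would expand the three quantities $(x\cdot_{s,t}y)\cdot_{s,t}z$, $x\cdot_{s,t}(y\cdot_{s,t}z)$ and $x\cdot_{s,t}(z\cdot_{s,t}y)$ using $x\cdot_{s,t}y=s\,x\cdot y+t\,x\bullet y$ and bilinearity; each becomes a sum of four terms carrying the coefficients $s^2,st,st,t^2$, e.g.
\[
(x\cdot_{s,t}y)\cdot_{s,t}z=s^2(x\cdot y)\cdot z+st\big((x\bullet y)\cdot z+(x\cdot y)\bullet z\big)+t^2(x\bullet y)\bullet z,
\]
and similarly for the other two. Comparing coefficients in the equalities among the three expressions, the $s^2$-part gives the perm identity for $(\gg,\cdot)$, the $t^2$-part gives the perm identity for $(\gg,\bullet)$, and the $st$-part gives precisely the stated compatibility relation. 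The ``if'' direction then follows by recombining these three identities.

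For part (2), I would perform the same expansion for the representation axioms \eqref{eq:1} and \eqref{eq:2j} applied to $(V,\ell^{s,t},r^{s,t})$ over $(\gg,\cdot_{s,t})$, now substituting $\ell^{s,t}=s\eta^\ell+t\tau^\ell$, $r^{s,t}=s\eta^r+t\tau^r$ and $\cdot_{s,t}$, and again expanding by bilinearity. The $s^2$- and $t^2$-parts of \eqref{eq:1} reproduce the right-action axioms of the representations $(V,\eta^\ell,\eta^r)$ of $(\gg,\cdot)$ and $(V,\tau^\ell,\tau^r)$ of $(\gg,\bullet)$, while its $st$-part yields the second displayed mixed condition; the $s^2$- and $t^2$-parts of \eqref{eq:2j} give the corresponding left-action axioms, and its $st$-part yields the first displayed mixed condition. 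As in part (1), the ``if'' direction is obtained by combining these identities weighted by the appropriate monomials in $s,t$.

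The only genuine point requiring care --- and the step I would state explicitly --- is the legitimacy of comparing coefficients in the ``only if'' direction: this needs the deformed identities to hold identically in $s$ and $t$ (equivalently, over $K[s,t]$), so that vanishing of a polynomial forces vanishing of each homogeneous component. Granting this, both directions reduce to matching the $s^2$-, $st$- and $t^2$-pieces, and I expect no essential obstacle beyond keeping the many mixed terms correctly organized.
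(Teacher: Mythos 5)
Your proposal is correct and is exactly the coefficient-comparison argument that the paper leaves to the reader with the single word ``Straightforward'': expanding each defining identity of a perm algebra (respectively of a representation) in the bidegrees $s^2$, $st$, $t^2$ yields the two undeformed structures and the mixed compatibility conditions, and your remark that the ``only if'' direction requires the identities to hold for generic (or all) $s,t$ is the right caveat. No discrepancy with the paper's approach.
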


 \begin{proof}
 Straightforward.
 \end{proof}

  \begin{thm}\label{thm:rl} Let $(V,\eta^{\ell},\eta^{r})$ be a representation of $(\gg, \cdot)$, $(V, {\ell}^{s,t}, {r}^{s,t})$ be a representation of $(\gg, \cdot_{s,t})$, $N\in End(\gg)$ and $\alpha\in End(V)$ be two linear maps. Then $(s\id_\gg+t N, s\id_V+t \a)$ is a homomorphism from $(V, {\ell}^{s,t}, {r}^{s,t})$ of $(\gg, \cdot_{s,t})$ to $(V, \eta^{\ell}, \eta^{r})$ of $(\gg, \cdot)$ if and only if for all $x, y\in \gg$,
 \begin{eqnarray}
 &x\bullet y=N(x)\cdot y+x\cdot N(y)-N(x\cdot y),&\label{eq:vb}\\
 &N(x\bullet y)=N(x)\cdot N(y),&\label{eq:yq}\\
 &\tau^{\ell}(x)=\eta^{\ell}(N(x))+\eta^{\ell}(x)\circ\alpha-\alpha\circ \eta^{\ell}(x),&\label{eq:lc}\\
 &\eta^{\ell}(N(x))\circ\alpha=\alpha \circ\tau^{\ell}(x),&\label{eq:xm}\\
 &\tau^r(x)=\eta^r(N(x))+\eta^r(x)\circ \alpha-\alpha\circ\eta^r(x),&\label{eq:fk}\\
 &\eta^r(N(x))\circ\alpha=\alpha\circ \tau^r(x).\label{eq:ty}
 \end{eqnarray}
 \end{thm}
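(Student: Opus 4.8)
The plan is to reduce the statement to comparing coefficients of the formal parameters $s,t$. By Definition \ref{de:l}, the pair $(s\id_\gg+tN,\,s\id_V+t\alpha)$ being a homomorphism of representations is equivalent to three requirements: (i) $\phi:=s\id_\gg+tN$ is a perm algebra homomorphism $(\gg,\cdot_{s,t})\to(\gg,\cdot)$, that is $\phi(x\cdot_{s,t}y)=\phi(x)\cdot\phi(y)$; (ii) $f:=s\id_V+t\alpha$ intertwines the left actions, $f(\ell^{s,t}(x)v)=\eta^{\ell}(\phi(x))f(v)$; and (iii) $f$ intertwines the right actions, $f(v\,r^{s,t}(x))=f(v)\,\eta^{r}(\phi(x))$. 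I would treat $s$ and $t$ as free commuting parameters, so each of (i)--(iii) becomes a polynomial identity in $s,t$ with values in $\gg$ or $V$; since both sides of each are homogeneous of degree two in $(s,t)$, the identity holds if and only if the coefficients of $s^2$, $st$ and $t^2$ agree separately. Note that the two representation hypotheses serve only to make the phrase ``homomorphism of representations'' meaningful; they play no role in the computation itself.

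For (i), expanding $\phi(x\cdot_{s,t}y)=\phi(s\,x\cdot y+t\,x\bullet y)$ gives $s^2\,x\cdot y+st\,(x\bullet y+N(x\cdot y))+t^2\,N(x\bullet y)$, whereas $\phi(x)\cdot\phi(y)=s^2\,x\cdot y+st\,(N(x)\cdot y+x\cdot N(y))+t^2\,N(x)\cdot N(y)$. The $s^2$-coefficients coincide automatically, equating the $st$-coefficients yields \eqref{eq:vb}, and equating the $t^2$-coefficients yields \eqref{eq:yq}.

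For (ii) and (iii) I would run the same expansion. Using $\ell^{s,t}=s\eta^{\ell}+t\tau^{\ell}$ in (ii), the $s^2$-terms again match, the $st$-terms give $\tau^{\ell}(x)+\alpha\ci\eta^{\ell}(x)=\eta^{\ell}(N(x))+\eta^{\ell}(x)\ci\alpha$, which is \eqref{eq:lc}, and the $t^2$-terms give $\alpha\ci\tau^{\ell}(x)=\eta^{\ell}(N(x))\ci\alpha$, which is \eqref{eq:xm}; condition (iii) produces \eqref{eq:fk} and \eqref{eq:ty} in exactly the same fashion. The reverse implication is then immediate: if all six displayed equations hold, the three polynomial identities (i)--(iii) hold for every $s,t$, so the pair is a homomorphism. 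The only delicate point I anticipate is the composition-order bookkeeping in the right-action case, where the convention $v\,\eta^{r}(x)=\eta^{r}(x)(v)$ forces $f(v)\,\eta^{r}(x)$ to read as $\eta^{r}(x)\ci\alpha$ and $\alpha(v\,\eta^{r}(x))$ as $\alpha\ci\eta^{r}(x)$ when acting on $v$; this is precisely what fixes the order of composition appearing in \eqref{eq:fk}--\eqref{eq:ty}.
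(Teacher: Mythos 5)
Your proposal is correct and is exactly the computation the paper intends: the paper's proof is simply ``It is direct,'' and your expansion of the three homomorphism identities as polynomials in the formal parameters $s,t$ followed by comparison of the $s^2$, $st$ and $t^2$ coefficients is that direct verification, with the coefficient matching producing precisely Eqs.~(\ref{eq:vb})--(\ref{eq:ty}).
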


 \begin{proof} It is direct.
 \end{proof}

 By Eqs.(\ref{eq:vb}) and (\ref{eq:yq}), we have:
 \begin{defi}\label{de:5t} A {\bf Nijenhuis perm algebra} is a pair $(\gg, N)$ including a perm algebra $\gg$ and a Nijenhuis operator $N$ on $\gg$, i.e, a linear map $N:\gg\lr \gg$ satisfies
 \begin{eqnarray}\label{eq:ggg}
 N(x)N(y)+{N^2}(xy)=N(N(x)y)+N(xN(y)),
 \end{eqnarray}
 where $x, y\in \gg$.
 \end{defi}

 By Eqs.(\ref{eq:lc})-(\ref{eq:ty}), one can obtain:
 \begin{defi}\label{de:a} A {\bf representation} of a Nijenhuis perm algebra $(\gg, N)$ is a quadruple $(V, \ell, r, \alpha)$, where $(V,\ell,r)$ is a representation of $(\gg, \cdot)$ and $\alpha:V\lr V$ is a linear map, such that, for all $x \in \gg, v\in V$,
 \begin{eqnarray}
 &\ell((N(x))\alpha(v)+\alpha^2(\ell(x)v)=\alpha(\ell(N(x))v)+\alpha(\ell(x)\alpha (v)),&\label{eq:1p}\\
 &\alpha(v)r(N(x))+\alpha^2(vr(x))=\alpha(v r(N(x)))+\alpha(\alpha(v)r(x)).&\label{eq:1i}
 \end{eqnarray}
 A linear map $f:V_1\lr V_2$ is called a {\bf homomorphism} from the representations $(V_1,\ell_1,r_1,\alpha_1)$ to $(V_2,\ell_2,r_2,\alpha_2)$ of $(\gg,\cdot)$ if for all $x\in \gg, u\in V_1$,
 \begin{eqnarray*}
 f(\ell_1(x)u)=\ell_2(x)f(u),~f(u r_1(x))=f(u)r_2(x),~f(\alpha_1(u))=\alpha_2(f(u)).
 \end{eqnarray*}
 If, further, $f$ is bijective, then we call these two representations $(V_1,\ell_1,r_1,\alpha_1)$ and $(V_2,\ell_2,r_2,\alpha_2)$ {\bf equivalent}.
 \end{defi}

 \begin{ex}\label{ex:gh} Let $(\gg,N)$ be a Nijenhuis perm algebra, then $(\gg,L,R,N)$ is a representation of $(\gg,N)$, which is called the {\bf adjoint representation} of $(\gg, N)$, where $L, R$ denote the left and right multiplications respectively.
 \end{ex}

 \begin{pro}\label{pro:B} Let $(\gg, N)$ be a Nijenhuis perm algebra and $(V,\ell,r)$ be a representation of $\gg$, $\alpha:V\lr V$ be a linear map. Define a linear map $N+\alpha:\gg\oplus V \lr \gg\oplus V$ by
 \begin{eqnarray}\label{eq:d}
 (N+\alpha)(x+u)=N(x)+\alpha(u),
 \end{eqnarray}
 and a linear operation by
 \begin{eqnarray}\label{eq:1w}
 (x+u)\ast(y+v)=x y+\ell(x)v+u r(y),
 \end{eqnarray}
 then $(\gg\oplus V, N+\alpha)$ is a Nijenhuis perm algebra if and only if $(V,\ell,r,\alpha)$ is a representation of $(\gg,N)$. This Nijenhuis perm algebra is called the {\bf semi-direct product} of $(\gg,N)$ associated with $(V,\ell,r,\alpha)$, denoted by $(\gg\ltimes_{\ell,r}V, N+\alpha)$.
 \end{pro}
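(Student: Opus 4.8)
The plan is to verify directly the two defining properties of a Nijenhuis perm algebra for the pair $(\gg\oplus V, N+\alpha)$. Since $(V,\ell,r)$ is assumed to be a representation of $\gg$, the semidirect product $(\gg\oplus V,\ast)$ is already a perm algebra: the perm axioms \eqref{eq:ft} for $\ast$ unpack exactly into the perm axioms for $(\gg,\cdot)$ together with the representation conditions \eqref{eq:1}--\eqref{eq:2j}, and all of these hold by hypothesis. Consequently this part requires no new input, and the entire content of the statement reduces to showing that $N+\alpha$ is a Nijenhuis operator on $(\gg\oplus V,\ast)$, i.e.\ that the identity \eqref{eq:ggg} holds for $N+\alpha$, precisely when \eqref{eq:1p} and \eqref{eq:1i} hold.

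The main step is a single expansion. I would take arbitrary $X=x+u$ and $Y=y+v$ with $x,y\in\gg$ and $u,v\in V$, and write out both sides of
\[
(N+\alpha)(X)\ast(N+\alpha)(Y)+(N+\alpha)^2(X\ast Y)=(N+\alpha)\big((N+\alpha)(X)\ast Y\big)+(N+\alpha)\big(X\ast(N+\alpha)(Y)\big)
\]
using the definitions \eqref{eq:d} and \eqref{eq:1w}. Because $N+\alpha$ preserves the decomposition $\gg\oplus V$, and because $\ast$ sends $\gg\o\gg$ into $\gg$ while the terms $\ell(x)v$ and $ur(y)$ land in $V$, this one identity splits into independent component equations, which I would then read off.

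The final step is to match components. The $\gg$-component is exactly
\[
N(x)N(y)+N^2(xy)=N(N(x)y)+N(xN(y)),
\]
which is \eqref{eq:ggg} for $(\gg,N)$ and therefore holds automatically, contributing nothing. The $V$-component splits further: the terms built from $(x,v)$ reproduce \eqref{eq:1p}, and the terms built from $(y,u)$ reproduce \eqref{eq:1i}. For the ``if'' direction these two identities give the full $V$-component by additivity; for the ``only if'' direction one isolates each by setting $u=0$ (respectively $v=0$), which is legitimate since $u$ and $v$ are independent. I do not expect a genuine obstacle, as the argument is purely computational; the only real care needed is the bookkeeping that keeps the $\ell$-contributions (those in $v$) cleanly separated from the $r$-contributions (those in $u$), so that \eqref{eq:1p} and \eqref{eq:1i} decouple rather than merging into a single weaker relation.
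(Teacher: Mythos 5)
Your proposal is correct and follows essentially the same route as the paper: the paper cites \cite{Hou} for the semi-direct product perm algebra structure and defers the Nijenhuis computation to an analogous argument in \cite{MLo}, which is exactly the componentwise expansion you carry out explicitly. Your decomposition of the $V$-component into the $(x,v)$-terms giving Eq.~(\ref{eq:1p}) and the $(u,y)$-terms giving Eq.~(\ref{eq:1i}), with $u=0$ or $v=0$ isolating each for the converse, is the intended argument.
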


 \begin{proof} First by \cite[Proposition 2.5]{Hou} and $(V,\ell,r)$ is a representation of $\gg$, $\gg\oplus V$ is a perm algebra. Next, by a similar discussion in \cite[Proposition 3.6]{MLo}, we can obtain that $(\gg\oplus V,N+\alpha)$ is a Nijenhuis perm algebra if and only if $(V,\ell,r,\alpha)$ is a representation of $(\gg,N)$.
 \end{proof}

 Next we introduce the dual representation of a representation of a \N perm algebra.
 \begin{lem}\label{lem:HO} Let $(\gg,N)$ be a Nijenhuis perm algebra, $(V,\ell,r)$ be a representation of $\gg$ and $\beta: V\lr V$ be a linear map. Then $(V^*,r^*-\ell^*,r^*,\beta^*)$ is a representation of $(\gg,N)$, where ${\ell}^*,r^*:\gg\lr End(V^*)$ by
 \begin{eqnarray*}
 \langle {\ell^*}(x)u^*,v\rangle=\langle u^*,{\ell(x)}v\rangle,
 \langle u^*r^*(x),v\rangle=\langle u^*,vr(x)\rangle,~\forall~x\in \gg, u^*\in V^*, v\in V \end{eqnarray*}
 if and only if for all $x\in \gg, u\in V$, the following equations hold:
 \begin{eqnarray}
 &\beta(\ell(N(x))u)+\ell(x)\beta^2(u)-\ell((N(x))\beta(u)-\beta(\ell(x) \beta(u))=0,&\label{eq:fb}\\
 &\beta(u r(N(x)))+\beta^2(u)r(x)-\beta(u)r((N(x))-\beta(\beta(u)r(x))=0.&\label{eq:fs}
 \end{eqnarray}
 \end{lem}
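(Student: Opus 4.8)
The plan is to read the claim through Definition \ref{de:a}: asserting that $(V^*, r^*-\ell^*, r^*, \beta^*)$ is a representation of the Nijenhuis perm algebra $(\gg,N)$ requires two things, and only the second is conditional. First, the underlying triple $(V^*, r^*-\ell^*, r^*)$ must be a representation of the perm algebra $(\gg,\cdot)$; this is the contragredient (dual) representation and holds unconditionally, being a direct consequence of the defining relations \eqref{eq:1}--\eqref{eq:2j} after transferring operators across the pairing $\langle\cdot,\cdot\rangle$. Second, the Nijenhuis-compatibility identities \eqref{eq:1p} and \eqref{eq:1i} must hold for $\alpha=\beta^*$, $\ell=r^*-\ell^*$, $r=r^*$. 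Hence the whole content of the ``if and only if'' lies in dualizing \eqref{eq:1p} and \eqref{eq:1i} and matching them with \eqref{eq:fb} and \eqref{eq:fs}.

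I would first dispose of \eqref{eq:1i}, which involves only the right action $r^*$. Writing \eqref{eq:1i} for $R=r^*$ and $\alpha=\beta^*$ and pairing both sides against an arbitrary $v\in V$, I repeatedly move the operators off $u^*$ using $\langle u^*r^*(x),v\rangle=\langle u^*,vr(x)\rangle$ and $\langle\beta^*(u^*),v\rangle=\langle u^*,\beta(v)\rangle$. By nondegeneracy of the pairing and arbitrariness of $u^*$, the resulting scalar identity is equivalent to an identity in $V$, which is exactly \eqref{eq:fs}; thus \eqref{eq:1i} $\Leftrightarrow$ \eqref{eq:fs}.

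The substantive step is \eqref{eq:1p}, whose left action $L=r^*-\ell^*$ mixes $\ell$ and $r$. The key bookkeeping observation is that, after pairing against $v$ and transferring the operators, $L(x)$ acts dually on $V$ as $v\mapsto vr(x)-\ell(x)v$; writing $\rho(x)v:=vr(x)-\ell(x)v$, the dualization of \eqref{eq:1p} becomes $\beta(\rho(N(x))v)+\rho(x)\beta^2(v)=\rho(N(x))\beta(v)+\beta(\rho(x)\beta(v))$. Expanding $\rho$ separates this into an ``$r$-part'' and an ``$\ell$-part'': the $r$-part is precisely the left-hand side of \eqref{eq:fs}, while the $\ell$-part is, up to an overall sign, the left-hand side of \eqref{eq:fb}. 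Consequently, once \eqref{eq:1i} (equivalently \eqref{eq:fs}) is in force the $r$-part vanishes and \eqref{eq:1p} reduces to \eqref{eq:fb}; conversely \eqref{eq:fb} together with \eqref{eq:fs} forces both \eqref{eq:1p} and \eqref{eq:1i}. Combining the two steps yields the stated equivalence.

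I expect the main obstacle to be the consistent handling of the contragredient left action $r^*-\ell^*$: one must fix once and for all how a combination of the ``right'' map $r^*$ and the ``left'' map $\ell^*$ acts on $V^*$ and then dualizes on $V$, and then verify that the two halves of \eqref{eq:1p} line up with \eqref{eq:fs} and \eqref{eq:fb} respectively. The delicate points are the sign carried by the $\ell$-part and the fact that \eqref{eq:fs} is exactly what cancels the spurious $r$-part of \eqref{eq:1p}; once the pairing conventions are fixed, the remaining manipulations are routine.
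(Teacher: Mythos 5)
Your proposal is correct and follows essentially the same route as the paper: the paper likewise quotes the unconditional fact that $(V^*,r^*-\ell^*,r^*)$ is the dual representation (citing Hou), dualizes \eqref{eq:1i} across the pairing to obtain \eqref{eq:fs}, and then computes that the dualization of \eqref{eq:1p} for the mixed left action $r^*-\ell^*$ is exactly the \eqref{eq:fs}-expression minus the \eqref{eq:fb}-expression, so the two pairs of identities are equivalent. Your decomposition via $\rho(x)v=vr(x)-\ell(x)v$ is precisely the bookkeeping the paper carries out explicitly.
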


 \begin{proof} We note that $(V^*,r^*-\ell^*,r^*)$ is a representation of $\gg$ by \cite[Lemma 4.2]{Hou}. For all $x\in \gg, u^*\in V^*, v\in V$, we have
 \begin{eqnarray*}
 &&\langle\beta^*(u^*)r^*(N(x))+{\beta^*}^2(u^*r^*(x))-\beta^*(u^*r^*(N(x)))-\beta^*(\beta^*(u^*)r^*(x)), v\rangle\\
 &&=\langle u^*, \beta(vr(N(x)))+\beta^2(v)r(x)-\beta(v)r(N(x))-\beta(\beta(v)r(x))\rangle.
 \end{eqnarray*}
 Similarly,
 \begin{eqnarray*}
 &&\langle (r^*-\ell^*)(N(x))\beta^*(u^*)+{\beta^*}^2((r^*-\ell^*)(x)u^*)-\beta^*((r^*-\ell^*)(N(x))u^*)-\beta^*((r^*-\ell^*)(x) \beta^*(u^*)), v\rangle\\
 &&=\langle u^*, \beta(vr(N(x)))+\beta^2(v)r(x)-\beta(v)r(N(x))-\beta(\beta(v)r(x))\\
 &&\quad-\beta(\ell(N(x))v)-\ell(x)\beta^2(v)+\ell(N(x))\beta(v)+\beta(\ell(x)\beta(v))\rangle.
 \end{eqnarray*}
 Thus Eqs.(\ref{eq:1p}) and (\ref{eq:1i}) hold for $\beta^*$ if and only if Eqs.(\ref{eq:fb}) and (\ref{eq:fs}) hold.
 \end{proof}

 \begin{defi}\label{de:mrt} Let $(\gg,N)$ be a Nijenhuis perm algebra, $(V, \ell,r)$ be a representation of $\gg$, and $\beta:V\lr V$ be a linear map. If Eqs.(\ref{eq:fb}) and (\ref{eq:fs}) hold, then we call that {\bf $\beta$ is admissible to Nijenhuis perm algebra $(\gg,N)$ on $(V, \ell,r)$} or {\bf $(\gg,N)$ is $\beta$-admissible on $(V, \ell,r)$}. When $(V, \ell, r)=(\gg, L, R)$, we say that {\bf $\beta$ is admissible to  $(\gg,N)$} or {\bf $(\gg,N)$ is $\beta$-admissible}.
 \end{defi}

 Let $(V, \ell,r)=(\gg,L,R)$, we have
 \begin{cor}\label{cor:mt} Let $(\gg,N)$ be a Nijenhuis perm algebra. A linear map $S:\gg\lr \gg$ is admissible to $(\gg,N)$ if and only if, for all $x,y\in \gg$
 \begin{eqnarray}
 &S(N(x)y)+xS^2(y)-N(x)S(y)-S(xS(y))=0,&\label{eq:sfh}\\
 &S(xN(y))+S^2(x)y-S(x)N(y)-S(S(x)y)=0.&\label{eq:sh}
 \end{eqnarray}
 \end{cor}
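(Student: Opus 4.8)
The plan is to specialize Definition~\ref{de:mrt} to the adjoint representation. By that definition, the assertion that $S$ is admissible to $(\gg,N)$ means precisely that $S$ is admissible to $(\gg,N)$ on the representation $(V,\ell,r)=(\gg,L,R)$, i.e.\ that the general admissibility identities \eqref{eq:fb} and \eqref{eq:fs} hold with $V=\gg$, $\beta=S$, $\ell=L$ and $r=R$. Thus the whole task reduces to rewriting \eqref{eq:fb} and \eqref{eq:fs} under these substitutions and matching the result against \eqref{eq:sfh} and \eqref{eq:sh}.

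First I would record the defining actions of the adjoint representation, namely $L(x)u=x\cdot u=xu$ and $u\,R(x)=u\cdot x=ux$ for all $x,u\in\gg$. Substituting $\ell(z)w=zw$ throughout \eqref{eq:fb} and taking $u=y$, the four terms $\beta(\ell(N(x))u)$, $\ell(x)\beta^2(u)$, $\ell(N(x))\beta(u)$ and $\beta(\ell(x)\beta(u))$ become $S(N(x)y)$, $xS^2(y)$, $N(x)S(y)$ and $S(xS(y))$ respectively, so \eqref{eq:fb} turns into \eqref{eq:sfh} verbatim.

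The second equation requires only a harmless relabelling. Substituting $w\,r(z)=wz$ into \eqref{eq:fs} yields $S(u\cdot N(x))+S^2(u)\cdot x-S(u)\cdot N(x)-S(S(u)\cdot x)=0$; renaming the module element $u$ as $x$ and the algebra element $x$ as $y$ reproduces \eqref{eq:sh}. Since each of the two directions of the claimed biconditional is exactly one direction of these two equalities between the abstract and concrete identities, the corollary follows at once. I do not expect any genuine obstacle here; the single point demanding care is keeping track of which slot carries the ``module'' element and which the algebra element when passing from \eqref{eq:fs} to \eqref{eq:sh}, since the two are interchanged in the conventional way the statement is written for $(V,\ell,r)=(\gg,L,R)$.
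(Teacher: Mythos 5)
Your proposal is correct and coincides with the paper's (essentially tacit) argument: the corollary is obtained exactly by specializing Eqs.~(\ref{eq:fb}) and (\ref{eq:fs}) of Definition~\ref{de:mrt} to the adjoint representation $(V,\ell,r)=(\gg,L,R)$ with $\beta=S$, followed by the relabelling $u\mapsto x$, $x\mapsto y$ in the second identity. Your bookkeeping of which slot carries the module element matches the paper's Eqs.~(\ref{eq:sfh}) and (\ref{eq:sh}) verbatim.
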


 \subsection{\N perm bialgebras and equivalent characterizations}\mlabel{se:tri}  Dual to the notion of perm algebras, we have: A {\bf perm coalgebra} is a pair $(\gg, \D)$, where $\gg$ is a vector space and $\D: \gg\lr \gg\o \gg$ (Sweedler notation \cite{Sw}: $\D(x)=x_{(1)}\o x_{(2)}$) is a linear map such that for all $x\in\gg$,
 \begin{eqnarray}\label{eq:yds}
 x_{(1)(1)}\o x_{(1)(2)}\o x_{(2)}=x_{(1)}\o x_{(2)(1)}\o x_{(2)(2)}=x_{(1)}\o x_{(2)(2)}\o x_{(2)(1)}.
 \end{eqnarray}

 \begin{pro}\mlabel{pro:de:cl}
  \begin{enumerate}[(1)]
   \item \mlabel{it:de:cl1} If $(\gg, \D)$ is a perm coalgebra, then $(\gg^*, \D^*)$ is a perm algebra, where $\D^*:\gg^*\o \gg^*\lr \gg^*$ is defined by
   $$
   \langle\D^*(\xi\o \eta), x\rangle=\langle \xi, x_{(1)}\rangle \langle \eta, x_{(2)}\rangle, \quad \forall~~\xi, \eta\in \gg^*, x\in \gg.
   $$
   \item \mlabel{it:de:cl2} If $(\gg, \cdot)$ is a finite-dimensional perm algebra, then $(\gg^*, \cdot^*)$ is a perm coalgebra, where $\cdot^*:\gg^*\lr \gg^*\o \gg^*$ is defined by
   $$
   \langle\cdot^*(\xi), x\o y\rangle=\langle \xi, x\cdot y\rangle, \quad \forall~~\xi\in \gg^*, x, y\in \gg.
   $$
 \end{enumerate}
 \end{pro}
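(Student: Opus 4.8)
The plan is to recognize that the defining triple equality \eqref{eq:yds} of a perm coalgebra is exactly the linear dual of the perm identity \eqref{eq:ft}, so that both parts reduce to unwinding the evaluation pairing $\langle\,\cdot\,,\,\cdot\,\rangle$ between $\gg$ and $\gg^*$ and reading off the Sweedler indices.

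For part (1), I would set $\xi\cdot\eta:=\D^*(\xi\o\eta)$ and expand the three triple products directly from the definition of $\D^*$. Tracking the Sweedler legs one finds, for all $x\in\gg$,
\begin{eqnarray*}
\langle(\xi\cdot\eta)\cdot\zeta,\,x\rangle&=&\langle\xi,x_{(1)(1)}\rangle\langle\eta,x_{(1)(2)}\rangle\langle\zeta,x_{(2)}\rangle,\\
\langle\xi\cdot(\eta\cdot\zeta),\,x\rangle&=&\langle\xi,x_{(1)}\rangle\langle\eta,x_{(2)(1)}\rangle\langle\zeta,x_{(2)(2)}\rangle,\\
\langle\xi\cdot(\zeta\cdot\eta),\,x\rangle&=&\langle\xi,x_{(1)}\rangle\langle\eta,x_{(2)(2)}\rangle\langle\zeta,x_{(2)(1)}\rangle,
\end{eqnarray*}
so that the three products are precisely the images of the three summands of \eqref{eq:yds} under $\xi\o\eta\o\zeta$. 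Since \eqref{eq:yds} equates those three tensors for every $x$, pairing against arbitrary $\xi,\eta,\zeta\in\gg^*$ yields $(\xi\cdot\eta)\cdot\zeta=\xi\cdot(\eta\cdot\zeta)=\xi\cdot(\zeta\cdot\eta)$, which is \eqref{eq:ft} on $\gg^*$. No dimension hypothesis is needed here, since functionals on $\gg$ are separated by their values on $\gg$.

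For part (2), I would run the same computation in the opposite direction. Writing $\cdot^*(\xi)=\xi_{(1)}\o\xi_{(2)}$ and pairing each candidate tensor against an arbitrary $x\o y\o z\in\gg\o\gg\o\gg$, the definition of $\cdot^*$ collapses every triple evaluation to a single one:
\begin{eqnarray*}
\langle\xi_{(1)(1)}\o\xi_{(1)(2)}\o\xi_{(2)},\,x\o y\o z\rangle&=&\langle\xi,(x\cdot y)\cdot z\rangle,\\
\langle\xi_{(1)}\o\xi_{(2)(1)}\o\xi_{(2)(2)},\,x\o y\o z\rangle&=&\langle\xi,x\cdot(y\cdot z)\rangle,\\
\langle\xi_{(1)}\o\xi_{(2)(2)}\o\xi_{(2)(1)},\,x\o y\o z\rangle&=&\langle\xi,x\cdot(z\cdot y)\rangle.
\end{eqnarray*}
By \eqref{eq:ft} the three right-hand sides agree, so the three tensors in $\gg^*\o\gg^*\o\gg^*$ have identical pairings against every $x\o y\o z$. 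This is where finite-dimensionality enters: it makes the pairing $\gg^*\o\gg^*\o\gg^*\ti\gg\o\gg\o\gg\lr K$ nondegenerate (equivalently $(\gg\o\gg\o\gg)^*\cong\gg^*\o\gg^*\o\gg^*$), forcing the three tensors to coincide, i.e. \eqref{eq:yds} for $\cdot^*$.

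The computations are routine Sweedler-index manipulations; the only place requiring attention is the third, flipped summand of \eqref{eq:yds}, where the transposition of the legs $x_{(2)(1)}$ and $x_{(2)(2)}$ must be matched to the $x\cdot(z\cdot y)$ term (and dually to the product $\xi\cdot(\zeta\cdot\eta)$) rather than to $x\cdot(y\cdot z)$. Beyond this bookkeeping there is no genuine obstacle.
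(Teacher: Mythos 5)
Your argument is correct and is exactly the standard dualization computation that the paper's one-line proof ("similar to the case of Hopf algebras, see Sweedler") delegates to the literature: pair the triple (co)products against test elements, invoke the perm (co)identity, and conclude by injectivity of $\gg^*\o\gg^*\o\gg^*\hookrightarrow(\gg\o\gg\o\gg)^*$. The only minor remark is that finite-dimensionality in part (2) is needed chiefly so that $\cdot^*$ actually lands in $\gg^*\o\gg^*$ rather than merely in $(\gg\o\gg)^*$; the separation step you attribute it to holds in general.
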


 \begin{proof} Similar to the case of Hopf algebra, see \cite{Sw}. \end{proof}

 Let recall from \cite{Hou,LZB} the definition of a perm bialgebra. Let $\gg$ be a perm algebra. A {\bf perm bialgebra} is a triple $(\gg, \cdot, \D)$ such that $(\gg,\D)$ is a perm coalgebra and, for all $x, y\in\gg$, the compatibility conditions below hold:
 \begin{eqnarray}\label{eq:hh}
  \D(xy)-\tau\D(xy)
  \hspace{-3mm}&=&\hspace{-3mm}\D(yx)-\tau\D(yx)\nonumber \\
  \hspace{-3mm}&=&\hspace{-3mm}(R(y)\o \id)\D(x)-\tau(\id \o R(y))\D(x)+(\id\o R(x))\D(y)-\tau(R(x)\o \id)\D(y),\qquad\\
 \D(xy)\hspace{-3mm}&=&\hspace{-3mm}(L(x)\o \id)\D(y)+(\id \o R(y))\D(x)-(\id\o L(y))\D(x)\nonumber\\
 \hspace{-3mm}&=&\hspace{-3mm}(L(x)\o \id)\D(y)+\tau(\id \o L(x))\D(y)+(\id\o R(y))\D(x)\\
 \hspace{-3mm}&=&\hspace{-3mm}(R(y)\o \id)\D(x)+(\id \o R(x))\D(y)-(\id\o L(x))\D(y)\nonumber\\
 &&\hspace{5mm}-\tau(R(x)\o \id)\D(y)+\tau(L(x)\o \id )\D(y).\nonumber\label{eq:kk}
 \end{eqnarray}

 \subsubsection{Matched pairs of Nijenhuis perm algebras}
 \begin{pro} \cite{Hou} \label{pro:gio} Let $\gg$, $\mathfrak{h}$ be two perm algebras, $(\mathfrak{h},\ell_\gg,r_\gg)$ be a representation of $\gg$ and $(\gg,\ell_\mathfrak{h},r_\mathfrak{h})$ be a representation of $\mathfrak{h}$. Define a bilinear operation $\star$ on $\gg\oplus \mathfrak{h}$ given by
 \begin{eqnarray}\label{eq:cd}
 (a+x)\star(b+y)=ab+\ell_\mathfrak{h}(x)b+ar_\mathfrak{h}(y)+xy+\ell_\gg(a)y+xr_\gg(b), ~\forall~a,b\in \gg,x,y\in \mathfrak{h}.
 \end{eqnarray}
 Then $\gg\oplus \mathfrak{h}$ is a perm algebra if and only if $(\gg,\mathfrak{h},\ell_\gg, r_\gg,\ell_\mathfrak{h},r_\mathfrak{h})$ is a matched pair of perm algebras, that is to say, for all $x,x_1,x_2 \in \gg$, $y,y_1,y_2 \in \mathfrak{h}$, the following equations hold:
 \begin{eqnarray*}
 &&\ell_\mathfrak{h}(y)(x_1x_2)=\ell_\mathfrak{h}(y)(x_2x_1)=(\ell_\mathfrak{h}(y)(x_1))x_2+\ell_\mathfrak{h}(y r_\gg(x_1))(x_2),\label{eq:y7o}\\
 &&(x_1x_2)r_\mathfrak{h}(y)=x_1(x_2 r_\mathfrak{h}(y))+x_1 r_\mathfrak{h}(\ell_\gg(x_2)(y))\\\nonumber
 &&\hspace{19mm}=x_1(\ell_\mathfrak{h}(y)(x_2))+x_1 r_\mathfrak{h}(y r_\gg(x_2))=(x_1 r_\mathfrak{h}(y))x_2+\ell_\mathfrak{h}(\ell_\gg(x_1)(y))(x_2),\label{eq:yo}\\
 &&\ell_\gg(x)(y_1y_2)=\ell_\gg(x)(y_2y_1)=(\ell_\gg(x)(y_1))y_2+\ell_\gg(x r_\mathfrak{h}(y_1))(y_2),\label{eq:yp}\\
 &&(y_1y_2)r_\gg(x)=y_1(y_2 r_\gg(x))+y_1 r_\gg(\ell_\mathfrak{h}(y_2)(x))\\\nonumber
 &&\hspace{19mm}=y_1(\ell_\gg(x)(y_2))+y_1 r_\gg(x r_\mathfrak{h}(y_2))=y_2(y_1 r_\gg(x))+\ell_\gg(\ell_\mathfrak{h}(y_1)(x))(y_2).\label{eq:yo}
 \end{eqnarray*}
 \end{pro}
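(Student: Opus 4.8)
The plan is to verify directly that the perm algebra axiom \eqref{eq:ft}, namely $(UV)W=U(VW)=U(WV)$, holds for the operation $\star$ of \eqref{eq:cd} on $\gg\oplus\mathfrak{h}$, and to read off exactly which identities this forces. Since $\star$ is bilinear, it suffices to test the axiom on homogeneous triples $(U,V,W)$ in which each entry lies entirely in $\gg$ or entirely in $\mathfrak{h}$, giving $2^3=8$ cases according to the membership pattern. The two pure cases $(\gg,\gg,\gg)$ and $(\mathfrak{h},\mathfrak{h},\mathfrak{h})$ reduce $\star$ to the given products on $\gg$ and $\mathfrak{h}$ (all the action terms vanish), so the axiom holds there by the hypothesis that $(\gg,\cdot)$ and $(\mathfrak{h},\circ)$ are perm algebras. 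The whole content therefore lies in the six mixed cases, and the convenient fact is that the set-up is symmetric under interchanging $(\gg,\cdot,\ell_\mathfrak{h},r_\mathfrak{h})$ with $(\mathfrak{h},\circ,\ell_\gg,r_\gg)$, so it is enough to treat the three orderings of pattern $\{\gg,\gg,\mathfrak{h}\}$ and invoke symmetry for $\{\mathfrak{h},\mathfrak{h},\gg\}$.

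For each mixed triple I would expand both equalities $(U\star V)\star W=U\star(V\star W)$ and $(U\star V)\star W=U\star(W\star V)$ using \eqref{eq:cd}, then project the resulting identity onto its $\gg$-component and its $\mathfrak{h}$-component, recording that the $\gg$-part of $(P+\xi)\star(Q+\eta)$ is $PQ+\ell_\mathfrak{h}(\xi)Q+Pr_\mathfrak{h}(\eta)$ and the $\mathfrak{h}$-part is $\xi\eta+\ell_\gg(P)\eta+\xi r_\gg(Q)$. For instance, taking $U=a,V=b\in\gg$ and $W=z\in\mathfrak{h}$, the $\gg$-components of the two equalities yield the first two expansions of $(ab)r_\mathfrak{h}(z)$ appearing in the second matched pair condition, while the $\mathfrak{h}$-components collapse to $\ell_\gg(ab)z=\ell_\gg(a)(\ell_\gg(b)z)$ and $\ell_\gg(ab)z=\ell_\gg(a)(zr_\gg(b))$, which already hold since $(\mathfrak{h},\ell_\gg,r_\gg)$ is a representation of $\gg$ by \eqref{eq:2j}. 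Taking instead $U=z\in\mathfrak{h}$, $V=a,W=b\in\gg$ produces, on the $\gg$-side, the identities $\ell_\mathfrak{h}(z)(ab)=(\ell_\mathfrak{h}(z)a)b+\ell_\mathfrak{h}(zr_\gg(a))b$ and $\ell_\mathfrak{h}(z)(ab)=\ell_\mathfrak{h}(z)(ba)$, i.e. the first matched pair condition, while the $\mathfrak{h}$-side reduces to \eqref{eq:1}; and the third ordering (with the $\mathfrak{h}$-entry in the middle) supplies the remaining expansion $(ar_\mathfrak{h}(z))b+\ell_\mathfrak{h}(\ell_\gg(a)(z))b$ of $(ab)r_\mathfrak{h}(z)$, again with its complementary component absorbed by the representation axioms.

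Collecting the surviving identities from the three $\{\gg,\gg,\mathfrak{h}\}$ orderings gives exactly the two matched pair conditions featuring $\ell_\mathfrak{h}$ and $r_\mathfrak{h}$; by the symmetry noted above, the three $\{\mathfrak{h},\mathfrak{h},\gg\}$ orderings yield verbatim the two conditions featuring $\ell_\gg$ and $r_\gg$, with all pure-action components once more reducing to \eqref{eq:1}--\eqref{eq:2j}. This proves one implication, and the converse is immediate: given the four matched pair conditions together with the perm axioms for $\gg$ and $\mathfrak{h}$ and the two representation axioms, reassembling the component identities in each of the eight cases shows that $\star$ satisfies \eqref{eq:ft}, so $\gg\oplus\mathfrak{h}$ is a perm algebra.

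The genuinely laborious step is the bookkeeping in the mixed cases: each of the six orderings splits into two perm equalities and each equality splits into two components, so one must track twelve identities and correctly separate those that are automatically the assumed representation axioms \eqref{eq:1}--\eqref{eq:2j} from those that are the new compatibility conditions. The care is in matching the three (respectively four) expressions set equal in each displayed condition to the correct triple orderings so that every fragment appears exactly once; the symmetry argument is what keeps this manageable by halving the number of cases that must be computed by hand.
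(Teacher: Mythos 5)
The paper offers no proof of this proposition; it is quoted verbatim from \cite{Hou}, so there is no in-paper argument to compare against. Your blind proof is correct and is the standard direct verification one would expect: the defect $(U\star V)\star W-U\star(V\star W)$ (and likewise with $V,W$ swapped in the second factor) is trilinear, so checking on homogeneous triples is legitimate; the pure cases reduce to the perm axioms on $\gg$ and $\mathfrak{h}$; the $\mathfrak{h}$-components (resp.\ $\gg$-components) of the mixed $\{\gg,\gg,\mathfrak{h}\}$ (resp.\ $\{\mathfrak{h},\mathfrak{h},\gg\}$) cases are exactly instances of the representation axioms \eqref{eq:1}--\eqref{eq:2j}; and the remaining components, collected over the three orderings, reproduce precisely the displayed matched-pair identities, with the $\gg\leftrightarrow\mathfrak{h}$ symmetry of \eqref{eq:cd} halving the work. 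I see no gap.
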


 We extend the notion of a matched pair of perm algebras to \N case.
 \begin{defi}\label{de:mv}  A {\bf matched pair of Nijenhuis perm algebras $(\gg,N_\gg)$ and $(\mathfrak{h},N_\mathfrak{h})$} is a hexatuple $((\gg,N_\gg), (\mathfrak{h}, N_\mathfrak{h}),\ell_\gg, r_\gg,\ell_\mathfrak{h}, r_\mathfrak{h})$, where $(\mathfrak{h}, \ell_\gg, r_\gg, N_\mathfrak{h})$ is a representation of $(\gg, N_\gg)$ and $(\gg, \ell_\mathfrak{h}, r_\mathfrak{h}, N_\gg)$ is a representation of $(\mathfrak{h}, N_\mathfrak{h})$, and $(\mathfrak{h}, \gg, \ell_\gg, r_\gg, \ell_\mathfrak{h}, r_\mathfrak{h})$ is a matched pair of $\gg$ and $\mathfrak{h}$.
 \end{defi}

 \begin{thm}\label{thm:gp} Let $(\gg,N_\gg)$ and $(\mathfrak{h},N_\mathfrak{h})$ be two Nijenhuis perm algebras, $(\gg, \mathfrak{h}, \ell_\gg, r_\gg, \ell_\mathfrak{h}, r_\mathfrak{h})$ be a matched pair of $\gg$ and $\mathfrak{h}$. Define a linear map $N_{\gg+\mathfrak{h}}:\gg \oplus \mathfrak{h} \lr \gg \oplus \mathfrak{h}$ by
 \begin{eqnarray}\label{eq:cgy}
 (N_{\gg+\mathfrak{h}})(a+x)=N_\gg(a)+N_\mathfrak{h}(x), ~\forall a\in \gg,x\in \mathfrak{h},
 \end{eqnarray}
 and a linear operation $\star :(\gg \oplus \mathfrak{h})\times (\gg \oplus \mathfrak{h})\lr \gg \oplus \mathfrak{h}$ by Eq.(\ref{eq:cd}), then $(\gg \oplus \mathfrak{h},N_{\gg+\mathfrak{h}})$ is a Nijenhuis perm algebra if and only if $((\gg,N_\gg), (\mathfrak{h}, N_\mathfrak{h}), \ell_\gg,r_\gg, \ell_\mathfrak{h}, r_\mathfrak{h})$ is a matched pair of $(\gg,N_\gg)$ and $(\mathfrak{h},N_\mathfrak{h})$.
 \end{thm}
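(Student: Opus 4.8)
The plan is to invoke Proposition \ref{pro:gio} in order to separate the ``perm algebra'' part of the conclusion from the ``Nijenhuis'' part, and then to extract the two missing representation conditions of Definition \ref{de:mv} purely from the Nijenhuis identity Eq.(\ref{eq:ggg}) applied to mixed elements. Since $(\gg, \mathfrak{h}, \ell_\gg, r_\gg, \ell_\mathfrak{h}, r_\mathfrak{h})$ is by hypothesis a matched pair of the perm algebras $\gg$ and $\mathfrak{h}$, Proposition \ref{pro:gio} already guarantees that $(\gg \oplus \mathfrak{h}, \star)$ is a perm algebra. Consequently $(\gg \oplus \mathfrak{h}, N_{\gg+\mathfrak{h}})$ is a Nijenhuis perm algebra if and only if the single identity Eq.(\ref{eq:ggg}) holds for $N_{\gg+\mathfrak{h}}$ with respect to $\star$. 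As the matched-pair axioms are a standing hypothesis and are precisely one of the three clauses of Definition \ref{de:mv}, it remains only to show that Eq.(\ref{eq:ggg}) for $N_{\gg+\mathfrak{h}}$ is equivalent to the other two clauses, i.e.\ that $(\mathfrak{h}, \ell_\gg, r_\gg, N_\mathfrak{h})$ is a representation of $(\gg, N_\gg)$ and $(\gg, \ell_\mathfrak{h}, r_\mathfrak{h}, N_\gg)$ is a representation of $(\mathfrak{h}, N_\mathfrak{h})$.

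First I would note that the left-hand minus right-hand side of Eq.(\ref{eq:ggg}) is bilinear in its two arguments, so it suffices to verify the identity on the four homogeneous pairs $(a,b)$, $(x,y)$, $(a,y)$, $(x,b)$ with $a,b \in \gg$ and $x,y \in \mathfrak{h}$. Because taking $x=y=0$ (resp.\ $a=b=0$) in Eq.(\ref{eq:cd}) shows that $\gg$ and $\mathfrak{h}$ are $\star$-subalgebras on which $N_{\gg+\mathfrak{h}}$ restricts to $N_\gg$ and $N_\mathfrak{h}$ respectively, the two pure cases $(a,b)$ and $(x,y)$ recover exactly the Nijenhuis identities for $N_\gg$ and $N_\mathfrak{h}$, which hold by hypothesis. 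Hence the entire content of Eq.(\ref{eq:ggg}) is concentrated in the two mixed cases.

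Next I would expand the mixed cases using $a \star y = a r_\mathfrak{h}(y) + \ell_\gg(a)y$ and $x \star b = \ell_\mathfrak{h}(x)b + x r_\gg(b)$, and then project onto the $\gg$- and $\mathfrak{h}$-summands, which is clean since $N_{\gg+\mathfrak{h}}$ preserves each summand. For the pair $(a,y)$, the $\gg$-component collapses to Eq.(\ref{eq:1i}) for $(\gg, \ell_\mathfrak{h}, r_\mathfrak{h}, N_\gg)$ viewed as a representation of $(\mathfrak{h}, N_\mathfrak{h})$, while the $\mathfrak{h}$-component collapses to Eq.(\ref{eq:1p}) for $(\mathfrak{h}, \ell_\gg, r_\gg, N_\mathfrak{h})$ viewed as a representation of $(\gg, N_\gg)$. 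Symmetrically, the pair $(x,b)$ produces Eq.(\ref{eq:1p}) for $(\gg, \ell_\mathfrak{h}, r_\mathfrak{h}, N_\gg)$ in its $\gg$-component and Eq.(\ref{eq:1i}) for $(\mathfrak{h}, \ell_\gg, r_\gg, N_\mathfrak{h})$ in its $\mathfrak{h}$-component. Assembling the four component identities recovers exactly Eqs.(\ref{eq:1p}) and (\ref{eq:1i}) for both candidate Nijenhuis representations, so Eq.(\ref{eq:ggg}) for $N_{\gg+\mathfrak{h}}$ holds if and only if both representation conditions hold, which is the desired equivalence.

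I expect the only genuine labor to be the bookkeeping in this last step: tracking which of the six summands of $\star$ survives in each homogeneous case and confirming, after applying $N_{\gg+\mathfrak{h}}$ and $N_{\gg+\mathfrak{h}}^2$, that the projected terms match Definition \ref{de:a} term-for-term under the correct identifications ($\alpha = N_\mathfrak{h},\, N = N_\gg$ for the $\gg$-action on $\mathfrak{h}$, and $\alpha = N_\gg,\, N = N_\mathfrak{h}$ for the $\mathfrak{h}$-action on $\gg$). This is mechanical linear algebra with no conceptual obstacle, and once the four component equations are matched against Eqs.(\ref{eq:1p})--(\ref{eq:1i}) the proof closes.
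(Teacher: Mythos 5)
Your proposal is correct, and the computation you outline does close. The paper itself gives no computation at all: its proof of Theorem \ref{thm:gp} is a two-line citation of Proposition \ref{pro:gio} (for the perm-algebra structure on $\gg\oplus\mathfrak{h}$) together with the analogous theorem for associative algebras in \cite{MLo}. Your argument is exactly the content that citation delegates, carried out explicitly in the perm setting: the Nijenhuis defect $N(u)\star N(v)+N^2(u\star v)-N(N(u)\star v)-N(u\star N(v))$ is bilinear in $(u,v)$, so it suffices to test homogeneous pairs; the pure pairs give the Nijenhuis identities for $N_\gg$ and $N_\mathfrak{h}$, which are standing hypotheses; and since $N_{\gg+\mathfrak{h}}$ preserves the two summands, each mixed pair splits into a $\gg$-component and an $\mathfrak{h}$-component. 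I checked the identifications: for the pair $(a,y)$ the $\gg$-component is Eq.(\ref{eq:1i}) for $(\gg,\ell_\mathfrak{h},r_\mathfrak{h},N_\gg)$ over $(\mathfrak{h},N_\mathfrak{h})$ and the $\mathfrak{h}$-component is Eq.(\ref{eq:1p}) for $(\mathfrak{h},\ell_\gg,r_\gg,N_\mathfrak{h})$ over $(\gg,N_\gg)$; the pair $(x,b)$ supplies the remaining two conditions, exactly as you state. Together with the matched-pair hypothesis this is precisely Definition \ref{de:mv}, so the equivalence holds in both directions. The only thing your write-up buys beyond the paper is self-containedness -- the reader need not unfold \cite[Theorem 2.3.2]{MLo} and translate it from the associative to the perm setting -- at the cost of the bookkeeping you already flag as the genuine labor.
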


 \begin{proof} It can be proved by Proposition \ref{pro:gio} and \cite[Theorem 2.3.2]{MLo}.
 \end{proof}

 \subsubsection{Manin triple of a Nijenhuis perm algebra} Let us recall from \cite{Hou} that a bilinear form $\mathfrak{B}$ on a perm algebra $\gg$ is called {\bf invariant} if for all $x,y,z \in \gg$,
 \begin{eqnarray}\label{eq:hi}
 \mathfrak{B}(x y, z)=\mathfrak{B}(y, z x)-\mathfrak{B}(y, x z).
 \end{eqnarray}
 A {\bf Frobenius perm algebra} $(\gg,\mathfrak{B})$ is a perm algebra $\gg$ with a nondegenerate invariant bilinear form $\mathfrak{B}$. A Frobenius perm algebra $(\gg,\mathfrak{B})$ is called {\bf skew-symmetric} if $\mathfrak{B}$ is skew-symmetric.

 \begin{rmk}\label{rmk:kj} If $\mathfrak{B}$ is skew-symmetric, then for all $x,y,z \in \gg$, we have
 \begin{eqnarray}\label{eq:gm}
 \mathfrak{B}(xy,z)=\mathfrak{B}(x,zy).
 \end{eqnarray}
 In fact, by Eq.(\ref{eq:hi}), we have
 \begin{eqnarray*}
 \mathfrak{B}(x,zy)=-\mathfrak{B}(zy,x)=-(\mathfrak{B}(y,xz)-\mathfrak{B}(y,zx))=\mathfrak{B}(xy,z).
 \end{eqnarray*}
 \end{rmk}

 \begin{defi} \cite{Hou,LZB} \label{de:cf} Let $(\gg,\cdot)$ be a perm algebra. Suppose that there is a perm algebra structure $\circ$ on its dual space $\gg^*$, and a perm algebra structure on the direct sum $\gg \oplus \gg^*$ of the underlying vector spaces of $\gg$ and $\gg^*$ which contains both $(\gg,\cdot)$ and $(\gg^*,\circ)$ as subalgebras. Define a bilinear form on $\gg \oplus \gg^*$ by for all $x,y \in \gg$ and $a,b \in \gg^*$,
 \begin{eqnarray}\label{eq:lpo}
 \mathfrak{B}(x+a,y+b)=\langle x, b\rangle-\langle y, a\rangle
 \end{eqnarray}
 If $\mathfrak{B}$ is invariant, so that $(\gg \oplus \gg^*,\mathfrak{B})$ is a skew-symmetric Frobenius perm algebra, then the Frobenius perm algebra
 is called a {\bf (standard) Manin triple of Frobenius perm algebra} associated to  $(\gg,\cdot)$ and $(\gg^*,\circ)$, which is denoted by $((\gg \oplus \gg^*,\mathfrak{B}), \gg, \gg^*)$.
 \end{defi}

 \begin{lem} \cite{Hou} \label{lem:io} Let $(\gg,\cdot)$ be a  perm algebra. Suppose that there is a  perm algebra structure $(\gg^*,\circ)$ on its dual space $\gg^*$. Then there is a Manin triple of Frobenius perm algebra associated to $(\gg,\cdot)$ and  $(\gg^*,\circ)$ if and only if $(\gg,\gg^*,{R_\cdot}^*-{L_\cdot}^*,{R_\cdot}^*,{R_\circ}^*-{L_\circ}^*,{R_\circ}^*)$ is a matched pair of  perm algebras.
 \end{lem}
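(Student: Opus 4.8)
The statement is the perm-algebra analogue of the classical ``Manin triple $\Leftrightarrow$ matched pair'' dictionary, and the plan is to deduce it from Proposition~\ref{pro:gio}. That proposition already identifies the existence of a perm algebra structure on $\gg\oplus\gg^*$ extending $(\gg,\cdot)$ and $(\gg^*,\circ)$ with the matched-pair axioms, \emph{once} the four cross actions $\ell_\gg,r_\gg,\ell_{\gg^*},r_{\gg^*}$ have been specified. Hence the only genuine content of the lemma is to show that demanding the canonical form $\mathfrak{B}$ of Eq.(\ref{eq:lpo}) be invariant is equivalent to forcing those four actions to be exactly $\ell_\gg={R_\cdot}^*-{L_\cdot}^*$, $r_\gg={R_\cdot}^*$, $\ell_{\gg^*}={R_\circ}^*-{L_\circ}^*$ and $r_{\gg^*}={R_\circ}^*$.

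First I would note that any bilinear operation $\star$ on $\gg\oplus\gg^*$ restricting to $\cdot$ on $\gg$ and to $\circ$ on $\gg^*$ is completely recorded by the $\gg$- and $\gg^*$-components of the mixed products $a\star\xi$ and $\xi\star a$ ($a\in\gg,\ \xi\in\gg^*$); naming these components as in Eq.(\ref{eq:cd}) defines four linear maps $\ell_\gg,r_\gg\colon\gg\to End(\gg^*)$ and $\ell_{\gg^*},r_{\gg^*}\colon\gg^*\to End(\gg)$. The form $\mathfrak{B}$ of Eq.(\ref{eq:lpo}) is automatically skew-symmetric, so being a Manin triple amounts exactly to the invariance condition Eq.(\ref{eq:hi}) for $\star$, which I would impose on every triple of arguments drawn from $\gg$ and $\gg^*$.

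The homogeneous triples (all three arguments in $\gg$, or all three in $\gg^*$) contribute nothing, since $\mathfrak{B}$ pairs $\gg$ only against $\gg^*$. The informative constraints come from the mixed triples. For example, taking $a,b\in\gg$ and $\xi\in\gg^*$ in Eq.(\ref{eq:hi}) and expanding $a\star\xi$ together with $a\star b=ab$, $b\star a=ba$ through the definition of $\mathfrak{B}$ yields
\begin{eqnarray*}
-\langle\ell_\gg(a)\xi,b\rangle=\mathfrak{B}(a\star\xi,b)=\mathfrak{B}(\xi,ba)-\mathfrak{B}(\xi,ab)=-\langle\xi,ba\rangle+\langle\xi,ab\rangle,
\end{eqnarray*}
so nondegeneracy of the pairing forces $\langle\ell_\gg(a)\xi,b\rangle=\langle\xi,ba\rangle-\langle\xi,ab\rangle=\langle({R_\cdot}^*-{L_\cdot}^*)(a)\xi,b\rangle$ for all $b$, i.e. $\ell_\gg={R_\cdot}^*-{L_\cdot}^*$. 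The remaining mixed configurations (two slots in $\gg$ and one in $\gg^*$, then the mirror image with two slots in $\gg^*$) are treated identically and pin down $r_\gg={R_\cdot}^*$, $\ell_{\gg^*}={R_\circ}^*-{L_\circ}^*$ and $r_{\gg^*}={R_\circ}^*$; every such step is an equivalence, so $\mathfrak{B}$ is invariant if and only if the four actions are these dual ones.

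Combining the two reductions gives the lemma: a Manin triple exists iff $\gg\oplus\gg^*$ carries an invariant extension structure, iff the four actions are the coadjoint-type maps above together with (by Proposition~\ref{pro:gio}) the matched-pair axioms, which is precisely the statement that $(\gg,\gg^*,{R_\cdot}^*-{L_\cdot}^*,{R_\cdot}^*,{R_\circ}^*-{L_\circ}^*,{R_\circ}^*)$ is a matched pair of perm algebras. The main obstacle I anticipate is purely organizational: unlike its associative counterpart, the perm invariance Eq.(\ref{eq:hi}) is not symmetric in its arguments, so I must track all mixed distributions of $\gg$ and $\gg^*$ among the three slots with care and verify that they collapse to exactly these four identities without generating spurious extra relations; a useful sanity check is that the resulting actions are consistent with Eq.(\ref{eq:gm}) of Remark~\ref{rmk:kj}.
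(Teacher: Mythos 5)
The paper does not prove this lemma at all---it is quoted from \cite{Hou}---so there is no in-text argument to compare against; your proposal is the standard derivation and is correct in outline: the canonical form $\mathfrak{B}$ of Eq.(\ref{eq:lpo}) is automatically skew-symmetric, the homogeneous instances of Eq.(\ref{eq:hi}) are vacuous because $\mathfrak{B}$ pairs $\gg$ only with $\gg^*$, the mixed instances force the four cross actions to be the coadjoint-type maps, and Proposition~\ref{pro:gio} then converts the perm-algebra condition on $\gg\oplus\gg^*$ into the matched-pair axioms. One refinement on the bookkeeping you rightly flagged: of the six mixed configurations, the one with arguments $(a,\xi,b)$ determines $\ell_\gg={R_\cdot}^*-{L_\cdot}^*$ outright, the configuration $(a,b,\xi)$ gives a relation coupling $\ell_\gg$ and $r_\gg$ (namely $\langle ab,\xi\rangle=\langle \xi r_\gg(a)-\ell_\gg(a)\xi,b\rangle$) and hence pins down $r_\gg={R_\cdot}^*$ only after the first is used, and the configuration $(\xi,a,b)$ is then redundant---with the mirror statements for two arguments in $\gg^*$---so the collapse to exactly four identities with no spurious relations does go through, and the converse direction follows by running the same eight verifications with the coadjoint actions substituted in.
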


 We now extend these notions and properties to Nijenhuis perm algebras.

 \begin{defi}\label{de:cl} A {\bf Nijenhuis Frobenius perm algebra} (abbr. NF perm algebra) is a triple $(\gg, N, \mathfrak{B})$, where $(\gg, N)$ is a Nijenhuis perm algebra and $(\gg, \mathfrak{B})$ is a Frobenius perm algebra.
 \end{defi}

 Let $\widehat{N}:\gg\rightarrow \gg$ denote the adjoint linear transformation of $N$ under the nondegenerate bilinear form $\mathfrak{B}$:
 \begin{eqnarray}\label{eq:fg}
 \mathfrak{B}(N(x),y)=\mathfrak{B}(x,\widehat{N}(y)),~ \forall~ x, y \in \gg.
 \end{eqnarray}

 \begin{pro}\label{pro:kl} Let $(\gg,N,\mathfrak{B})$ be a skew-symmetric NF perm algebra. Then for the adjoint operator $\widehat{N}$ in Eq.(\ref{eq:fg}), the quadruple $(\gg^*,R^*-L^*,R^*,{\widehat{N}}^*)$ is a representation of  $(\gg,N)$. Moreover, $(\gg^*,R^*-L^*,R^*,{\widehat{N}}^*)$ is equivalent to $(\gg,L,R,N)$ as representations of $(\gg,N)$. Conversely, let $(\gg,N)$ be a Nijenhuis perm algebra and $S: \gg\lr \gg$ be a linear map that is admissible to $(\gg,N)$. If the representation $(\gg^*,R^*-L^*,R^*,S^*)$ of $(\gg,N)$ is equivalent to $(\gg,L,R,N)$, then there exists a nondegenerate  bilinear form $\mathfrak{B}$ such that $(\gg,N,\mathfrak{B})$ is an NF perm algebra for which $\widehat{N}=S$.
 \end{pro}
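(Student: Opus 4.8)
The plan is to exploit the standard dictionary between nondegenerate bilinear forms on $\gg$ and linear isomorphisms $\gg\to\gg^*$. Concretely, I would define $\phi:\gg\lr\gg^*$ by $\langle\phi(x),y\rangle=\mathfrak{B}(x,y)$ for all $x,y\in\gg$; since $\mathfrak{B}$ is nondegenerate, $\phi$ is a linear isomorphism. For the forward direction the whole claim reduces to showing that $\phi$ is an isomorphism of representations from the adjoint representation $(\gg,L,R,N)$ (Example \ref{ex:gh}) to $(\gg^*,R^*-L^*,R^*,\widehat{N}^*)$, in the sense of Definition \ref{de:a}. Indeed, once $\phi$ is shown to intertwine all the structure maps, transporting $(\gg,L,R,N)$ along the isomorphism $\phi$ simultaneously proves that $(\gg^*,R^*-L^*,R^*,\widehat{N}^*)$ \emph{is} a representation of $(\gg,N)$ and that it is equivalent to $(\gg,L,R,N)$, disposing of both assertions at once.

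The three intertwining identities required by Definition \ref{de:a} are $\phi(xy)=(R^*-L^*)(x)\phi(y)$, $\phi(yx)=\phi(y)R^*(x)$, and $\phi(N(y))=\widehat{N}^*\phi(y)$. I would verify each by pairing against an arbitrary $v\in\gg$ and unwinding the definitions $\langle L^*(x)u^*,v\rangle=\langle u^*,xv\rangle$, $\langle u^*R^*(x),v\rangle=\langle u^*,vx\rangle$, and $\langle\widehat{N}^*u^*,v\rangle=\langle u^*,\widehat{N}(v)\rangle$. The first identity becomes $\mathfrak{B}(xy,v)=\mathfrak{B}(y,vx)-\mathfrak{B}(y,xv)$, which is precisely invariance \eqref{eq:hi}; the second becomes $\mathfrak{B}(yx,v)=\mathfrak{B}(y,vx)$, which is the skew-symmetric consequence \eqref{eq:gm} recorded in Remark \ref{rmk:kj}; and the third becomes $\mathfrak{B}(N(y),v)=\mathfrak{B}(y,\widehat{N}(v))$, which is exactly the defining relation \eqref{eq:fg} of $\widehat{N}$. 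This completes the forward direction.

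For the converse I would run the same dictionary backwards. Let $\phi:\gg\lr\gg^*$ be the given equivalence of representations (replacing it by its inverse if it is presented as a map $\gg^*\to\gg$), and define $\mathfrak{B}(x,y)=\langle\phi(x),y\rangle$; this form is nondegenerate because $\phi$ is an isomorphism. Reading the three homomorphism conditions of Definition \ref{de:a} for $\phi$ and pairing against $v\in\gg$ as above now \emph{produces} the identities $\mathfrak{B}(xy,v)=\mathfrak{B}(y,vx)-\mathfrak{B}(y,xv)$, $\mathfrak{B}(yx,v)=\mathfrak{B}(y,vx)$, and $\mathfrak{B}(N(y),v)=\mathfrak{B}(y,S(v))$. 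The first is invariance \eqref{eq:hi}, so $(\gg,\mathfrak{B})$ is a Frobenius perm algebra and hence $(\gg,N,\mathfrak{B})$ is an NF perm algebra; the third, compared with \eqref{eq:fg}, forces $\widehat{N}=S$, as desired. That $(\gg^*,R^*-L^*,R^*,S^*)$ is genuinely a representation, so that the hypothesis is meaningful, is guaranteed by the admissibility of $S$ via Lemma \ref{lem:HO} and Corollary \ref{cor:mt}.

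I do not expect a serious conceptual obstacle: the argument is the perm-algebra analogue of the classical correspondence between an invariant nondegenerate form and a self-duality of the regular representation, with the Nijenhuis data $N,S$ entering only through the single extra intertwining condition for the operator $\alpha$. The one place demanding genuine care is the bookkeeping with the dual-representation conventions—in particular the left action $R^*-L^*$ and the placement of $L^*$ versus $R^*$—where a sign error or a left/right slip would silently break the identification with \eqref{eq:hi}. I would therefore spell out the pairing formulas for $L^*$ and $R^*$ explicitly before substituting, so that each of the three reductions is mechanical.
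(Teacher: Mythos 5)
Your proposal is correct, and its core computations coincide with the paper's: the paper also defines $\psi:\gg\to\gg^*$ by $\psi(x)y=\mathfrak{B}(x,y)$ and verifies exactly your three pairing identities, reducing them to invariance \eqref{eq:hi}, the skew-symmetry consequence \eqref{eq:gm}, and the adjoint relation \eqref{eq:fg}; the converse is handled the same way in both. The one genuine difference is in how the first assertion (that $(\gg^*,R^*-L^*,R^*,\widehat{N}^*)$ is a representation of $(\gg,N)$) is established. The paper proves it directly: it computes $\mathfrak{B}\bigl(N(x)N(y)+N^2(xy)-N(N(x)y)-N(xN(y)),z\bigr)$ to extract the admissibility identities \eqref{eq:sfh}--\eqref{eq:sh} for $\widehat{N}$, and then invokes Lemma \ref{lem:HO}. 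You instead obtain it by transport of structure along the equivalence $\psi$, which is legitimate since every defining condition of a representation of a Nijenhuis perm algebra (Eqs.~\eqref{eq:1}, \eqref{eq:2j}, \eqref{eq:1p}, \eqref{eq:1i}) is an identity in the structure maps and hence is preserved under an isomorphism intertwining $L$, $R$, $N$ with $R^*-L^*$, $R^*$, $\widehat{N}^*$. Your route eliminates the paper's longest computation; what it does not deliver explicitly is the admissibility of $\widehat{N}$ in the form \eqref{eq:sfh}--\eqref{eq:sh}, which the paper's direct calculation yields as a byproduct and which is reused elsewhere (e.g.\ in Lemma \ref{lem:wb}) --- though of course it still follows from your argument via the ``only if'' direction of Lemma \ref{lem:HO}.
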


 \begin{proof} For all $x,y,z \in \gg$, we have
 \begin{eqnarray*}
 0&=&\mathfrak{B}(N(x)N(y)+{N^2}(xy)-N(N(x)y)-N(xN(y)),z)\\
 &=&\mathfrak{B}(N(x)N(y),z)+{\mathfrak{B}}({N^2}(xy),z)-\mathfrak{B}(N(N(x)y),z)-\mathfrak{B}(N(xN(y)),z)\\
 &\stackrel {(\ref{eq:gm})(\ref{eq:fg})}{=}&\mathfrak{B}(x,\widehat{N}(zN(y)))+\mathfrak{B}(x,{\widehat{N}}^2 (z)y)-\mathfrak{B}(x,\widehat{N}(\widehat{N}(z)y))-\mathfrak{B}(x,\widehat{N}(z)N(y))\\
 &=&\mathfrak{B}(x,\widehat{N}(zN(y))+{\widehat{N}}^2 (z)y-\widehat{N}(\widehat{N}(z)y)-\widehat{N}(z)N(y)).\\
 \end{eqnarray*}
 Then, we obtain
 \begin{eqnarray*}
 \widehat{N}(zN(y))+{\widehat{N}}^2 (z)y-\widehat{N}(\widehat{N}(z)y)-\widehat{N}(z)N(y)=0,
 \end{eqnarray*}
 i.e., Eq.$(\ref{eq:sh})$ holds. On the other hand, by Eqs.(\ref{eq:hi}) and (\ref{eq:fg}), we have:
 \begin{eqnarray*}\label{eq:iuo}
 \widehat{N}(zN(y))-\widehat{N}(N(y)z)+{\widehat{N}}^2 (z)y-y{\widehat{N}}^2(z)-\widehat{N}(z)N(y)+N(y)\widehat{N}(z)-\widehat{N}(\widehat {N}(z)y)+\widehat{N}(y\widehat{N}(z))=0.
 \end{eqnarray*} 
 Then Eq.(\ref{eq:sfh}) holds by Eq.(\ref{eq:sh}). Hence $(\gg^*,R^*-L^*,R^*,{\widehat{N}}^*)$ is a representation of $(\gg,N)$.

 Next, for all $x,y \in \gg $, we define the linear map $\psi:\gg \lr \gg^*$ by
 \begin{eqnarray*}\label{eq:ik}
 \psi(x)y:=\langle\psi(x),y\rangle=\mathfrak{B}(x,y).
 \end{eqnarray*}
 The nondegeneracy of $\mathfrak{B}$ gives the bijectivity of $\psi$. 
 Then by the definition of $\psi$, it is direct to prove that $(\gg^*,R^*-L^*,R^*,{\widehat{N}}^*)$ is equivalent to $(\gg,L,R,N)$ as representation of $(\gg,N)$.

 Conversely, suppose that $\psi:\gg\lr \gg^*$ is the linear isomorphism giving the equivalence between $(\gg,L,R,N)$ and $(\gg^*,R^*-L^*,R^*,S^*)$. Define a bilinear form $\mathfrak{B}$ on $\gg$ by $\mathfrak{B}(x,y)=\langle\psi(x),y\rangle$. Then a similar argument gives the NF perm algebra $(\gg,N,\mathfrak{B})$ and $\widehat{N}=S$.
 \end{proof}

 We now extend the notion of Manin triple to NF perm algebras.
 \begin{defi}\label{de:fu} Let $(\gg,\cdot,N)$ and $(\gg^*,\circ,S^*)$ be two Nijenhuis perm algebras. A {\bf Manin triple of an NF perm algebra} associated to $(\gg,\cdot,N)$ and $(\gg^*,\circ,S^*)$  is a Manin triple $((\gg \oplus \gg^*,\mathfrak{B}), \gg, \gg^*)$ of Frobenius perm algebra associated to $(\gg,\cdot)$ and $(\gg^*,\circ)$ such that $(\gg\oplus \gg^*, N+S^*, \mathfrak{B})$ is an NF perm algebra. We denote it by $((\gg\oplus \gg^*, N+S^*, \mathfrak{B}), (\gg,N), (\gg^*,S^*))$.
 \end{defi}

 \begin{lem}\label{lem:wb} Let $((\gg\oplus \gg^*, N+S^*, \mathfrak{B}), (\gg,N), (\gg^*,S^*))$ be a Manin triple of NF perm algebra associated to $(\gg, N)$ and $(\gg^*, S^*)$.
 \begin{enumerate}[(1)]
 \item \label{it:11} The adjoint $\widehat{N+S^*}$  of $N+S^*$ with respect to $\mathfrak{B}$ is $S+N^*$. Further $S+N^*$ is admissible to $(\gg\oplus \gg^*,N+S^*)$.
 \item \label{it:13} $S$ is admissible to $(\gg,N)$.
 \item \label{it:15} $N^*$ is admissible to $(\gg^*,S^*)$.
 \end{enumerate}
 \end{lem}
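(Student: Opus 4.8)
The plan is to prove part (\ref{it:11}) by a direct adjoint computation combined with Proposition \ref{pro:kl}, and then to deduce parts (\ref{it:13}) and (\ref{it:15}) by restricting the resulting admissibility identity to the two subalgebras $\gg$ and $\gg^*$. First I would compute $\widehat{N+S^*}$ explicitly from the pairing in Eq.(\ref{eq:lpo}). For $x,y\in\gg$ and $a,b\in\gg^*$, using the defining relations $\langle N(x),b\rangle=\langle x,N^*(b)\rangle$ and $\langle y,S^*(a)\rangle=\langle S(y),a\rangle$, one obtains
\begin{align*}
\mathfrak{B}\big((N+S^*)(x+a),\,y+b\big)
&=\langle N(x),b\rangle-\langle y,S^*(a)\rangle
=\langle x,N^*(b)\rangle-\langle S(y),a\rangle\\
&=\mathfrak{B}\big(x+a,\,S(y)+N^*(b)\big)
=\mathfrak{B}\big(x+a,\,(S+N^*)(y+b)\big).
\end{align*}
The nondegeneracy of $\mathfrak{B}$ then forces $\widehat{N+S^*}=S+N^*$.

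Since a Manin triple of NF perm algebra is in particular a skew-symmetric NF perm algebra $(\gg\oplus\gg^*,N+S^*,\mathfrak{B})$, Proposition \ref{pro:kl} applies to it and shows (equivalently, via Corollary \ref{cor:mt}) that the adjoint of its Nijenhuis operator is admissible to $(\gg\oplus\gg^*,N+S^*)$; combined with the computation above this gives that $S+N^*$ is admissible, completing (\ref{it:11}). For (\ref{it:13}) and (\ref{it:15}), Corollary \ref{cor:mt} turns the admissibility of $S+N^*$ to $(\gg\oplus\gg^*,N+S^*)$ into Eqs.(\ref{eq:sfh}) and (\ref{eq:sh}) written for the pair $(N+S^*,\,S+N^*)$ and the product of $\gg\oplus\gg^*$. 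The key observation is that $\gg$ and $\gg^*$ are subalgebras of $\gg\oplus\gg^*$ on which the two operators restrict cleanly: $N+S^*$ acts as $N$ on $\gg$ and as $S^*$ on $\gg^*$, while $S+N^*$ acts as $S$ on $\gg$ and as $N^*$ on $\gg^*$, and each subalgebra is invariant under both operators. Restricting the two identities to $x,y\in\gg$, every term remains inside $\gg$ and they collapse exactly to Eqs.(\ref{eq:sfh}) and (\ref{eq:sh}) for $S$ on $(\gg,N)$, so Corollary \ref{cor:mt} yields (\ref{it:13}); restricting instead to $a,b\in\gg^*$, the same identities become Eqs.(\ref{eq:sfh}) and (\ref{eq:sh}) for $N^*$ on $(\gg^*,S^*)$, which yields (\ref{it:15}).

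The computations are routine, and the only points that need care are the bookkeeping in the adjoint identity, i.e. transposing $N^*$ and $S^*$ across the correct slots of $\mathfrak{B}$, and the verification that upon restriction every product and every operator application stays inside the chosen subalgebra so that no cross terms of $\gg\oplus\gg^*$ survive. The latter is guaranteed by $\gg$ and $\gg^*$ being subalgebras stable under $N,S$ and under $S^*,N^*$ respectively, so I do not expect any genuine obstacle beyond this bookkeeping.
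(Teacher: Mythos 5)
Your argument is correct: the adjoint computation from Eq.(\ref{eq:lpo}) giving $\widehat{N+S^*}=S+N^*$, the appeal to Proposition \ref{pro:kl} (which, via Corollary \ref{cor:mt}, shows the adjoint of the Nijenhuis operator of a skew-symmetric NF perm algebra is admissible), and the restriction of the admissibility identities to the invariant subalgebras $\gg$ and $\gg^*$ are exactly the intended steps. The paper itself only says the proof is ``similar to [MLo, Lemma 4.6]'', so your write-up supplies the details of essentially the same argument.
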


 \begin{proof} It is similar to \cite[Lemma 4.6]{MLo}. \end{proof}

 \begin{thm}\label{thm:hu} Let $(\gg,\cdot,N)$ be a Nijenhuis perm algebra. Suppose that there is a Nijenhuis perm algebra structure $(\gg^*,\circ,S^*)$ on its dual space $\gg^*$. Then there is a Manin triple $((\gg\oplus \gg^*, N+S^*, \mathfrak{B}), (\gg,N), (\gg^*,S^*))$ of NF perm algebra associated to $(\gg,\cdot,N)$ and $(\gg^*,\circ,S^*)$ if and only if $((\gg,N),(\gg^*,S^*),{R_\cdot}^*-{L_\cdot}^*,{R_\cdot}^*,{R_\circ}^*-{L_\circ}^*,{R_\circ}^*)$  is a matched pair of Nijenhuis perm algebras.
 \end{thm}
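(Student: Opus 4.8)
The plan is to leverage the corresponding statement at the level of plain perm algebras, namely Lemma~\ref{lem:io}, and then to superimpose the Nijenhuis data using the tools already assembled. The key translation I will use repeatedly is that, by Lemma~\ref{lem:HO} and Definition~\ref{de:mrt}, the assertion ``$(\gg^*, {R_\cdot}^*-{L_\cdot}^*, {R_\cdot}^*, S^*)$ is a representation of $(\gg,N)$'' is \emph{equivalent} to ``$S$ is admissible to $(\gg,N)$'', and likewise ``$(\gg, {R_\circ}^*-{L_\circ}^*, {R_\circ}^*, N)$ is a representation of $(\gg^*,S^*)$'' is equivalent to ``$N^*$ is admissible to $(\gg^*,S^*)$'' (here one uses the canonical identification $\gg^{**}\cong\gg$, under which the operator $\alpha=N$ on $\gg$ corresponds to $\beta^*$ with $\beta=N^*$). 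Thus, by Definition~\ref{de:mv}, the right-hand condition of the theorem unpacks into three pieces: the underlying sextuple is a matched pair of \emph{plain} perm algebras, $S$ is admissible to $(\gg,N)$, and $N^*$ is admissible to $(\gg^*,S^*)$.

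For the ``if'' direction, I first invoke Lemma~\ref{lem:io}: the plain matched pair $(\gg,\gg^*,{R_\cdot}^*-{L_\cdot}^*,{R_\cdot}^*,{R_\circ}^*-{L_\circ}^*,{R_\circ}^*)$ yields a Manin triple of Frobenius perm algebra $((\gg\oplus\gg^*,\mathfrak{B}),\gg,\gg^*)$, with $\mathfrak{B}$ the standard skew-symmetric form of Eq.~(\ref{eq:lpo}). Next, since by hypothesis the data forms a matched pair of Nijenhuis perm algebras, Theorem~\ref{thm:gp} shows that $(\gg\oplus\gg^*, N+S^*)$, with $N+S^*=N_{\gg+\gg^*}$ as in Eq.~(\ref{eq:cgy}), is a Nijenhuis perm algebra. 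Combining the two, $(\gg\oplus\gg^*, N+S^*, \mathfrak{B})$ is an NF perm algebra in the sense of Definition~\ref{de:cl}, which is exactly a Manin triple of NF perm algebra associated to $(\gg,\cdot,N)$ and $(\gg^*,\circ,S^*)$ per Definition~\ref{de:fu}.

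For the ``only if'' direction, I start again from the plain level: the underlying Manin triple of Frobenius perm algebra gives, by Lemma~\ref{lem:io}, that the sextuple is a matched pair of plain perm algebras. It remains to recover the two admissibility (equivalently, Nijenhuis-representation) conditions. Since the ambient form $\mathfrak{B}$ of Eq.~(\ref{eq:lpo}) is skew-symmetric, $(\gg\oplus\gg^*, N+S^*, \mathfrak{B})$ is a skew-symmetric NF perm algebra; Lemma~\ref{lem:wb}(\ref{it:11}) identifies its adjoint $\widehat{N+S^*}=S+N^*$ and shows $S+N^*$ is admissible to $(\gg\oplus\gg^*,N+S^*)$, and Lemma~\ref{lem:wb}(\ref{it:13}),(\ref{it:15}) then split this into the admissibility of $S$ on $(\gg,N)$ and of $N^*$ on $(\gg^*,S^*)$. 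Translating back through Lemma~\ref{lem:HO} as above, these are precisely the two missing representation conditions, so by Definition~\ref{de:mv} the sextuple is a matched pair of Nijenhuis perm algebras.

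The routine content (the perm-algebra Manin-triple/matched-pair equivalence and the Nijenhuis semidirect-product computation) is already absorbed into Lemmas~\ref{lem:io}, \ref{lem:wb} and Theorem~\ref{thm:gp}, so the real work is the careful bookkeeping in the dictionary between admissibility and Nijenhuis representations. The main obstacle I anticipate is making the dual identifications consistent: one must check that the Nijenhuis operator $N$ placed on the $\gg$-component, when viewed as $\alpha$ for a representation of $(\gg^*,S^*)$ on $\gg$, corresponds under $\gg^{**}\cong\gg$ to admissibility of $N^*$ (not $N$) on $(\gg^*,S^*)$, and symmetrically that $S^*$ on the $\gg^*$-component matches admissibility of $S$ on $(\gg,N)$. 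Getting these transposes to line up with the conventions of Eqs.~(\ref{eq:fb})--(\ref{eq:fs}) and of Lemma~\ref{lem:wb} is where care is required; everything else follows formally.
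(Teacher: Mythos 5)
Your proposal is correct and follows essentially the same route as the paper's proof: Lemma~\ref{lem:io} handles the underlying Frobenius/matched-pair equivalence, Theorem~\ref{thm:gp} supplies the Nijenhuis operator $N+S^*$ on $\gg\oplus\gg^*$ for the ``if'' direction, and Lemma~\ref{lem:wb} extracts the two admissibility conditions for the ``only if'' direction. Your extra care in spelling out, via Lemma~\ref{lem:HO}, the dictionary between admissibility and Nijenhuis-representation conditions is a point the paper leaves implicit, but the argument is the same.
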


 \begin{proof} $(\Longrightarrow)$  The given Manin triple $((\gg\oplus \gg^*, N+S^*, \mathfrak{B}), (\gg,N), (\gg^*,S^*))$ associated to $(\gg,N)$ and $(\gg^*,S^*)$ implies that $((\gg\oplus \gg^*, \mathfrak{B}), \gg, \gg^*)$ is a Manin triple of Frobenius perm algebra associated to $\gg$ and $\gg^*$. Hence by Lemma \ref{lem:io}, $(\gg,\gg^*,{R_\cdot}^*-{L_\cdot}^*,{R_\cdot}^*,{R_\circ}^*-{L_\circ}^*,{R_\circ}^*)$ is a matched pair of perm algebras. Further, by Lemma \ref{lem:wb}, $(\gg^*,{R_\cdot}^*-{L_\cdot}^*,{R_\cdot}^*,S^*)$ is a representation of $(\gg, N)$ and $(\gg,{R_\circ}^*-{L_\circ}^*,{R_\circ}^*,N)$ is a representation of $(\gg^*, S^*)$. Hence, $(\gg,\gg^*,N,S^*,{R_\cdot}^*-{L_\cdot}^*,{R_\cdot}^*,{R_\circ}^*-{L_\circ}^*,{R_\circ}^*)$  is a matched pair of Nijenhuis perm algebras.

 $(\Longleftarrow)$ Let $(\gg,\gg^*,N,S^*,{R_\cdot}^*-{L_\cdot}^*,{R_\cdot}^*,{R_\circ}^*-{L_\circ}^*,{R_\circ}^*)$  be a matched pair of Nijenhuis perm algebras. Then $(\gg,\gg^*,{R_\cdot}^*-{L_\cdot}^*,{R_\cdot}^*,{R_\circ}^*-{L_\circ}^*,{R_\circ}^*)$  is a matched pair of perm algebras. By Lemma \ref{lem:io}, $(\gg \oplus \gg^*,\mathfrak{B})$ is a Frobenius perm algebra. By Theorem \ref{thm:gp}, the matched pair of Nijenhuis perm algebras also equips the perm algebra $\gg\oplus \gg^*$ with the Nijenhuis operator $N+S^*$, giving us an NF perm algebra.
 \end{proof}

 \subsubsection{Nijenhuis perm bialgebras}
 \begin{defi}
  A {\bf \N perm coalgebra} is a triple $(\gg, \D, S)$, where $(\gg, \D)$ is a perm coalgebra and $S$ is a \N operator on $(\gg, \D)$, i.e., the equality below holds:
 \begin{eqnarray}\mlabel{eq:gd}
 S(x_{(1)})\o S(x_{(2)})+S^2(x)_{(1)}\o S^2(x)_{(2)}=S(S(x)_{(1)})\o S(x)_{(2)}+S(x)_{(1)}\o S(S(x)_{(2)}),~~\forall~x\in \gg.
 \end{eqnarray}
 \end{defi}

 \begin{defi}\label{de:yy} A Nijenhuis perm bialgebra is a vector space $\gg$ together with linear maps $\cdot: \gg\o \gg\lr \gg$, $\D : \gg\lr \gg\o \gg$, $N, S: \gg\lr \gg$ such that
 \begin{enumerate}[(1)]
 \item $(\gg,\cdot,\D)$ is a perm bialgebra.
 \item $(\gg,\cdot,N)$ is a Nijenhuis perm algebra.
 \item $(\gg,\D,S)$ is a Nijenhuis perm coalgebra.
 \item $S$ is admissible to $(\gg,\cdot,N)$, that is, Eq.(\ref{eq:sfh}) and Eq.(\ref{eq:sh}) hold.
 \item $N^*$ is admissible to $(\gg^*,\D^*,S^*)$, that is, the equations below hold:
 \begin{eqnarray}
 &(S \o \id)\D N+(\id \o N^2)\D-(S\o N)\D -(\id \o N)\D N=0,&\label{eq:jy}\\
 &(\id \o S)\D N+(N^2 \o \id)\D-(N\o S)\D -(N \o \id)\D N=0.&\label{eq:jj}
 \end{eqnarray}
 \end{enumerate}
 We denote the Nijenhuis perm bialgebra by $((\gg,\cdot,N),\D,S)$.
 \end{defi}

 \begin{ex}\label{ex:ng} Let $\gg$ be a perm algebra with basis $\mathcal{B}=\{e_1,e_2\}$ and the product is defined by
 \begin{center}
        \begin{tabular}{r|rr}
          $\cdot$ & $e_1$  & $e_2$   \\
          \hline
           $e_1$ & $e_1$  & $0$ \\
           $e_2$ & $e_2$  & $0$  \\
        \end{tabular}.
        \end{center}
        Let $k_i$, $i=1,2,3,4$ be parameters, define
        $$\left\{
            \begin{array}{l}
             N(e_1)=k_1 e_1+k_2 e_2\\
             N(e_2)=k_3 e_2\\
            \end{array}
            \right., $$
     then $(\gg,\cdot,N)$ is a Nijenhuis perm algebra.
     
     Set
        $$\left\{
            \begin{array}{l}
            \D(e_1)=-e_2\o e_2\\
             \D(e_2)=0\\
            \end{array}
            \right.$$
        and
      $$\left\{
            \begin{array}{l}
             S(e_1)=k_3 e_1+k_4 e_2\\
             S(e_2)=k_3 e_2\\
            \end{array}
            \right., $$
      then $(\gg,\D,S)$ is a Nijenhuis perm coalgebra. Furthermore, $((\gg,\cdot,N),\D,S)$ is a Nijenhuis perm bialgebra.
 \end{ex}

 By \cite[Theorem 4.11]{Hou}, we have
 \begin{lem} \label{lem:jn} Let $(\gg,\cdot)$ be a perm algebra. Suppose that there is a perm algebra structure $\circ$ on its dual space $\gg^*$. Then the triple $(\gg,\cdot,\D)$ is a perm bialgebra if and only if $(\gg,\gg^*,{R_\cdot}^*-{L_\cdot}^*,{R_\cdot}^*,{R_\circ}^*-{L_\circ}^*,{R_\circ}^*)$ is a matched pair of perm algebras.
 \end{lem}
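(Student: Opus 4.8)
The statement is \cite[Theorem 4.11]{Hou}; I indicate how I would argue it. The plan is to turn the whole equivalence into a term-by-term dualization between the coproduct $\D$ and the product $\circ$ on $\gg^*$, and then to recognize the perm bialgebra axioms as the matched pair axioms of Proposition \ref{pro:gio}.

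First I would use Proposition \ref{pro:de:cl} to move freely between the coalgebra structure $\D$ on $\gg$ and the perm algebra structure $\circ=\D^*$ on $\gg^*$, the two being linked by $\langle \xi\circ\eta, z\rangle=\langle \xi\o\eta, \D(z)\rangle$ for all $\xi,\eta\in\gg^*$ and $z\in\gg$. Next I would observe that the four representation maps in the claimed matched pair are not ad hoc: the pair $({R_\cdot}^*-{L_\cdot}^*,\,{R_\cdot}^*)$ is precisely the dual of the regular representation $(L,R)$ of $(\gg,\cdot)$, and $({R_\circ}^*-{L_\circ}^*,\,{R_\circ}^*)$ is the dual of the regular representation $(L_\circ,R_\circ)$ of $(\gg^*,\circ)$. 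By the dual-representation construction (cf. \cite[Lemma 4.2]{Hou}) these are genuine representations of $\gg$ on $\gg^*$ and of $\gg^*$ on $\gg$, so all representation data of the matched pair are automatically well-defined once the two perm algebra structures are fixed.

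It then remains to show that the perm bialgebra compatibility conditions in Eq.(\ref{eq:hh}) are equivalent to the four matched pair conditions of Proposition \ref{pro:gio}. I would fix $x,y\in\gg$ and $\xi,\eta\in\gg^*$, pair each matched pair identity against a suitable test tensor, and rewrite every action through the defining adjointness relations, e.g. $\langle {L_\cdot}^*(x)\xi, z\rangle=\langle\xi, x\cdot z\rangle$ and $\langle \xi\,{R_\cdot}^*(x), z\rangle=\langle\xi, z\cdot x\rangle$ together with their $\circ$-analogues. Under this dictionary each matched pair relation transcribes into an identity in $\gg\o\gg$ built from $L(x)\o\id$, $\id\o R(y)$ and the flip $\tau$ applied to $\D$; assembling them reproduces exactly the three displayed lines of Eq.(\ref{eq:hh}). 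Pairing Eq.(\ref{eq:hh}) with $\xi\o\eta$ and reading the scalar identities back through the same dictionary gives the converse.

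The difficulty is organizational rather than conceptual. The $\gg$-action is the \emph{difference} ${R_\cdot}^*-{L_\cdot}^*$ rather than ${R_\cdot}^*$ alone, and it is precisely this difference that generates the subtracted and $\tau$-flipped terms of Eq.(\ref{eq:hh}); moreover several matched pair conditions are chains of equalities, so one must carefully separate the symmetric constraints (encoding the relation $x(yz)=x(zy)$ on each factor) from the genuine module constraints and verify that each yields one line of the compatibility with the correct placement of $L$, $R$ and $\tau$. Alternatively, since a perm bialgebra is the same as a Manin triple of Frobenius perm algebra on $\gg\oplus\gg^*$ via the standard bialgebra--Manin triple dictionary, one may instead apply Lemma \ref{lem:io} directly and bypass part of this computation.
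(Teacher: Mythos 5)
The paper offers no proof of its own here---it simply invokes \cite[Theorem 4.11]{Hou}---and your sketch is exactly the standard dualization argument underlying that citation: identify $\circ$ with $\D^*$, note that $({R_\cdot}^*-{L_\cdot}^*,{R_\cdot}^*)$ and $({R_\circ}^*-{L_\circ}^*,{R_\circ}^*)$ are the duals of the two regular representations, and pair each matched-pair identity of Proposition \ref{pro:gio} against test tensors to recover the compatibility conditions \eqref{eq:hh}; that outline is correct. One caution: the shortcut you suggest at the end, via Lemma \ref{lem:io} and the ``standard bialgebra--Manin triple dictionary,'' is circular in the logic of this paper, because the equivalence between perm bialgebras and Manin triples of Frobenius perm algebras is itself established by showing that each is equivalent to the matched-pair condition (Lemma \ref{lem:io} on one side and the present lemma on the other), so you cannot assume it to prove this lemma without an independent argument.
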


 \begin{thm}\label{thm:hup} Let $(\gg, \cdot, N)$ be a Nijenhuis perm algebra. Suppose that there is a Nijenhuis perm algebra structure $(\gg^*,\circ,S^*)$ on its dual space. Then the triple $((\gg,\cdot,N),\D,S)$ is a Nijenhuis perm bialgebra if and only if $(\gg,\gg^*,N,S^*,{R_\cdot}^*-{L_\cdot}^*,{R_\cdot}^*,{R_\circ}^*-{L_\circ}^*,{R_\circ}^*)$  is a matched pair of Nijenhuis perm algebras.
 \end{thm}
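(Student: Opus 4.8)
The plan is to check that, under the two standing hypotheses that $(\gg,\cdot,N)$ and $(\gg^*,\circ,S^*)$ are Nijenhuis perm algebras, the five defining conditions of a Nijenhuis perm bialgebra in Definition \ref{de:yy} match up, one block at a time, with the three defining ingredients of a matched pair of Nijenhuis perm algebras in Definition \ref{de:mv} for the hexatuple $(\gg,\gg^*,N,S^*,{R_\cdot}^*-{L_\cdot}^*,{R_\cdot}^*,{R_\circ}^*-{L_\circ}^*,{R_\circ}^*)$. Thus I would first record that condition (2) of Definition \ref{de:yy} and the assumption on $(\gg^*,\circ,S^*)$ are simply the hypotheses, and that condition (3) --- that $(\gg,\D,S)$ is a Nijenhuis perm coalgebra, i.e. Eq. \eqref{eq:gd} --- is nothing but the transpose of Eq. \eqref{eq:ggg} for $(\gg^*,\circ,S^*)$ under the pairing $\circ=\D^*$ and $S^{**}=S$, hence also automatic from the hypothesis. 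This disposes of conditions (2) and (3).

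Next I would handle the purely (co)algebraic compatibility. By Lemma \ref{lem:jn}, condition (1) --- that $(\gg,\cdot,\D)$ is a perm bialgebra --- holds if and only if $(\gg,\gg^*,{R_\cdot}^*-{L_\cdot}^*,{R_\cdot}^*,{R_\circ}^*-{L_\circ}^*,{R_\circ}^*)$ is a matched pair of perm algebras, which is exactly the third ingredient of Definition \ref{de:mv}. In particular this already guarantees that the underlying triples $(\gg^*,{R_\cdot}^*-{L_\cdot}^*,{R_\cdot}^*)$ and $(\gg,{R_\circ}^*-{L_\circ}^*,{R_\circ}^*)$ are representations of the perm algebras $(\gg,\cdot)$ and $(\gg^*,\circ)$ respectively, so the only extra content carried by the two Nijenhuis representations demanded in Definition \ref{de:mv} is the $\alpha$-compatibility of $S^*$ and of $N$.

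It then remains to identify those two $\alpha$-compatibilities with conditions (4) and (5). Applying Lemma \ref{lem:HO} (equivalently Corollary \ref{cor:mt}) with $(V,\ell,r)=(\gg,L,R)$ and $\beta=S$, the quadruple $(\gg^*,{R_\cdot}^*-{L_\cdot}^*,{R_\cdot}^*,S^*)$ is a representation of $(\gg,N)$ precisely when Eqs. \eqref{eq:fb} and \eqref{eq:fs} hold, which in the adjoint case are Eqs. \eqref{eq:sfh} and \eqref{eq:sh}, i.e. condition (4). Running the same argument with the roles of $\gg$ and $\gg^*$ interchanged --- now with $(\gg^*,\circ,S^*)$ as the ambient Nijenhuis perm algebra and $\beta=N^*$, so that $\beta^*=N$ acts on $\gg^{**}=\gg$ --- shows that $(\gg,{R_\circ}^*-{L_\circ}^*,{R_\circ}^*,N)$ is a representation of $(\gg^*,S^*)$ if and only if $N^*$ is admissible to $(\gg^*,\circ,S^*)$, and dualizing the resulting analogues of Eqs. \eqref{eq:sfh}--\eqref{eq:sh} yields exactly Eqs. \eqref{eq:jy} and \eqref{eq:jj}, i.e. condition (5).

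Assembling the three blocks then gives the stated equivalence in both directions. The step I expect to require the most care is the bookkeeping of the dualization in the last paragraph: one must verify that transposing the admissibility identities for $N^*$ on $(\gg^*,\circ,S^*)$ reproduces Eqs. \eqref{eq:jy}--\eqref{eq:jj} with the correct placement of $N$, $N^2$, $S$ and the flip, and that the ``representation of a Nijenhuis perm algebra'' hypotheses in Definition \ref{de:mv} genuinely split as (matched pair of perm algebras)$\,+\,$($\alpha$-compatibility) with no hidden overlap, so that conditions (4) and (5) are indeed the only new constraints beyond Lemma \ref{lem:jn}.
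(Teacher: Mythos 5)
Your proposal is correct and follows essentially the same route as the paper's own proof: unpack Definition \ref{de:yy}, use Lemma \ref{lem:jn} to match the perm bialgebra condition with the matched pair of perm algebras, and use Lemma \ref{lem:HO} (equivalently the admissibility conditions) to identify conditions (4) and (5) with the two Nijenhuis representation requirements of Definition \ref{de:mv}. Your treatment is somewhat more explicit than the paper's — in particular in noting that conditions (2) and (3) are automatic from the hypotheses and in flagging the dualization bookkeeping — but the logical skeleton is identical.
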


 \begin{proof} $(\Longrightarrow)$ Let $(\gg,\cdot,N,\D,S)$ be a Nijenhuis perm bialgebra, by Definition \ref{de:yy}, we know that $(\gg,\cdot,\D)$ is a perm bialgebra and $S$, $N^*$ are admissible to $(\gg, N)$ and $(\gg^*, S^*)$ respectively. These mean that $(\gg^*,{R_\cdot}^*-{L_\cdot}^*,{R_\cdot}^*,S^*)$ is a representation of $(\gg, N)$ and $(\gg,{R_\circ}^*-{L_\circ}^*,{R_\circ}^*,N)$ is a representation of $(\gg^*, S^*)$. By Lemma \ref{lem:jn}, $(\gg,\gg^*,{R_\cdot}^*-{L_\cdot}^*,{R_\cdot}^*,{R_\circ}^*-{L_\circ}^*,{R_\circ}^*)$ is a matched pair of perm algebras. Therefore, $(\gg,\gg^*,N,S^*,{R_\cdot}^*-{L_\cdot}^*,{R_\cdot}^*,{R_\circ}^*-{L_\circ}^*,{R_\circ}^*)$  is a matched pair of Nijenhuis perm algebras.

 $(\Longleftarrow)$ Let $(\gg,\gg^*,N,S^*,{R_\cdot}^*-{L_\cdot}^*,{R_\cdot}^*,{R_\circ}^*-{L_\circ}^*,{R_\circ}^*)$  be a matched pair of Nijenhuis perm algebras. Then $(\gg,\gg^*,{R_\cdot}^*-{L_\cdot}^*,{R_\cdot}^*,{R_\circ}^*-{L_\circ}^*,{R_\circ}^*)$ is a matched pair of perm algebras and $S$, $N^*$ are admissible to $(\gg, N)$ and $(\gg^*, S^*)$ respectively. Hence, By Lemma \ref{lem:jn} and Definition \ref{de:yy}, $((\gg,\cdot,N),\D,S)$ is a Nijenhuis perm bialgebra.
 \end{proof}

 \begin{thm}\label{thm:pog} Let $(\gg,\cdot,N)$ and $(\gg^*,\circ:=\D^*,S^*)$ be two Nijenhuis perm algebras. Then the following conditions are equivalent:
 \begin{enumerate}[(1)]
 \item $(\gg,\gg^*,N,S^*,{R_\cdot}^*-{L_\cdot}^*,{R_\cdot}^*,{R_\circ}^*-{L_\circ}^*,{R_\circ}^*)$ is a matched pair of Nijenhuis perm algebras $(\gg,\cdot,N)$ and $(\gg^*,\circ,S^*)$.
 \item There is a Manin triple of NF perm algebra associated to $(\gg,\cdot,N)$ and $(\gg^*,\circ,S^*)$.
 \item $((\gg,\cdot,N),\D,S)$ is a Nijenhuis perm bialgebra.
 \end{enumerate}
 \end{thm}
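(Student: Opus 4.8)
The plan is to obtain this three-way equivalence directly from the two characterizations already proved, since both nontrivial implications are in hand. First I would point out that condition (1), namely that
$$
(\gg,\gg^*,N,S^*,{R_\cdot}^*-{L_\cdot}^*,{R_\cdot}^*,{R_\circ}^*-{L_\circ}^*,{R_\circ}^*)
$$
is a matched pair of Nijenhuis perm algebras, is verbatim the same hypothesis that appears on the right-hand side of the biconditionals in both Theorem \ref{thm:hu} and Theorem \ref{thm:hup}. Consequently the whole statement is assembled by using condition (1) as a common hinge, and no fresh computation is required beyond citing the two preceding theorems.

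The first step is to invoke Theorem \ref{thm:hu}: there is a Manin triple $((\gg\oplus\gg^*,N+S^*,\mathfrak{B}),(\gg,N),(\gg^*,S^*))$ of NF perm algebra associated to $(\gg,\cdot,N)$ and $(\gg^*,\circ,S^*)$ if and only if the matched pair condition of (1) holds. This yields the equivalence $(1)\Leftrightarrow(2)$. The second step is to invoke Theorem \ref{thm:hup}: the triple $((\gg,\cdot,N),\D,S)$ is a Nijenhuis perm bialgebra if and only if the same matched pair condition of (1) holds. This yields $(1)\Leftrightarrow(3)$. Chaining the two biconditionals through (1) then gives $(1)\Leftrightarrow(2)\Leftrightarrow(3)$, which completes the argument.

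The one point that requires care — and the only modest obstacle — is consistency of bookkeeping for the dual structure: one must ensure that the perm algebra $\circ$ on $\gg^*$ entering Theorem \ref{thm:hu} is exactly the dual product $\circ=\D^*$ featuring in the bialgebra of Theorem \ref{thm:hup}, as recorded in the hypotheses of the present theorem. Once this identification is made, condition (1) is genuinely the single matched-pair condition shared by both earlier results, and the proof reduces to the two citations above.
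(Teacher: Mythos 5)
Your proof is correct and is exactly the paper's argument: the paper also derives the three-way equivalence by citing Theorem \ref{thm:hu} for $(1)\Leftrightarrow(2)$ and Theorem \ref{thm:hup} for $(1)\Leftrightarrow(3)$, chaining through the common matched-pair condition. Your added remark about identifying $\circ$ with $\D^*$ is a reasonable bookkeeping check but introduces no new content.
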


 \begin{proof} It is obvious by Theorem \ref{thm:hu} and Theorem \ref{thm:hup}.
 \end{proof}

 \subsection{Coboundary Nijenhuis perm bialgebra}
 Let $(\gg, \cdot)$ be a perm algebra. For $r\in \gg\o \gg$, in \cite{Hou} $\D_r$ is defined by
 \begin{equation}\mlabel{eq:cop}
 \D(x):=\D_r(x):=xr^{1}\o r^{2}+r^{1}\o xr^{2}-r^{1}\o r^{2}x, \quad \forall x\in \gg.
 \end{equation}
 \begin{defi}\label{de:bh} A Nijenhuis perm bialgebra $((\gg,\cdot,N),\D,S)$ is called {\bf coboundary} if there exists a $r \in \gg \o \gg$ such that Eq.(\ref{eq:cop}) holds.
 \end{defi}

 In \cite{Hou}, the author provided the following results. Let $(\gg,\cdot)$ be a perm algebra, $r \in \gg\o \gg$ and $\D: \gg\lr \gg \o \gg$ be the linear map defined by Eq.(\ref{eq:cop}). Then $\D$ makes $(\gg,\D)$ into a perm coalgebra such that $(\gg,\cdot,\D)$ is a perm bialgebra if and only if for all $x,y\in \gg$
 \begin{eqnarray}
 &&(L(xy)\o \id+\id\o L(xy))(r-\tau(r))=(L(yx)\o \id+\id\o L(yx))(r-\tau(r)),\label{eq:ji}\\
 &&(R(y)\o L(x)+L(y) \o R(x)+\id \o L(yx))(r-\tau(r))\nonumber\\
 &&=(R(y)\o R(x)+\id \o R(yx)+\id \o L(x y))(r-\tau(r)),\label{eq:jp}\\
 &&(L(x) \o L(y))(r-\tau(r))=0,\label{eq:jd}\\
 &&(R(y)\o L(x)+L(y)\o R(x)+\id \o L(yx))(r-\tau(r))\nonumber\\
 &&=(R(y)\o R(x)+L(y)\o L(x)+\id\o L(xy))(r-\tau(r)),\label{eq:jm}\\
 &&(\id\o \id\o(L(x)-R(x)))P(r)=(L(x)\o \id\o \id)J(r),\label{eq:hu}\\
 &&(\id\o \id\o(L(x)-R(x)))Q(r)=(L(x)\o \id\o \id)T(r)+M(r),\label{eq:hs}
 \end{eqnarray}
 where
 \begin{eqnarray*}
 &&P(r)=r_{13}r_{12}-r_{13}r_{23}+r_{23}r_{12}-r_{12}r_{23},\\
 &&Q(r)=r_{13}r_{12}-r_{13}r_{32}+r_{23}r_{12}-r_{12}r_{23},\\
 &&J(r)=r_{12}r_{23}+r_{13}r_{23}-r_{12}r_{13}-r_{23}r_{13},\\
 &&T(r)=r_{12}r_{23}+r_{13}r_{32}-r_{13}r_{12}-r_{32}r_{12},\\
 &&M(r)=(\id\o L(x)\o \id)(r_{32}r_{12}-r_{23}r_{12})+(\id\o R(x)\o \id)(r_{12}r_{23}-r_{12}r_{32}),
 \end{eqnarray*}
 $r_{13}r_{12}=r^1\bar{r}^1\o \bar{r}^2\o r^2$ and similarly for others.

 Let $(\gg,N)$ be an $S$-admissible Nijenhuis perm algebra. In order for $((\gg,\cdot,N),\D,S)$ to be a Nijenhuis perm bialgebra, we only need to further require that $(\gg^*,{\D}^*,S^*)$ is an $N^*$-admissible Nijenhuis perm algebra, that is, $(\mathfrak{\gg},\D,S)$ is a Nijenhuis perm coalgebra and Eq.(\ref{eq:jy}) and (\ref{eq:jj}) hold.

 \begin{pro}\label{pro:nh} Let $(\gg,N)$ be an $S$-admissible Nijenhuis perm algebra and $r\in \gg \o \gg$. Define a linear map $\D: \gg\lr \gg \o \gg$  by Eq.(\ref{eq:cop}). Then the following conclusions hold.
 \begin{enumerate}[(1)]
 \item \label{it:7} Eq.(\ref{eq:gd}) holds if and only if for all $x\in \gg$,
 \begin{eqnarray}\label{eq:mg}
 &&(\id \o S\circ L(x)-\id \o S\circ R(x)-\id\o L(S(x))+\id\o R(S(x)))(S\o \id -\id \o N)(r)\nonumber\\
 &&+(L(S(x))\o \id-S\circ L(x)\o \id)(N\o \id-\id \o S)(r)=0.
 \end{eqnarray}
 \item \label{it:44} Eq.(\ref{eq:jj}) holds if and only if for all $x\in \gg$,
 \begin{eqnarray}\label{eq:jk}
 &&(\id \o L(N(x))+L(N(x))\o \id +\id\o S\circ L(x) -\id \o S\circ R(x)-\id\o R(N(x))-N\circ L(x)\o \id)\nonumber\\
 &&(\id\o S-N\o \id)(r)+(\id\o R(x)-\id\o L(x))(\id\o S^2- N^2\o \id )(r)=0.
 \end{eqnarray}
 \item \label{it:65} Eq.(\ref{eq:jy}) holds if and only if for all $x\in \gg$,
 \begin{eqnarray}\label{eq:jkw}
 &&(\id \o L(N(x))+L(N(x))\o \id +S\circ L(x)\o \id -\id\o R(N(x))-\id \o N(L(x))+\id \o N(R(x))) \nonumber\\
 &&(S\o \id -\id\o N)(r)+(L(x)\o \id)(\id\o N^2-S^2\o \id )(r)=0.
 \end{eqnarray}
 \end{enumerate}
 \end{pro}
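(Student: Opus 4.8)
The plan is to establish all three equivalences by substituting $\D=\D_r$ from Eq.(\ref{eq:cop}) directly into the left-hand equations and comparing, term by term, with the proposed right-hand equations; the only external inputs will be the Nijenhuis relation (\ref{eq:ggg}) for $N$ and the two admissibility relations (\ref{eq:sfh}) and (\ref{eq:sh}) for $S$. First I would record the three monomials $\D(x)=xr^1\o r^2+r^1\o xr^2-r^1\o r^2 x$ and compute $\D(N(x))$ together with the images under $N,S,N^2,S^2$ on each tensor leg, turning every side of Eqs.(\ref{eq:gd}), (\ref{eq:jy}), (\ref{eq:jj}) into an explicit sum of monomials in $r^1,r^2$. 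I would do the same for the target Eqs.(\ref{eq:mg}), (\ref{eq:jkw}), (\ref{eq:jk}). The organizing idea is then to sort each expansion into groups by the \emph{shape} of its two tensor legs --- typically $(\,\cdot\,)\o S(r^2)$, $S(r^1)\o(\,\cdot\,)$, $(\,\cdot\,)\o r^2$, $r^1\o(\,\cdot\,)$ and their $N^2(r^1)$-analogues --- and to verify the claimed equality one shape at a time.

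For part (1), the groups $(\,\cdot\,)\o S(r^2)$ and $S(r^1)\o(\,\cdot\,)$ already coincide with the corresponding pieces of Eq.(\ref{eq:mg}) with no identity needed. The group $(\,\cdot\,)\o r^2$ reduces to the target by a single use of Eq.(\ref{eq:sh}) with $y=r^1$, and the group $r^1\o(\,\cdot\,)$ splits into two subsums, one annihilated by Eq.(\ref{eq:sh}) with $y=r^2$ and the other by Eq.(\ref{eq:sfh}) with $x$ and $r^2$ interchanged. Hence the difference of the two sides of Eq.(\ref{eq:gd}) equals exactly the left-hand side of Eq.(\ref{eq:mg}), giving the equivalence.

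Part (2) runs along the same lines for Eq.(\ref{eq:jj}). The distinctive feature is a group $(\,\cdot\,)\o r^2$ whose coefficient is $N^2(xr^1)-N(N(x)r^1)+N(x)N(r^1)-N(xN(r^1))$; this vanishes precisely by the Nijenhuis identity (\ref{eq:ggg}) evaluated at $y=r^1$. The mixed group $r^1\o(\,\cdot\,)$ is reduced to Eq.(\ref{eq:jk}) by one application each of Eqs.(\ref{eq:sfh}) and (\ref{eq:sh}) after the appropriate relabeling, while the groups $(\,\cdot\,)\o S(r^2)$, $N(r^1)\o(\,\cdot\,)$, $(\,\cdot\,)\o xr^2$ and $(\,\cdot\,)\o r^2 x$ match the target outright. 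Part (3), relating Eq.(\ref{eq:jy}) to Eq.(\ref{eq:jkw}), is handled by the entirely analogous computation, obtained by exchanging the two tensor legs and swapping the roles of $L$ and $R$ throughout; the same three identities close it.

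The genuinely delicate step --- the main obstacle --- is the bookkeeping rather than any single idea. Each of $\D(x)$ and $\D(N(x))$ contributes three monomials, so after composing with $N,S,N^2,S^2$ one handles on the order of a dozen terms per side, and the whole argument hinges on invoking each of Eqs.(\ref{eq:sfh}), (\ref{eq:sh}), (\ref{eq:ggg}) with its variables in exactly the right order, several times with $x$ and the dummy factor swapped. The perm axioms (\ref{eq:ft}) are available to rewrite any stray product, but once the expansions are sorted by tensor shape they turn out not to be needed.
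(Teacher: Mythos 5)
Your proposal is correct and follows essentially the same route as the paper: expand each side of Eqs.(\ref{eq:gd}), (\ref{eq:jj}), (\ref{eq:jy}) using $\D=\D_r$, then rewrite the nested terms $S(S(x)r^i)$, $S(r^iS(x))$ and $N(N(x)r^i)$ via Eqs.(\ref{eq:sh}), (\ref{eq:sfh}) and (\ref{eq:ggg}) respectively, and match the result against Eqs.(\ref{eq:mg}), (\ref{eq:jk}), (\ref{eq:jkw}) term by term. The identities are invoked at exactly the places the paper invokes them, so there is nothing to add.
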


 \begin{proof} \ref{it:7} For all $x\in \gg$, by Eq.(\ref{eq:cop}), we have
 \begin{eqnarray*}
 &&(S \o S)\D(x)= S(x r^1)\o S(r^2)+S(r^1)\o S(a r^2)-S(r^1)\o S(r^2 a),\\
 &&\D(S^2(x))=S^2(x)r^1\o r^2+r^1\o S^2(x)r^2-r^1\o r^2 S^2(x),\\
 &&(S\o\id)\D(S(x))=S(S(x)r^1)\o r^2+S(r^1) \o S(x)r^2-S(r^1) \o r^2 S(x)\\
 &&\hspace{27mm}\stackrel {(\ref{eq:sh})}{=}S(x N(r^1)) \o r^2+S^2(x)r^1\o r^2-S(x)N(r^1)\o r^2+S(r^1) \o S(x)r^2\\
 &&\hspace{32mm}-S(r^1)\o r^2 S(x),\\
 &&(\id\o S)\D(S(x))=S(x)r^1\o S(r^2)+r^1\o S(S(x)r^2)-r^1\o S(r^2 S(x))\\
 &&\hspace{25mm}\stackrel {(\ref{eq:sh})(\ref{eq:sfh})}{=}S(x)r^1\o S(r^2)+r^1\o S(x N(r^2))+r^1\o S^2(x)r^2-r^1\o S(x)N(r^2)\\
 &&\hspace{35mm}-r^1\o S(N(r^2)x)-r^1\o r^2 S^2(x)+r^1\o N(r^2)S(x).
 \end{eqnarray*}
 Then Eq.(\ref{eq:gd}) holds if and only if Eq.(\ref{eq:mg}) holds.
 
 \ref{it:44} and \ref{it:65}: Similar to the proof of \ref{it:7}. 
 \end{proof}

 \begin{lem}\label{lem:pq}
 \begin{enumerate}[(1)]
 \item If $(S\o \id-\id\o N)(r)=0$, then $(\id\o N^2- S^2\o \id)(r)=0$.
 \item If $(N\o \id-\id\o S)(r)=0$, then $(N^2\o \id-\id\o S^2)(r)=0$.
 \item If $r$ is symmetric, then $(S\o \id-\id\o N)(r)=0$ if and only if $(N\o \id-\id\o S)(r)=0$.
 \end{enumerate}
 \end{lem}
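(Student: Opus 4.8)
The plan is to treat all three parts as short operator computations on $r\in\gg\o\gg$, exploiting only two structural facts: that operators acting in the first and second tensor slots commute (so $(A\o\id)(\id\o B)=A\o B=(\id\o B)(A\o\id)$), and that the flip satisfies the intertwining relation $\tau\circ(A\o B)=(B\o A)\circ\tau$ for all $A,B\in End(\gg)$. No Sweedler expansion of $r$ is really needed; the argument is cleanest phrased entirely in terms of composing $S\o\id$, $\id\o N$ and $\tau$.

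For part (1), I would start from the hypothesis $(S\o\id)(r)=(\id\o N)(r)$ and apply it twice to manufacture the common term $(S\o N)(r)$. Applying $\id\o N$ to both sides gives $(S\o N)(r)=(\id\o N^2)(r)$, while applying $S\o\id$ to the same hypothesis gives $(S^2\o\id)(r)=(S\o N)(r)$. Chaining these two identities through their shared middle term $(S\o N)(r)$ yields $(S^2\o\id)(r)=(\id\o N^2)(r)$, which is exactly $(\id\o N^2-S^2\o\id)(r)=0$. The only thing to note is that $S\o\id$ and $\id\o N$ act on disjoint slots, so both orders of composition produce the same operator $S\o N$.

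Part (2) is the mirror image of part (1): starting from $(N\o\id)(r)=(\id\o S)(r)$, applying $\id\o S$ and then $N\o\id$ respectively produces the common term $(N\o S)(r)$, and chaining gives $(N^2\o\id)(r)=(\id\o S^2)(r)$, i.e.\ $(N^2\o\id-\id\o S^2)(r)=0$. I would simply remark that the computation is word-for-word that of part (1) after interchanging the two tensor factors and swapping the roles of $N$ and $S$.

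For part (3), the tool is the intertwining relation for $\tau$ together with the symmetry hypothesis $\tau(r)=r$. Applying $\tau$ to $(S\o\id)(r)=(\id\o N)(r)$ converts the left-hand side into $(\id\o S)(\tau(r))=(\id\o S)(r)$ and the right-hand side into $(N\o\id)(\tau(r))=(N\o\id)(r)$, giving $(N\o\id-\id\o S)(r)=0$; the reverse implication follows by applying $\tau$ in the opposite direction, again using $\tau(r)=r$. None of the three parts presents a genuine obstacle: the entire content is bookkeeping of which operator occupies which tensor slot, and the only place demanding mild care is tracking the flip correctly in part (3).
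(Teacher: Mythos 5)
Your proof is correct, and it is exactly the direct verification the paper intends (its proof of this lemma is simply ``It can be proved directly''): parts (1) and (2) follow by applying $S\o\id$ and $\id\o N$ (resp.\ $N\o\id$ and $\id\o S$) to the hypothesis and chaining through the common term $(S\o N)(r)$ (resp.\ $(N\o S)(r)$), and part (3) follows by applying $\tau$ and using $\tau\circ(A\o B)=(B\o A)\circ\tau$ together with $\tau(r)=r$.
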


 \begin{proof}
 It can be proved directly.
 \end{proof}

 \begin{thm}\label{thm:ggg} Let $(\gg,N)$ be an $S$-admissible Nijenhuis perm algebra and $r\in \gg \o \gg$. Define a linear map $\D$ by Eq.(\ref{eq:cop}). Then $((\gg,\cdot,N),\D,S)$ is a Nijenhuis perm bialgebra if and only if Eqs.(\ref{eq:ji})-(\ref{eq:jkw}) hold.
 \end{thm}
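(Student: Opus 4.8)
The plan is to treat this as an assembly theorem: the statement simply collects, into a single list of scalar/tensor identities, the five defining conditions of a Nijenhuis perm bialgebra from Definition \ref{de:yy}, each of which has already been given an equivalent characterization earlier in the section. So I would begin by writing out those five conditions explicitly and then dispose of them one at a time, matching each to its previously established equivalent form under the standing hypothesis that $\D=\D_r$ is the coboundary map of Eq.(\ref{eq:cop}) and that $(\gg,N)$ is $S$-admissible.

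First I would observe that conditions (2) and (4) of Definition \ref{de:yy} are free: the hypothesis that $(\gg,N)$ is an $S$-admissible Nijenhuis perm algebra says exactly that $(\gg,\cdot,N)$ is a Nijenhuis perm algebra and that $S$ satisfies Eqs.(\ref{eq:sfh}) and (\ref{eq:sh}). Next I would handle condition (1), that $(\gg,\cdot,\D)$ is a perm bialgebra: since $\D$ is defined by Eq.(\ref{eq:cop}), this is precisely the situation covered by Hou's characterization recalled immediately before Proposition \ref{pro:nh}, so condition (1) holds if and only if Eqs.(\ref{eq:ji})--(\ref{eq:hs}) hold. I would emphasize here that Hou's statement already includes that $(\gg,\D)$ is a perm coalgebra.

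I would then turn to condition (3), that $(\gg,\D,S)$ is a Nijenhuis perm coalgebra. The point to record is that the perm-coalgebra half of this requirement is subsumed by condition (1), so condition (3) reduces to the single Nijenhuis-coalgebra identity Eq.(\ref{eq:gd}); by Proposition \ref{pro:nh}(1) this is equivalent to Eq.(\ref{eq:mg}). Finally, condition (5) asserts that $N^*$ is admissible to $(\gg^*,\D^*,S^*)$, meaning that Eqs.(\ref{eq:jy}) and (\ref{eq:jj}) both hold; by Proposition \ref{pro:nh}(3) and (2) these are equivalent to Eqs.(\ref{eq:jkw}) and (\ref{eq:jk}), respectively. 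Collecting the four equivalences, all five conditions of Definition \ref{de:yy} hold simultaneously if and only if Eqs.(\ref{eq:ji})--(\ref{eq:hs}) together with Eqs.(\ref{eq:mg}), (\ref{eq:jk}) and (\ref{eq:jkw}) hold, i.e. exactly when Eqs.(\ref{eq:ji})--(\ref{eq:jkw}) hold.

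Since all the genuine computation has already been carried out in Proposition \ref{pro:nh} and in Hou's theorem, there is no analytic obstacle; the step demanding the most care is purely the bookkeeping in condition (5), where one must pair Eq.(\ref{eq:jj}) with Proposition \ref{pro:nh}(2) (yielding Eq.(\ref{eq:jk})) and Eq.(\ref{eq:jy}) with Proposition \ref{pro:nh}(3) (yielding Eq.(\ref{eq:jkw})) without transposing them, and the subtlety in condition (3), namely recognizing that its coalgebra clause needs no separate argument because it is already contained in condition (1). Verifying that the index range ``(\ref{eq:ji})--(\ref{eq:jkw})'' indeed exhausts exactly the union $\{(\ref{eq:ji}),\dots,(\ref{eq:hs})\}\cup\{(\ref{eq:mg}),(\ref{eq:jk}),(\ref{eq:jkw})\}$ and omits nothing is the last thing I would check.
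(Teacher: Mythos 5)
Your proposal is correct and follows essentially the same route as the paper: the paper's (very terse) proof likewise reduces the statement to Hou's characterization of coboundary perm bialgebras for Eqs.(\ref{eq:ji})--(\ref{eq:hs}) and to Proposition \ref{pro:nh} for Eqs.(\ref{eq:mg}), (\ref{eq:jk}), (\ref{eq:jkw}). Your version is in fact more explicit than the paper's about which clause of Definition \ref{de:yy} each block of equations accounts for, and your pairing of Eq.(\ref{eq:jj}) with Proposition \ref{pro:nh}(2) and Eq.(\ref{eq:jy}) with Proposition \ref{pro:nh}(3) is the right one.
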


 \begin{proof} $((\gg,\cdot,N),\D,S)$ is a Nijenhuis perm bialgebra if and only if $(\gg^*,\D^*,S^*)$ is an $N^*$-admissible Nijenhuis perm algebra. By Proposition \ref{pro:nh}, the latter holds if and only if Eqs.(\ref{eq:ji})-(\ref{eq:jkw}) hold.
 \end{proof}

 \begin{cor}\label{cor:gm} Let $(\gg,N)$ be an $S$-admissible Nijenhuis perm algebra and $r \in \gg\o \gg$ be symmetric. Define a linear map $\D: \gg\lr \gg\o \gg$ by Eq.(\ref{eq:cop}) . Then $((\gg,\cdot,N),\D,S)$ is a Nijenhuis perm bialgebra if the following equations hold:
 \begin{eqnarray}
 &P(r)=r_{13}r_{12}-r_{13}r_{23}+r_{23}r_{12}-r_{12}r_{23}=0,&\label{eq:uj}\\
 &(S\o \id-\id\o N)(r)=0.&\label{eq:cg}
 \end{eqnarray}
 \end{cor}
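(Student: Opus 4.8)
The plan is to reduce everything to Theorem \ref{thm:ggg}, by which $((\gg,\cdot,N),\D,S)$ is a Nijenhuis perm bialgebra precisely when Eqs.(\ref{eq:ji})--(\ref{eq:jkw}) all hold; so I would verify these under the two standing hypotheses, that $r$ is symmetric and that $P(r)=0$. I would sort the nine conditions into three groups according to which hypothesis controls them.

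First, the four conditions Eqs.(\ref{eq:ji}), (\ref{eq:jp}), (\ref{eq:jd}) and (\ref{eq:jm}) are each obtained by applying a linear operator to $r-\tau(r)$. Since $r$ is symmetric we have $r-\tau(r)=0$, so both sides of all four equations vanish identically and this group is free. Second, for the coassociativity-type conditions Eqs.(\ref{eq:hu}) and (\ref{eq:hs}) I would exploit that symmetry of $r$ gives $r_{23}=r_{32}$ as elements of $\gg\o\gg\o\gg$. Substituting this into the defining expressions yields at once $Q(r)=P(r)$ and $M(r)=0$; moreover a term-by-term cancellation in the defining sums gives the two tensor identities $T(r)=-P(r)$ and $J(r)=\big((\tau\o\id_\gg)-\id\big)P(r)$, the latter obtained by comparing $J(r)$ with the image of $P(r)$ under the transposition $\tau\o\id_\gg$ of the first two tensor legs and again invoking the symmetry of $r$. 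Hence $P(r)=0$ forces $Q(r)=J(r)=T(r)=0$ together with $M(r)=0$, so both sides of Eqs.(\ref{eq:hu}) and (\ref{eq:hs}) are zero.

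Third, the Nijenhuis-compatibility conditions Eqs.(\ref{eq:mg}), (\ref{eq:jk}) and (\ref{eq:jkw}) are each a sum of two summands, every summand carrying (up to sign) one of the tensors $(S\o\id-\id\o N)(r)$, $(N\o\id-\id\o S)(r)$, $(\id\o N^2-S^2\o\id)(r)$, $(N^2\o\id-\id\o S^2)(r)$ as a factor. The hypothesis Eq.(\ref{eq:cg}) is exactly $(S\o\id-\id\o N)(r)=0$; Lemma \ref{lem:pq}(3) then promotes it, through the symmetry of $r$, to $(N\o\id-\id\o S)(r)=0$, while Lemma \ref{lem:pq}(1) and (2) supply the two quadratic consequences $(\id\o N^2-S^2\o\id)(r)=0$ and $(N^2\o\id-\id\o S^2)(r)=0$. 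Thus every summand vanishes and all three equations hold, which completes the verification via Theorem \ref{thm:ggg}.

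The main obstacle is the tensor bookkeeping in the second group: establishing the permutation identity $J(r)=\big((\tau\o\id_\gg)-\id\big)P(r)$ and the identity $T(r)=-P(r)$ directly from the $r_{ij}$-calculus and the symmetry relation $r_{23}=r_{32}$, where one must track carefully which leg each factor of $r$ and $\bar r$ occupies. The first and third groups are then routine, and once these two identities are secured the hypothesis $P(r)=0$ collapses Eqs.(\ref{eq:hu})--(\ref{eq:hs}) in one stroke.
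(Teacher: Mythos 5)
Your proposal is correct and follows the route the paper intends: reduce to Theorem \ref{thm:ggg} and check that symmetry of $r$ kills Eqs.~(\ref{eq:ji})--(\ref{eq:jm}), that $P(r)=0$ forces Eqs.~(\ref{eq:hu})--(\ref{eq:hs}) (this is exactly the content of \cite[Corollary 4.15]{Hou}, which the paper invokes rather than reproving), and that Lemma \ref{lem:pq} disposes of Eqs.~(\ref{eq:mg}), (\ref{eq:jk}), (\ref{eq:jkw}). The only difference is that you verify the tensor identities $Q(r)=P(r)$, $M(r)=0$, $T(r)=-P(r)$ and $J(r)=\bigl((\tau\o\id)-\id\bigr)P(r)$ by hand (all of which do check out under symmetry of $r$), whereas the paper simply cites Hou's result; your version is therefore more self-contained but otherwise the same argument.
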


 \begin{defi}\label{de:nj} Let $(\gg,N)$ be a Nijenhuis perm algebra, $S:\gg\lr \gg$ a linear map. Then Eq.(\ref{eq:uj}) together with Eq.(\ref{eq:cg}) is called an {\bf $S$-admissible perm Yang-Baxter equation in $(\gg, N)$}.
 \end{defi}

 \begin{pro}\label{pro:gth} Let $(\gg,N)$ be an $S$-admissible Nijenhuis perm algebra and $r \in \gg\o \gg$ be a symmetric solution of the $S$-admissible perm Yang-Baxter equation in $(\gg, N)$. Then $((\gg,\cdot,N),\D,S)$ is a Nijenhuis perm bialgebra, where the linear map $\D=\D_r$ is defined by Eq.(\ref{eq:cop}). We call this Nijenhuis perm bialgebra {\bf quasitriangular}.
 \end{pro}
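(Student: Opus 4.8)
The plan is to observe that, once the terminology of Definition \ref{de:nj} is unwound, Proposition \ref{pro:gth} is nothing but a restatement of Corollary \ref{cor:gm}. Indeed, by Definition \ref{de:nj} the hypothesis that $r$ is a symmetric solution of the $S$-admissible perm Yang-Baxter equation in $(\gg,N)$ means precisely that $r\in\gg\o\gg$ is symmetric and that Eq.(\ref{eq:uj}) and Eq.(\ref{eq:cg}) hold. Together with the standing assumption that $(\gg,N)$ is $S$-admissible, these are verbatim the hypotheses of Corollary \ref{cor:gm}, whose conclusion is exactly that $((\gg,\cdot,N),\D,S)$ with $\D=\D_r$ given by Eq.(\ref{eq:cop}) is a Nijenhuis perm bialgebra. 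So the single substantive step is to quote Corollary \ref{cor:gm}; the proposition is its reformulation plus the naming ``quasitriangular''.

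For transparency I would also record why the statement holds at the level of Theorem \ref{thm:ggg}, which reduces being a Nijenhuis perm bialgebra to the system Eqs.(\ref{eq:ji})--(\ref{eq:jkw}). The symmetry $r=\tau(r)$ gives $r-\tau(r)=0$, so the four perm-bialgebra conditions Eqs.(\ref{eq:ji}), (\ref{eq:jp}), (\ref{eq:jd}), (\ref{eq:jm}) collapse to $0=0$ automatically, since each is an equation between two maps applied to $r-\tau(r)$. For the two cubic perm-bialgebra conditions Eqs.(\ref{eq:hu}) and (\ref{eq:hs}) I would use the symmetry relations $r_{12}=r_{21}$, $r_{13}=r_{31}$, $r_{23}=r_{32}$ to rewrite $Q(r)$, $J(r)$, $T(r)$ and $M(r)$ in terms of $P(r)$ (e.g.\ one checks $Q(r)=P(r)$ and $T(r)=-P(r)$ for symmetric $r$), so that the hypothesis $P(r)=0$ from Eq.(\ref{eq:uj}) forces both sides of Eqs.(\ref{eq:hu}) and (\ref{eq:hs}) to vanish.

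The Nijenhuis part then splits off cleanly through Proposition \ref{pro:nh}. By its three parts, the coalgebra-Nijenhuis condition Eq.(\ref{eq:gd}) and the admissibility conditions Eq.(\ref{eq:jj}) and Eq.(\ref{eq:jy}) are equivalent respectively to Eq.(\ref{eq:mg}), Eq.(\ref{eq:jk}) and Eq.(\ref{eq:jkw}); and each of these last three is a fixed linear combination of the tensors $(S\o\id-\id\o N)(r)$, $(N\o\id-\id\o S)(r)$, $(\id\o N^2-S^2\o\id)(r)$ and $(\id\o S^2-N^2\o\id)(r)$. Now Eq.(\ref{eq:cg}) gives $(S\o\id-\id\o N)(r)=0$; since $r$ is symmetric, Lemma \ref{lem:pq}(3) yields $(N\o\id-\id\o S)(r)=0$, and Lemma \ref{lem:pq}(1),(2) upgrade these to the vanishing of the two quadratic tensors. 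Hence every term in Eqs.(\ref{eq:mg}), (\ref{eq:jk}), (\ref{eq:jkw}) vanishes, so all three hold.

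The only genuinely computational point — and the main obstacle if one insists on arguing from Theorem \ref{thm:ggg} rather than simply citing Corollary \ref{cor:gm} — is the symmetric-$r$ bookkeeping for the cubic expressions $P(r)$, $Q(r)$, $J(r)$, $T(r)$, $M(r)$ in Eqs.(\ref{eq:hu}) and (\ref{eq:hs}): one must track carefully which tensor leg each of the two copies of $r$ occupies under the various slot permutations before the identities $Q(r)=P(r)$, $T(r)=-P(r)$ (and the corresponding reductions of $J(r)$ and $M(r)$) become visible. Everything else in the verification is a direct substitution of Eq.(\ref{eq:cop}) followed by the admissibility identities Eqs.(\ref{eq:sfh})--(\ref{eq:sh}), already carried out in Proposition \ref{pro:nh}.
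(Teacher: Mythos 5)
Your proposal is correct and takes essentially the same route as the paper, whose entire proof is the one-line observation that the statement follows from Corollary~\ref{cor:gm} together with Definition~\ref{de:nj}. The further unwinding you give via Theorem~\ref{thm:ggg}, Proposition~\ref{pro:nh} and Lemma~\ref{lem:pq} is additional detail the paper does not record, but your core step is identical to the paper's.
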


 \begin{proof}
 It is obvious by Corollary \ref{cor:gm} and Definition \ref{de:nj}.
 \end{proof}

 For a vector space $\gg$, by isomorphism $\gg\o \gg\cong Hom(\gg^*, \gg)$ and $r\in V\o V$, define a linear map $ r^\sharp: V^*\lr V$ by
 \begin{eqnarray*}\label{eq:nm}
 r^\sharp(u^*)=\langle u^*,r^1\rangle r^2, \forall u^*\in V^*
 \end{eqnarray*}
 We say that $r\in V\o V$ is {\bf nondegenerate} if the linear map $r^\sharp$ is a bijection.

 \begin{thm}\label{thm:x} Let $(\gg,N)$ be a Nijenhuis perm algebra and $r\in \gg\o \gg$ be symmetric. Let $S: \gg\lr \gg$ be a linear map. Then $r$ is a solution of the $S$-admissible perm Yang-Baxter equation in $(\gg,N)$ if and only if $r^\sharp$ satisfies
 \begin{eqnarray}
 &r^\sharp(x^*)r^\sharp(y^*)=r^\sharp(y^* R^*(r^\sharp(x^*))-L^*(r^\sharp(x^*))y^*+x^* R^*(r^\sharp(y^*))),\forall x^*, y^*\in \gg^* &\label{eq:lq}\\
 &N\ci r^\sharp=r^\sharp\ci S^*.&\label{eq:lu}
 \end{eqnarray}
 \end{thm}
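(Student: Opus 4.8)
The theorem equates a tensor condition (the $S$-admissible perm Yang-Baxter equation, i.e.\ Eqs.\,(\ref{eq:uj}) and (\ref{eq:cg})) with two operator identities for $r^\sharp$. Since the equation splits into the $P(r)=0$ part and the compatibility part $(S\o\id-\id\o N)(r)=0$, the plan is to translate each piece separately under the isomorphism $\gg\o\gg\cong Hom(\gg^*,\gg)$, and show that $P(r)=0 \Leftrightarrow (\ref{eq:lq})$ while $(S\o\id-\id\o N)(r)=0 \Leftrightarrow (\ref{eq:lu})$. The symmetry hypothesis $\tau(r)=r$ should be used throughout to identify $r^\sharp$ with its transpose, which is what allows a two-sided tensor equation to collapse into a single map equation.

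First I would dispatch the easy half, the equivalence with (\ref{eq:lu}). Writing $r=r_1\o r_2$, one has $(S\o\id)(r)=S(r_1)\o r_2$ and $(\id\o N)(r)=r_1\o N(r_2)$. Pairing the first tensor slot against an arbitrary $x^*\in\gg^*$ and using the defining relation $r^\sharp(x^*)=\langle x^*,r_1\rangle r_2$ together with the adjoint action $\langle S^*(x^*),r_1\rangle=\langle x^*,S(r_1)\rangle$, the identity $(S\o\id-\id\o N)(r)=0$ becomes $\langle x^*,S(r_1)\rangle r_2 = \langle x^*,r_1\rangle N(r_2)$ for all $x^*$, i.e.\ $r^\sharp(S^*(x^*))=N(r^\sharp(x^*))$, which is exactly (\ref{eq:lu}). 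Here the symmetry of $r$ guarantees that contracting in the first versus the second slot gives the same map, so no ambiguity in the direction of $r^\sharp$ arises.

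The harder half is the equivalence $P(r)=0\Leftrightarrow(\ref{eq:lq})$. The strategy is to pair the three-fold tensor $P(r)=r_{13}r_{12}-r_{13}r_{23}+r_{23}r_{12}-r_{12}r_{23}$ against two covectors $x^*,y^*$ in two of the three legs and read off the image in the remaining leg as an element of $\gg$. Each term $r_{ij}r_{kl}$, when two slots are contracted against $x^*,y^*$, produces a product of the form $r^\sharp(\cdots)\,r^\sharp(\cdots)$ or $r^\sharp(\,\cdots R^*(r^\sharp(\cdot))\,)$ etc., where the operators $L^*,R^*$ enter precisely because contracting a slot that also carries a multiplication $\cdot$ turns that multiplication into the dual coregular action on $\gg^*$. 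Carefully tracking which leg is free and which multiplication survives in each of the four terms of $P(r)$ should reproduce the four summands on the right-hand side of (\ref{eq:lq}): $r^\sharp(x^*)r^\sharp(y^*)$ from one term and the three contributions $y^*R^*(r^\sharp(x^*))$, $-L^*(r^\sharp(x^*))y^*$, $x^*R^*(r^\sharp(y^*))$ from the others.

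\textbf{Main obstacle.} The bookkeeping in the last step is where errors are most likely: one must fix a convention for which tensor legs $x^*$ and $y^*$ contract, keep consistent notation for the two copies $r=r_1\o r_2=\bar r_1\o\bar r_2$ implicit in products like $r_{13}r_{12}=r_1\bar r_1\o\bar r_2\o r_2$, and correctly convert each surviving algebra multiplication into the appropriate $L^*$ or $R^*$ via the pairings $\langle u^* R^*(x),v\rangle=\langle u^*,vR(x)\rangle$ and $\langle L^*(x)u^*,v\rangle=\langle u^*,L(x)v\rangle$. I expect the symmetry $\tau(r)=r$ to be invoked repeatedly to reconcile, say, a $\bar r_1\o\bar r_2$ contraction with an $r_1\o r_2$ contraction and thereby to match signs and operator placements exactly with (\ref{eq:lq}). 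Once the four terms are matched legwise, nondegeneracy is not needed for this equivalence (it only matters for later inversion statements), so the proof concludes simply by noting that the three-fold tensor vanishes iff its pairing against all $x^*,y^*$ vanishes, i.e.\ iff (\ref{eq:lq}) holds.
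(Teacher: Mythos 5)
Your overall architecture and your treatment of the easy half coincide with the paper's: the paper also proves $(S\o \id-\id\o N)(r)=0\Leftrightarrow\eqref{eq:lu}$ by the same one-line contraction, computing $r^\sharp(S^*(x^*))=\langle x^*,S(r^1)\rangle r^2$ against $N(r^\sharp(x^*))=\langle x^*,r^1\rangle N(r^2)$. Where you genuinely diverge is the hard half: the paper does not prove $P(r)=0\Leftrightarrow\eqref{eq:lq}$ at all, but outsources it to a citation of \cite[Proposition 4.19]{Hou}, whereas you propose a direct verification by contracting two legs of $P(r)$ against $x^*\o y^*$ and reading off the remaining leg. That route is sound, and the term-by-term correspondence you predict is exactly right: contracting legs $2$ and $3$ and using symmetry in the form $\langle u^*,r^2\rangle r^1=r^\sharp(u^*)$, the four terms $r_{13}r_{12}-r_{13}r_{23}+r_{23}r_{12}-r_{12}r_{23}$ become
$r^\sharp(x^*)r^\sharp(y^*)-r^\sharp(x^*R^*(r^\sharp(y^*)))+r^\sharp(L^*(r^\sharp(x^*))y^*)-r^\sharp(y^*R^*(r^\sharp(x^*)))$,
whose vanishing for all $x^*,y^*$ is precisely \eqref{eq:lq} (and, as you note, nondegeneracy of $r$ is not needed, only that $\gg^*$ separates points). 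What your version buys is self-containedness; what it costs is that the ``bookkeeping'' you flag as the main obstacle is the entire mathematical content of that half, and you leave it unexecuted. As a finished proof you would need either to write out those four contractions explicitly (they do close up as claimed) or to do what the paper does and cite the known equivalence for the ordinary perm Yang--Baxter equation, reserving your own work for the new admissibility condition \eqref{eq:cg}.
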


 \begin{proof} By \cite[Proposition 4.19]{Hou}, $r$ is a solution of perm Yang-Baxter equation in $\gg$ if and only if Eqs.(\ref{eq:lq}) holds. Moreover, for any $x^* \in \gg^*$, we have
 \begin{eqnarray*}
 r^\sharp(S^*(x^*))\stackrel{}{=}\langle S^*(x^*),r^1\rangle r^2=\langle x^*,S(r^1)\rangle r^2,
 \end{eqnarray*}
 and
 \begin{eqnarray*}
 N(r^\sharp(x^*))\stackrel{}{=}\langle x^*,r^1\rangle N(r^2).
 \end{eqnarray*}
 Hence, Eq.(\ref{eq:lu}) holds if and only if Eq.(\ref{eq:cg}) holds. These finish the proof.
 \end{proof}

 \subsection{$\mathcal{O}$-operators on Nijenhuis perm algebra}
 \begin{defi}\label{de:hc} Let $(\gg,N)$ be a Nijenhuis perm algebra, $(V,\ell,r)$ be a representation of $\gg$ and $\a: V\lr V$ be a linear map. A linear map $T:V\lr \gg$ is called a {\bf weak $\mathcal{O}$-operator associated to $(V,\ell,r)$ and $\a$} if $T$ satisfies
 \begin{eqnarray}
 &T(x)T(y)=T(\ell(T(x))y+xr(T(y))), \forall x, y \in V,&\label{eq:lz}\\
 &N\circ T=T\circ \a.&\label{eq:lm}
 \end{eqnarray}
 If $(V,\ell,r,\a)$ is a representation of $(\gg,N)$, then $T$ is called an {\bf $\mathcal{O}$-operator associated to $(V,\ell,r,\a)$}.
 \end{defi}

 Theorem \ref{thm:x} can be rewritten in terms of $\mathcal{O}$-operators as follows.

 \begin{cor}\label{cor:bbo} Let $(\gg,N)$ be a Nijenhuis perm algebra, $r\in \gg\o \gg$ be symmetric and $S:\gg\lr \gg$ be a linear map. Then $r$ is a solution of the $S$-admissible perm Yang-Baxter equation in $(\gg,N)$ if and only if $r^\sharp$ is a weak $\mathcal{O}$-operator associated to $(\gg^*,R^*-L^*,R^*)$ and $S^*$. In addition, if $(\gg,N)$ is an $S$-admissible Nijenhuis perm algebra, then $r$ is a solution of the $S$-admissible perm Yang-Baxter equation in $(\gg,N)$ if and only if $r^\sharp$ is an $\mathcal{O}$-operator associated to $(\gg^*,R^*-L^*,R^*,S^*)$.
 \end{cor}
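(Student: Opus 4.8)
The plan is to read off this corollary directly from Theorem~\ref{thm:x}, recognizing the two conditions defining a (weak) $\mathcal{O}$-operator as a verbatim rewriting of Eqs.(\ref{eq:lq}) and (\ref{eq:lu}); no genuinely new computation should be required. First I would unpack Definition~\ref{de:hc} in the specialization $(V,\ell,r)=(\gg^*,R^*-L^*,R^*)$, $\a=S^*$ and $T=r^\sharp$. With these choices the defining equation (\ref{eq:lz}) of a weak $\mathcal{O}$-operator reads $r^\sharp(x^*)r^\sharp(y^*)=r^\sharp\bigl((R^*-L^*)(r^\sharp(x^*))y^*+x^*R^*(r^\sharp(y^*))\bigr)$, and expanding the left action as $(R^*-L^*)(r^\sharp(x^*))y^*=y^*R^*(r^\sharp(x^*))-L^*(r^\sharp(x^*))y^*$ shows that this coincides term for term with Eq.(\ref{eq:lq}). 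Likewise, condition (\ref{eq:lm}), namely $N\circ T=T\circ\a$, becomes $N\circ r^\sharp=r^\sharp\circ S^*$, which is precisely Eq.(\ref{eq:lu}). Thus being a weak $\mathcal{O}$-operator associated to $(\gg^*,R^*-L^*,R^*)$ and $S^*$ is exactly the pair of conditions characterizing a symmetric solution in Theorem~\ref{thm:x}.

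The first assertion then follows at once: by Theorem~\ref{thm:x}, $r$ solves the $S$-admissible perm Yang-Baxter equation in $(\gg,N)$ if and only if $r^\sharp$ satisfies Eqs.(\ref{eq:lq}) and (\ref{eq:lu}), and by the identification above this is equivalent to $r^\sharp$ being a weak $\mathcal{O}$-operator associated to $(\gg^*,R^*-L^*,R^*)$ and $S^*$. Note that here we only use that $(\gg,N)$ is a Nijenhuis perm algebra, which is why the output is merely a \emph{weak} $\mathcal{O}$-operator.

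For the second assertion I would invoke the admissibility hypothesis to upgrade the underlying representation. By Definition~\ref{de:mrt}, saying that $(\gg,N)$ is $S$-admissible means exactly that $S$ is admissible to $(\gg,N)$ on the adjoint representation $(\gg,L,R)$, i.e. that Eqs.(\ref{eq:fb}) and (\ref{eq:fs}) hold for $\beta=S$ with $(V,\ell,r)=(\gg,L,R)$. Hence Lemma~\ref{lem:HO} promotes $(\gg^*,R^*-L^*,R^*,S^*)$ from a representation of the perm algebra $\gg$ to a representation of the Nijenhuis perm algebra $(\gg,N)$. By the final clause of Definition~\ref{de:hc}, the weak $\mathcal{O}$-operator produced in part~(1) is therefore an $\mathcal{O}$-operator associated to $(\gg^*,R^*-L^*,R^*,S^*)$, and combining this with part~(1) yields the stated equivalence. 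I expect the only delicate point to be bookkeeping: one must respect the left/right action conventions so that the operator $(R^*-L^*)(r^\sharp(x^*))$ applied to $y^*$ reproduces the precise sign pattern $y^*R^*(r^\sharp(x^*))-L^*(r^\sharp(x^*))y^*$ of Eq.(\ref{eq:lq}). Beyond this routine matching of notation there is no real obstacle, since all the analytic content already lives in Theorem~\ref{thm:x} and Lemma~\ref{lem:HO}.
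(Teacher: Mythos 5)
Your proposal is correct and matches the paper's (implicit) argument: the paper offers no separate proof, presenting Corollary~\ref{cor:bbo} as a direct restatement of Theorem~\ref{thm:x} in the language of Definition~\ref{de:hc}, which is exactly the term-by-term identification of Eqs.~(\ref{eq:lq})--(\ref{eq:lu}) with Eqs.~(\ref{eq:lz})--(\ref{eq:lm}) that you carry out. Your use of Definition~\ref{de:mrt} together with Lemma~\ref{lem:HO} to upgrade $(\gg^*,R^*-L^*,R^*,S^*)$ to a representation of $(\gg,N)$ for the second assertion is likewise the intended reading.
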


 In what follows, we will prove that $\mathcal{O}$-operators can provide solutions of the admissible perm Yang-Baxter equation in semi-direct product Nijenhuis perm algebras and then produce Nijenhuis perm bialgebras.

 \begin{lem}\label{lem:ri} Let $\gg$ be a perm algebra, $(V,\ell,r)$ be a representation of $\gg$ and $T:V\lr \gg$ be a linear map which is identified as an element in $(\gg\ltimes_{r^*-\ell^*,r^*}V^*)\o (\gg\ltimes_{r^*-\ell^*,r^*}V^*)$ through $Hom(V,\gg)\cong\gg\o V^*\subseteq (\gg\ltimes_{r^*-\ell^*,r^*}V^*)\o (\gg\ltimes_{r^*-\ell^*,r^*}V^*)$. Then $r=T+\tau(T)$ is a symmetric solution of the perm Yang-Baxter equation in the semi-direct product perm algebra $\gg\ltimes_{r^*-\ell^*,r^*}V^*$ if and only if $T$ is an $\mathcal{O}$-operator of $(\gg,\cdot)$ associated to $(V,\ell,r)$.
 \end{lem}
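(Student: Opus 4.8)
Since $\tau\big(T+\tau(T)\big)=T+\tau(T)$, the element $r=T+\tau(T)$ is symmetric for free, so the whole content is the equivalence between $r$ solving the perm Yang-Baxter equation and $T$ being an $\mathcal{O}$-operator. For symmetric $r$ the perm Yang-Baxter equation reduces to $P(r)=r_{13}r_{12}-r_{13}r_{23}+r_{23}r_{12}-r_{12}r_{23}=0$ (Eq.\eqref{eq:uj}), all products being taken in $A:=\gg\ltimes_{r^*-\ell^*,r^*}V^*$. The plan is to write $T=\sum_i u_i\o a_i\in\gg\o V^*$ under $Hom(V,\gg)\cong\gg\o V^*$, so that $T(v)=\sum_i\langle a_i,v\rangle u_i$ and $r=\sum_i(u_i\o a_i+a_i\o u_i)$, then to expand $P(r)$ and compare its vanishing with the $\mathcal{O}$-operator identity \eqref{eq:lz}.

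The decisive structural feature is the multiplication of the semi-direct product (Proposition \ref{pro:B}, Eq.\eqref{eq:1w}): for $x,y\in\gg$ and $\phi,\psi\in V^*$,
\begin{eqnarray*}
(x+\phi)\ast(y+\psi)=x\cdot y+(r^*-\ell^*)(x)\psi+\phi r^*(y),
\end{eqnarray*}
so that $\gg\cdot\gg\subseteq\gg$, both $\gg\ast V^*$ and $V^*\ast\gg$ land in $V^*$, and crucially $V^*\ast V^*=0$. Substituting $r=\sum_i(u_i\o a_i+a_i\o u_i)$ into the four products $r_{13}r_{12}$, $r_{13}r_{23}$, $r_{23}r_{12}$, $r_{12}r_{23}$ and discarding, via $V^*\ast V^*=0$, every summand carrying two dual vectors in one slot, each surviving term has exactly one slot in $\gg$ and the other two in $V^*$. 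Hence $P(r)$ breaks into three graded pieces indexed by which of the three slots carries the $\gg$-factor.

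I would first treat the piece whose $\gg$-factor sits in slot one, pairing it with $\xi\o y\o x$ for $\xi\in\gg^*$ and $x,y\in V$ (recall $A^*=\gg^*\oplus V$). Using the dual-action relations $\langle\phi r^*(z),w\rangle=\langle\phi,w r(z)\rangle$ and $\langle(r^*-\ell^*)(z)\phi,w\rangle=\langle\phi,w r(z)-\ell(z)w\rangle$ for $z\in\gg$, $w\in V$ (from \cite[Lemma 4.2]{Hou}), and repeatedly resumming through $\sum_i\langle a_i,w\rangle u_i=T(w)$, the four contributions telescope to
\begin{eqnarray*}
\big\langle\xi,\,T(x)T(y)-T\big(\ell(T(x))y+x r(T(y))\big)\big\rangle.
\end{eqnarray*}
As $\xi,x,y$ range freely, this piece vanishes exactly when \eqref{eq:lz} holds.

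It then remains to show that the two pieces with the $\gg$-factor in slot two or slot three encode the same identity. Here I would use the symmetry $\tau(r)=r$ together with the representation axioms \eqref{eq:1} and \eqref{eq:2j} to reduce them to \eqref{eq:lz} as well, whence $P(r)=0$ is equivalent to $T$ being an $\mathcal{O}$-operator associated to $(V,\ell,r)$. The main obstacle is entirely organizational: the faithful expansion of $P(r)$ yields roughly a dozen nonzero summands that must be grouped by their $\gg$/$V^*$-type and resummed correctly, and one must keep the two dual actions $r^*-\ell^*$ and $r^*$ straight throughout. A more structural route that avoids most of this is to invoke the operator form of the perm Yang-Baxter equation (\cite[Proposition 4.19]{Hou}; cf.\ Theorem \ref{thm:x} and Eq.\eqref{eq:lq}) for the algebra $A$, observe that $r^\sharp=T\oplus T^*$ on $A^*=\gg^*\oplus V$, and check that the operator identity for $r^\sharp$ restricts on $V\subseteq A^*$ to precisely \eqref{eq:lz} for $T$, the complementary cases holding automatically.
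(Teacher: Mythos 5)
Your proposal is correct and follows essentially the same route as the paper: expand $r=\sum_i(T(e_i)\o e_i^*+e_i^*\o T(e_i))$, kill the $V^*\ast V^*$ products, group the surviving terms of $P(r)$ by which tensor slot carries the $\gg$-factor, and resum via $\sum_i\langle e_i^*,w\rangle T(e_i)=T(w)$ to recover $T(x)T(y)-T(\ell(T(x))y+xr(T(y)))$ in each piece. The only small discrepancy is your expectation that the slot-two and slot-three pieces need the symmetry of $r$ or the representation axioms \eqref{eq:1}--\eqref{eq:2j}: in the paper's computation each graded piece already collapses to the same $\mathcal{O}$-operator combination by the resummation alone, so no extra input is required.
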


 \begin{proof} Let $\{e_1,e_2,\cdots, e_n\}$ be a basis of $V$ and $\{e_1^*,e_2^*,\cdots, e_n^*\}$ be the dual basis. Then
 \begin{eqnarray*}
 r=\sum_{i=1}^n (T(e_i)\o e_i^*+e_i^*\o T(e_i)).
 \end{eqnarray*} 
 Then, we get
 \begin{eqnarray*}
 &&\hspace{-5mm}r_{13}r_{12}-r_{13}r_{23}+r_{23}r_{12}-r_{12}r_{23}\\
 &&=\sum_{1\leq i,j\leq n}((T(e_i)T(e_j)-T(e_ir(T(e_j)))-T(\ell(T(e_i))e_j))\o e_j^*\o e_i^*\\
 &&\hspace{5mm}-e_i^*\o (T(e_i)T(e_j)-T(e_ir(T(e_j)))-T(\ell(T(e_i))e_j))\o e_j^*\\
 &&\hspace{5mm}+e_j^*\o (T(e_i)T(e_j)-T(e_ir(T(e_j)))-T(\ell(T(e_i))e_j))\o e_i^*\\
 &&\hspace{5mm}-e_i^*\o e_j^*\o(T(e_i)T(e_j)-T(e_ir(T(e_j)))-T(\ell(T(e_i))e_j))).
 \end{eqnarray*}
 So $r$ is a solution of the perm Yang-Baxter equation in the semi-direct product perm
 algebra $\gg\ltimes_{r^*-\ell^*,r^*}V^*$ if and only if $T$ is an $\mathcal{O}$-operator associated to $(V,\ell,r)$.
  \end{proof}

 \begin{thm}\label{thm:fv} Let $(\gg,N)$ be a Nijenhuis perm algebra, $(V,\ell,r)$ be a representation of $(\gg,\cdot)$, $S : \gg\lr \gg$ and $\a, \beta:V\lr V$ be linear maps. Then the following conditions are equivalent.
 \begin{enumerate}[(1)]
 \item \label{it:e2} There is a Nijenhuis perm algebra $(\gg \ltimes_{\ell,r} V, N+\a)$ such that the linear map $S+\beta$ on $\gg\oplus V$ is admissible to $(\gg \ltimes_{\ell,r} V, N+\a)$.
 \item \label{it:e3} There is a Nijenhuis perm algebra $(\gg \ltimes_{r^*-\ell^*,r^*} V^*, N+\beta^*)$ such that the linear map $S+\a^*$ on $\gg\oplus V^*$ is admissible to $(\gg \ltimes_{r^*-\ell^*,r^*} V^*, N+\beta^*)$.
 \item \label{it:e1} The following conditions are satisfied:
 \begin{enumerate}[(a)]
 \item \label{it:a1} $(V,\ell,r,\a)$ is a representation of $(\gg, N)$;
 \item \label{it:a2} $S$ is admissible to $(\gg, N)$;
 \item \label{it:a3} $\beta$ is admissible to $(\gg,  N)$ on $(V, \ell,r)$;
 \item \label{it:a4} For all $x\in \gg$ and $u\in V$, the following equation holds:
 \begin{eqnarray}
 &&\beta(\ell(x)\a(u))+\ell(S^2(x))u=\ell(S(x))\a(u)+\beta(\ell(S(x))u),\label{eq:hjl} \\
 &&\beta(\a(u)r(x))+ur(S^2(x))=\a(u)r(S(x))+\beta(ur(S(x))).\label{eq:hg}
 \end{eqnarray}
 \end{enumerate}
 \end{enumerate}
 \end{thm}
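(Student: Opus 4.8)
The plan is to establish the two equivalences $(1)\Leftrightarrow(3)$ and $(2)\Leftrightarrow(3)$, obtaining the second by applying the first to the dual representation. For $(1)\Leftrightarrow(3)$, I would first invoke Proposition \ref{pro:B}: the pair $(\gg\ltimes_{\ell,r}V,N+\a)$ is a Nijenhuis perm algebra if and only if $(V,\ell,r,\a)$ is a representation of $(\gg,N)$, which is condition \ref{it:a1}. Granting this, $(\gg\ltimes_{\ell,r}V,N+\a)$ is a genuine Nijenhuis perm algebra, so Corollary \ref{cor:mt} applies to it: the map $S+\beta$ is admissible to it if and only if the analogues of (\ref{eq:sfh}) and (\ref{eq:sh})---with $(S,N,\cdot)$ replaced by $(S+\beta,N+\a,\ast)$---hold. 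I would then substitute $X=x+u$ and $Y=y+v$ (with $x,y\in\gg$, $u,v\in V$) using the operation (\ref{eq:1w}) and the componentwise maps $N+\a$ and $S+\beta$, and project onto the $\gg$- and $V$-summands. The $\gg$-parts reproduce exactly (\ref{eq:sfh}) and (\ref{eq:sh}) for $S$, i.e.\ condition \ref{it:a2}. In the $V$-parts, the terms depending only on $(x,v)$ give (\ref{eq:fb}) and (\ref{eq:hjl}), while the terms depending only on $(u,y)$ give (\ref{eq:hg}) and (\ref{eq:fs}); by Definition \ref{de:mrt} these four identities are precisely conditions \ref{it:a3} and \ref{it:a4}. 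This yields $(1)\Leftrightarrow(3)$.

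For $(2)\Leftrightarrow(3)$, I would apply the equivalence just proved to the dual representation $(V^*,r^*-\ell^*,r^*)$ of $\gg$ (a representation by Lemma \ref{lem:HO}), with $\beta^*$ in the role of $\a$ and $\a^*$ in the role of $\beta$. Under this substitution, statement $(1)$ becomes verbatim statement $(2)$, while statement $(3)$ becomes the conjunction of: (a$'$) $(V^*,r^*-\ell^*,r^*,\beta^*)$ is a representation of $(\gg,N)$; (b$'$) $S$ is admissible to $(\gg,N)$; (c$'$) $\a^*$ is admissible to $(\gg,N)$ on $(V^*,r^*-\ell^*,r^*)$; and (d$'$) the analogues of (\ref{eq:hjl}) and (\ref{eq:hg}) for $\beta^*,\a^*$ on $V^*$. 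It then remains to match these with \ref{it:a1}--\ref{it:a4}: here (b$'$) is literally \ref{it:a2}; Lemma \ref{lem:HO} gives (a$'$)$\Leftrightarrow$\ref{it:a3}; and the same lemma, together with the canonical identifications $V^{**}\cong V$, $\a^{**}=\a$ and the fact that the iterated dual of $(V^*,r^*-\ell^*,r^*)$ recovers $(V,\ell,r)$, gives (c$'$)$\Leftrightarrow$\ref{it:a1}. Finally (d$'$)$\Leftrightarrow$\ref{it:a4} is obtained by pairing the dual identities against an arbitrary $v\in V$ and using $\langle\ell^*(x)u^*,v\rangle=\langle u^*,\ell(x)v\rangle$, $\langle u^*r^*(x),v\rangle=\langle u^*,vr(x)\rangle$ and $\langle\a^*(u^*),v\rangle=\langle u^*,\a(v)\rangle$; for instance the $r^*$-identity transposes exactly into (\ref{eq:hg}).

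The computational core is the component-by-component expansion in $(1)\Leftrightarrow(3)$, and the point demanding the most care is the bookkeeping that separates the mixed $(x,v)$- and $(u,y)$-terms so that each of (\ref{eq:fb}), (\ref{eq:fs}), (\ref{eq:hjl}), (\ref{eq:hg}) is produced exactly once. In the dual step the analogous delicate issue is checking that the composite left action $r^*-\ell^*$ recombines correctly, so that (d$'$) reproduces (\ref{eq:hjl}) rather than a variant of it.
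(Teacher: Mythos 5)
Your proposal is correct and follows essentially the same route as the paper: Proposition \ref{pro:B} reduces the Nijenhuis condition on the semidirect product to \ref{it:a1}, the admissibility identities for $S+\beta$ are expanded componentwise and split (by specializing $u=v=0$, $u=y=0$, $x=v=0$) into \ref{it:a2}, \ref{it:a3}, \ref{it:a4}, and $(2)\Leftrightarrow(3)$ is obtained by substituting the dual data $(V^*,r^*-\ell^*,r^*,\beta^*,\a^*)$ into the already-proved equivalence and transposing. Your explicit matching of the swapped conditions (a$'$)$\Leftrightarrow$\ref{it:a3} and (c$'$)$\Leftrightarrow$\ref{it:a1} via Lemma \ref{lem:HO} and the double dual is a point the paper leaves implicit, but it is the same argument.
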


 \begin{proof} $\ref{it:e2}\Longleftrightarrow\ref{it:e1}$ By Proposition \ref{pro:B}, we have $(\gg \ltimes_{\ell,r} V, N+\a)$ is a Nijenhuis perm algebra if and only if $(V,\ell,r,\a)$ is a representation of $(\gg,N)$. Let $x,y\in \gg$ and $u,v\in V$, then we have
 \begin{eqnarray*}
 &&(S+\beta)((N+\a)(x+u))(y+v))=S(N(x)y)+\beta(\ell(N(x))v+\a(u)r(y)),\\
 &&(x+u)(S+\beta)^2(y+v)=xS^2(y)+\ell(x)\beta^2(v)+ur(S^2(y)),\\
 &&((N+\a)(x+u))((S+\beta)(y+v))=N(x)S(y)+\ell(N(x))\beta(v)+\a(u)r(S(y)),\\
 &&(S+\beta)((x+u)((S+\beta)(y+v)))=S(xS(y))+\beta(\ell(x)\beta(v)+ur(S(y))).
 \end{eqnarray*}
 We obtain that
 \begin{eqnarray}\label{eq:cbv}
 &&S(N(x)y)+\beta(\ell(N(x))v)+\beta(\a(u)r(y))+xS^2(y)+\ell(x)\beta^2(v)+ur(S^2(y))-N(x)S(y)\nonumber \\
 &&-\ell(N(x))\beta(v)-\a(u)r(S(y))-S(xS(y))-\beta(\ell(x)\beta(v))-\beta(ur(S(y)))=0.
 \end{eqnarray}
 Then, Eq.(\ref{eq:cbv}) holds if and only if Eq.(\ref{eq:sfh}) (corresponding $u=v=0$), Eq.(\ref{eq:fb}) (corresponding $u=y=0$) and Eq.(\ref{eq:hg}) (corresponding $x=v=0$) hold. Similarly, Eqs.(\ref{eq:fs}), (\ref{eq:sh}) and (\ref{eq:hjl}) hold. Hence, \ref{it:e2} holds if and only if \ref{it:e1} holds.

 $\ref{it:e3}\Longleftrightarrow\ref{it:e1}$ In \ref{it:e2}, take
 \begin{eqnarray*}
 V=V^*, \rho=\rho^*, \ell=r^*-\ell^*, r=r^*, \a=\beta^*, \beta=\a^*.
 \end{eqnarray*}
 Then from the above equivalence between \ref{it:e2} and \ref{it:e1}, we have \ref{it:e3} holds if and only if \ref{it:a1}-\ref{it:a3} hold and for all $x\in \gg, v^*\in V^*$, the following equation holds:
 \begin{eqnarray}
 &&\a^*(\beta^*(v^*)r^*(x))+v^*r^*(S^2(x))-\beta^*(v^*)r^*(S(x))-\a^*(v^*r^*(S(x)))=0, \label{eq:cse}\\
 &&\a^*((r^*-\ell^*)(x)\beta^*(v^*))+(r^*-\ell^*)(S^2(x))v^*-(r^*-\ell^*)(S(x))\beta^*(v^*)\nonumber\\
 &&\hspace{67mm}-\a^*((r^*-\ell^*)(S(x))v^*)=0.\label{eq:pp}
 \end{eqnarray}
 For all $x \in \gg$, $u\in V$ and $v^* \in V^*$, 
 one can get that
 \begin{eqnarray*}
 &&\hspace{-16mm}\langle\a^*(\beta^*(v^*) r^*(x))+v^* r^*(S^2(x))-\beta^*(v^*) r^*(S(x))-\a^*(v^* r^*(S(x))),u\rangle\\
 &=&\langle v^*,\beta(\a(u) r(x))+u r(S^2(x))-\beta(u r(S(x))-\a(u) r(S(x))\rangle.
 \end{eqnarray*}
 We can obtain Eq.(\ref{eq:cse}) holds if and only if Eq.(\ref{eq:hg}) holds. Similarly, Eq.(\ref{eq:pp}) holds if and only if Eq.(\ref{eq:hjl}) holds. Therefore, \ref{it:e3} holds if and only if \ref{it:e1} holds.
 \end{proof}

 \begin{thm}\label{thm:hnv} Let $(\gg,N)$ be a Nijenhuis perm algebra, $(V,\ell,r)$ be a representation of $(\gg,\cdot)$, $(V^*, r^*-\ell^*,r^*,\beta^*)$ be a representation of $(\gg, N)$, $S:\gg\lr \gg$, $\a:V\lr V$ and $T:V\lr \gg$ be linear maps.
 \begin{enumerate}[(1)]
 \item \label{it:xn} $r=T+\tau(T)$ is a symmetric solution of the $(S+\a^*)$-admissible perm Yang-Baxter equation in the Nijenhuis perm algebra $(\gg\ltimes_{r^*-\ell^*,r^*}V^*, N+\beta^*)$ if and only if $T$ is a weak $\mathcal{O}$-operator associated to $(V,\ell,r)$ and $\a$, and satisfies $T\circ \beta=S\circ T$.
 \item \label{it:r6} Assume that $(V,\ell,r,\a)$ is a representation of $(\gg, N)$. If $T$ is a weak $\mathcal{O}$-operator associated to $(V,\ell,r,\a)$ and $T\circ \beta=S\circ T$, then $r=T+\tau(T)$ is a symmetric solution of the $(S+\a^*)$-admissible perm Yang-Baxter equation in the Nijenhuis perm algebra $(\gg\ltimes_{r^*-\ell^*,r^*}V^*, N+\beta^*)$. In addition, if $(\gg, N)$ is  $S$-admissible and Eqs.(\ref{eq:hjl}) and (\ref{eq:hg}) hold such that the Nijenhuis perm algebra $(\gg\ltimes_{r^*-\ell^*,r^*}V^*, N+\beta^*)$ is $(S+\a^*)$-admissible, then there is a Nijenhuis perm bialgebra $(\gg\ltimes_{r^*-\ell^*,r^*}V^*, N+\beta^*,\D, S+\a^*)$, where the linear map $\D=\D_r$ is defined by Eq.(\ref{eq:cop}) with $r=T+\tau(T)$.
 \end{enumerate}
 \end{thm}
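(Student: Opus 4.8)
The plan is to prove part \ref{it:xn} by unwinding the definition of the $(S+\a^*)$-admissible perm Yang-Baxter equation in the Nijenhuis perm algebra $(\gg\ltimes_{r^*-\ell^*,r^*}V^*, N+\beta^*)$. By Definition \ref{de:nj}, a symmetric $r=T+\tau(T)$ being a solution amounts to two separate requirements: the perm Yang-Baxter equation $P(r)=0$ (Eq.(\ref{eq:uj})) and the admissibility condition $((S+\a^*)\o\id-\id\o(N+\beta^*))(r)=0$ (the analogue of Eq.(\ref{eq:cg}) with $S$, $N$ replaced by the operators $S+\a^*$, $N+\beta^*$ on $\gg\oplus V^*$). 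I would treat these two conditions independently.

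For the first, I can invoke Lemma \ref{lem:ri} verbatim: for $r=T+\tau(T)$ the element $P(r)$ vanishes in $\gg\ltimes_{r^*-\ell^*,r^*}V^*$ if and only if $T$ satisfies Eq.(\ref{eq:lz}). For the second, I would expand $r=\sum_i(T(e_i)\o e_i^*+e_i^*\o T(e_i))$ in a basis $\{e_i\}$ of $V$ with dual $\{e_i^*\}$, and compute $((S+\a^*)\o\id)(r)$ and $(\id\o(N+\beta^*))(r)$ by using that $S+\a^*$ acts as $S$ on the $\gg$-leg and as $\a^*$ on the $V^*$-leg (and likewise for $N+\beta^*$). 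Applying the transposition identities $\sum_i T(e_i)\o\beta^*(e_i^*)=\sum_i T(\beta(e_i))\o e_i^*$ and $\sum_i\a^*(e_i^*)\o T(e_i)=\sum_i e_i^*\o T(\a(e_i))$, the difference collapses to $\sum_i(S\ci T-T\ci\beta)(e_i)\o e_i^*+\sum_i e_i^*\o(T\ci\a-N\ci T)(e_i)$, which by nondegeneracy of the pairing vanishes precisely when $N\ci T=T\ci\a$ (Eq.(\ref{eq:lm})) and $S\ci T=T\ci\beta$. Combining both conditions gives exactly that $T$ is a weak $\mathcal{O}$-operator associated to $(V,\ell,r)$ and $\a$ together with $T\ci\beta=S\ci T$, proving part \ref{it:xn}.

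For part \ref{it:r6}, the first assertion is immediate from part \ref{it:xn}: when $(V,\ell,r,\a)$ is a representation of $(\gg,N)$, a weak $\mathcal{O}$-operator associated to $(V,\ell,r,\a)$ is in particular a weak $\mathcal{O}$-operator associated to $(V,\ell,r)$ and $\a$, so $r=T+\tau(T)$ solves the $(S+\a^*)$-admissible perm Yang-Baxter equation. For the final claim, I would first establish that the big algebra is $(S+\a^*)$-admissible by appealing to Theorem \ref{thm:fv}: its conditions are met since $(V,\ell,r,\a)$ is a representation of $(\gg,N)$, $(\gg,N)$ is $S$-admissible, the hypothesis that $(V^*,r^*-\ell^*,r^*,\beta^*)$ is a representation of $(\gg,N)$ means (via Lemma \ref{lem:HO} and Definition \ref{de:mrt}) that $\beta$ is admissible to $(\gg,N)$ on $(V,\ell,r)$, and Eqs.(\ref{eq:hjl}) and (\ref{eq:hg}) hold. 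Hence $(\gg\ltimes_{r^*-\ell^*,r^*}V^*, N+\beta^*)$ is an $(S+\a^*)$-admissible Nijenhuis perm algebra carrying the symmetric solution $r$ of its $(S+\a^*)$-admissible perm Yang-Baxter equation, so Proposition \ref{pro:gth} applies directly to yield the quasitriangular Nijenhuis perm bialgebra $(\gg\ltimes_{r^*-\ell^*,r^*}V^*, N+\beta^*,\D, S+\a^*)$ with $\D=\D_r$.

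The main obstacle I anticipate is the bookkeeping in the second condition of part \ref{it:xn}: because $S+\a^*$ and $N+\beta^*$ act by different operators on the two summands $\gg$ and $V^*$, one must carefully track which operator hits which tensor leg of each term of $r$, and then correctly apply the dual-basis transposition identities to recombine the four resulting sums into the two clean conditions $S\ci T=T\ci\beta$ and $N\ci T=T\ci\a$. Everything else reduces to citing Lemma \ref{lem:ri}, Theorem \ref{thm:fv}, and Proposition \ref{pro:gth}.
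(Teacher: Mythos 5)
Your proposal is correct and follows exactly the route the paper intends: the paper's own proof is only the remark that it is ``similar to \cite[Theorem 5.17]{MLo}'', and the standard argument behind that reference is precisely your decomposition of the $(S+\a^*)$-admissible equation into $P(r)=0$ (handled by Lemma \ref{lem:ri}) and the linear condition $((S+\a^*)\o\id-\id\o(N+\beta^*))(r)=0$ (handled by the dual-basis transposition computation yielding $N\ci T=T\ci\a$ and $S\ci T=T\ci\beta$), followed by Theorem \ref{thm:fv} and Proposition \ref{pro:gth} for the bialgebra statement. Your writeup in fact supplies more detail than the paper does.
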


 \begin{proof} It is similar to \cite[Theorem 5.17]{MLo}.  \end{proof}

 \subsection{Classification of 2-dimensional quasitriangular noncommutative perm bialgebras}
 \begin{thm}\label{thm:gu} There are four kinds of noncommutative perm algebras of dimension 2 (see \cite[Example 2.3]{Hou}). We now give all quasitriangular perm bialgebraic structures on these 2-dimension perm algebras, where $\l, \nu, \kappa$ are parameters, see Items \ref{it:1}-\ref{it:4}.
 \begin{enumerate}[(a)]
 \item \label{it:1}For perm algebra
 \begin{align*}
     \begin{tabular}{r|rr}
          $\cdot$ & $e_1$  & $e_2$  \\
          \hline
           $e_1$ & $e_1$  & $0$  \\
           $e_2$ & $e_2$  & $0$ \\
     \end{tabular},
     \end{align*}
\begin{center}
 \begin{tabular}[t]{c|c}
 \hline\hline
 \bf{perm $r$-matrices}  & \bf{perm coalgebra induced by $r$-matrices}\\
 \hline
 $\makecell[c]{r=\lambda e_1 \o e_1+\kappa e_1\o e_2\\\quad+\kappa e_2\o e_1+\nu e_2\o e_2}~~~~(\kappa^2=\l\nu)$
 & $\left\{
 \begin{array}{ll}
 \d_r(e_1)=\l e_1\o e_1-\nu e_2\o e_2&\\
 \d_r(e_2)=\l(e_2\o e_1+e_1\o e_2)+2\kappa e_2\o e_2 &\\
 \end{array}
 \right.$. \\
   \hline
   \end{tabular}
   \end{center}
    \item \label{it:2}For perm algebra
 \begin{align*}
     \begin{tabular}{c|cc}
          $\cdot$ & $e_1$  & $e_2$  \\
          \hline
           $e_1$ & $e_1+e_2$  & $0$  \\
           $e_2$ & $e_2$  & $0$ \\
     \end{tabular},
     \end{align*}
\begin{center}
 \begin{tabular}[t]{c|c}
 \hline\hline
 \bf{perm $r$-matrices}  & \bf{perm coalgebra induced by $r$-matrices}\\
 \hline
 $r=\l e_2 \o e_2$
 & $\left\{
 \begin{array}{ll}
 \d_r(e_1)=-\l e_2\o e_2&\\
 \d_r(e_2)=0&\\
 \end{array}
 \right..$ \\
   \hline
   \end{tabular}
   \end{center}
\item \label{it:2}For perm algebra
  \begin{align*}
     \begin{tabular}{r|rr}
          $\cdot$ & $e_1$  & $e_2$  \\
          \hline
           $e_1$ & $0$  & $e_1$  \\
           $e_2$ & $0$  & $e_2$ \\
     \end{tabular},
     \end{align*}
\begin{center}
 \begin{tabular}[t]{c|c}
 \hline\hline
 \bf{perm $r$-matrices}  & \bf{perm coalgebra induced by $r$-matrices}\\
 \hline
 $r=\lambda e_1 \o e_1$
 & $\left\{
 \begin{array}{ll}
 \d_r(e_1)=0&\\
 \d_r(e_2)=\l e_1\o e_1 &\\
 \end{array}
 \right.$ \\
 \hline
 $r=\l e_2\o e_2$
 & $\left\{
 \begin{array}{ll}
  \d_r(e_1)=\l(e_1\o e_2+e_2\o e_1)&\\
  \d_r(e_2)=\l e_2\o e_2&\\
  \end{array}
  \right..$\\
   \hline
   \end{tabular}
   \end{center}
    \item \label{it:4}For perm algebra
 \begin{align*}
     \begin{tabular}{c|cc}
          $\cdot$ & $e_1$  & $e_2$  \\
          \hline
           $e_1$ & $0$  &  $e_1$  \\
           $e_2$ & $0$  & $e_1+e_2$ \\
     \end{tabular},
     \end{align*}
\begin{center}
 \begin{tabular}[t]{c|c}
 \hline\hline
 \bf{perm $r$-matrices}  & \bf{perm coalgebra induced by $r$-matrices}\\
 \hline
 $r=\lambda e_1 \o e_1$
 & $\left\{
 \begin{array}{ll}
 \d_r(e_1)=0&\\
 \d_r(e_2)=- \l e_1\o e_1&\\
 \end{array}
 \right..$ \\
   \hline
   \end{tabular}
   \end{center}
   \end{enumerate}
 \end{thm}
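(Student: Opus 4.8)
The plan is to reduce the whole classification to solving, on each of the four algebras, the single tensor equation $P(r)=0$ in $\gg^{\o 3}$. Indeed, taking $N=S=\id_\gg$ makes $(\gg,\id_\gg)$ a Nijenhuis perm algebra to which $\id_\gg$ is admissible, and turns Eq.(\ref{eq:cg}) into the tautology $(\id_\gg\o\id_\gg-\id_\gg\o\id_\gg)(r)=0$. Proposition~\ref{pro:gth} (equivalently Corollary~\ref{cor:gm}) then says that a symmetric $r\in\gg\o\gg$ endows $\gg$ with a quasitriangular perm bialgebra structure precisely when Eq.(\ref{eq:uj}), namely $P(r)=r_{13}r_{12}-r_{13}r_{23}+r_{23}r_{12}-r_{12}r_{23}=0$, holds, the coproduct being recovered from $r$ through Eq.(\ref{eq:cop}). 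Since by definition every quasitriangular structure arises in this way, the classification amounts to determining, for each algebra, all symmetric solutions of $P(r)=0$ together with their induced coproducts $\D_r$.

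I would carry this out as follows. For a fixed algebra, write the generic symmetric $r$-matrix
\[
r=\l\, e_1\o e_1+\k\,(e_1\o e_2+e_2\o e_1)+\nu\, e_2\o e_2,
\]
and expand the four summands of $P(r)$ in $\gg^{\o 3}$ according to the convention $r_{13}r_{12}=r^1\bar{r}^1\o\bar{r}^2\o r^2$ (and analogously for $r_{13}r_{23}$, $r_{23}r_{12}$, $r_{12}r_{23}$), evaluating each product $e_ie_j$ by the multiplication table of that algebra. Collecting the coefficients of the eight basis tensors $e_i\o e_j\o e_k$ yields a system of homogeneous quadratics in $(\l,\k,\nu)$, and solving it pins down the admissible parameters. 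For the first algebra the system collapses to the single relation $\k^2=\l\nu$, and substituting the resulting $r$ into Eq.(\ref{eq:cop}) reproduces the displayed $\d_r(e_1)=\l e_1\o e_1-\nu e_2\o e_2$ and $\d_r(e_2)=\l(e_2\o e_1+e_1\o e_2)+2\k e_2\o e_2$. The remaining three algebras are treated identically, each producing the one- or two-parameter family recorded in its table, after which a final application of Eq.(\ref{eq:cop}) supplies the companion coproducts.

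I expect the main obstacle to be purely computational: the honest expansion of $P(r)$ in $\gg^{\o 3}$ for each algebra, where eight coefficients must be tracked, the product is noncommutative, and the three-fold perm identity $(xy)z=x(yz)=x(zy)$ has to be respected throughout. A secondary difficulty is ensuring \emph{completeness}: the quadratic systems must be solved through all degenerate branches (such as $\l=0$ or $\k=0$) so that the tables genuinely exhaust the symmetric solutions rather than listing only a subfamily. As an independent check one may instead apply Corollary~\ref{cor:bbo}: with $N=S=\id_\gg$, a symmetric $r$ solves $P(r)=0$ if and only if $r^\sharp:\gg^*\to\gg$ is an $\mathcal{O}$-operator for the dual representation $(\gg^*,R^*-L^*,R^*)$, i.e.\ satisfies Eq.(\ref{eq:lq}); computing this representation and enumerating its $\mathcal{O}$-operators for each $2$-dimensional algebra recovers the same families and confirms the induced coproducts.
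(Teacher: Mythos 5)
Your reduction is the right one and is, in substance, the only proof available: the paper gives no argument for Theorem \ref{thm:gu}, and the implicit proof is exactly the direct computation you describe --- expand $P(r)$ for a generic symmetric $r$ using each multiplication table, solve the resulting homogeneous quadratic system, and feed the solutions into Eq.(\ref{eq:cop}). The detour through Corollary \ref{cor:gm} with $N=S=\id_\gg$ is harmless but unnecessary; Proposition \ref{de:qt} already identifies quasitriangular perm bialgebra structures with symmetric solutions of Eq.(\ref{eq:uj}). Your verification of case (a) is consistent with what one finds: $r_{13}r_{12}=r_{13}r_{23}$ there, and $P(r)=(\k^2-\l\nu)(e_2\o e_2\o e_1-e_1\o e_2\o e_2)$, giving the cone $\k^2=\l\nu$; cases (b) and (d) likewise collapse to the single lines tabulated.

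The completeness worry you flag is, however, not hypothetical: it bites in the third algebra, item (c). There $x\cdot y=\varepsilon(y)x$ with $\varepsilon(e_1)=0$, $\varepsilon(e_2)=1$, and for generic symmetric $r=\l e_1\o e_1+\k(e_1\o e_2+e_2\o e_1)+\nu e_2\o e_2$ one again gets $r_{13}r_{12}=r_{13}r_{23}$ while $r_{23}r_{12}-r_{12}r_{23}=u\o r-r\o u$ with $u=(\varepsilon\o\id)(r)=\k e_1+\nu e_2$, so that $P(r)=(\l\nu-\k^2)(e_2\o e_1\o e_1-e_1\o e_1\o e_2)$. The full solution set is therefore the cone $\k^2=\l\nu$, i.e.\ all rank-one symmetric tensors $\mu\,w\o w$ (for instance $w=e_1+e_2$), which is strictly larger than the two lines $\l e_1\o e_1$ and $\l e_2\o e_2$ recorded in the table, and the extra solutions produce genuinely new coproducts (e.g.\ $\D_r(e_1)=2e_1\o e_1+e_1\o e_2+e_2\o e_1$ for $r=(e_1+e_2)\o(e_1+e_2)$). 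So your assertion that the computation will ``reproduce the table'' is false as stated for item (c): an honest execution either corrects that row to the two-parameter family with $\k^2=\l\nu$ (parallel to item (a)), or requires a classification-up-to-automorphism qualifier that neither you nor the theorem invokes. You should also expect to find a sign discrepancy in $\d_r(e_2)$ for $r=\l e_1\o e_1$ in item (c) (the computation gives $-\l e_1\o e_1$, as in item (d)), of the same kind the authors themselves point out for \cite[Example 4.17]{Hou}.
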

 
 \begin{proof} It can be proved by a direct computation.
 \end{proof}

 \begin{rmk} \cite[Example 4.17]{Hou} corresponds to the case of $\l=0, \nu=1, \kappa=0$ in Item \ref{it:1}. We note that there is missing a negative sign in \cite[Example 4.17]{Hou} and $\D(e_1)$ should be $-e_2\o e_2$.
 \end{rmk}

\section{A method to obtain a \N perm algebra}\label{se:n} In this section, we give a method to construct a \N operator on a perm algebra. The method is strongly dependent on the perm Yang-Baxter equation, so it is essentially different to the case of associative algebras in \cite{MLo}.

 \subsection{Symplectic perm algebras from dual quasitriangular perm bialgebras}
 \begin{pro} \label{de:qt} Let $(\gg, \cdot)$ be a perm algebra. If $r\in \gg\o \gg$ is a symmetric solution of the {\bf classical perm Yang-Baxter equation} in $(\gg, \cdot)$, i.e., Eq.(\ref{eq:uj}). 
 Then $(\gg, \cdot, \D_r)$ is a perm bialgebra, where $\D_r$ is defined by Eq.(\ref{eq:cop}). In this case, we call this perm bialgebra {\bf quasitriangular} and denoted by $(\gg, \cdot, r, \D_r)$.
 \end{pro}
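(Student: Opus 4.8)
The plan is to verify directly that the comultiplication $\D_r$ defined in Eq.(\ref{eq:cop}) satisfies both the perm coalgebra coassociativity axiom (Eq.(\ref{eq:yds})) and the compatibility conditions (Eq.(\ref{eq:hh}) and its companions) that define a perm bialgebra. Rather than grinding through these by hand, I would invoke the machinery already assembled in the coboundary subsection: the author (following \cite{Hou}) has recorded that $\D_r$ makes $(\gg,\cdot,\D_r)$ into a perm bialgebra if and only if Eqs.(\ref{eq:ji})--(\ref{eq:hs}) all hold. So the entire statement reduces to checking that the single hypothesis---$r$ symmetric together with $P(r)=0$ (Eq.(\ref{eq:uj}))---forces each of those six conditions.

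First I would exploit symmetry. Since $r=\tau(r)$, we have $r-\tau(r)=0$, so Eqs.(\ref{eq:ji})--(\ref{eq:jm}) are satisfied \emph{trivially}: every one of them is an identity applied to $r-\tau(r)$, and both sides vanish at once. This disposes of four of the six conditions with essentially no computation, and is the reason symmetry is the natural hypothesis here (it is the perm analogue of the skew-symmetry that trivializes the ``unitary'' part of the associative Yang--Baxter story).

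The remaining work is Eqs.(\ref{eq:hu}) and (\ref{eq:hs}), the genuinely cubic conditions governed by the tensors $P(r)$, $Q(r)$, $J(r)$, $T(r)$ and $M(r)$. For Eq.(\ref{eq:hu}) the hypothesis $P(r)=0$ makes the left-hand side vanish, so I must show $J(r)=0$ as well; here symmetry is again the key, since applying $\tau$ in appropriate legs converts the summands of $J(r)$ into those of $P(r)$ (for a symmetric $r$ one checks $r_{ij}=r_{ji}$ in each slot, identifying $r_{12}r_{23}$ with $r_{23}r_{12}$ and so on), so that $J(r)$ is a permuted copy of $P(r)$ and hence zero. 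Likewise for Eq.(\ref{eq:hs}) I would show that, under symmetry, $Q(r)$ collapses to $P(r)$ (since $r_{13}r_{32}=r_{13}r_{23}$ when $r$ is symmetric) and that the flip-dependent tensors $T(r)$ and the correction term $M(r)$ reduce to expressions already controlled by $P(r)=0$.

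The main obstacle will be the bookkeeping in the last step: unlike the trivial vanishing in (\ref{eq:ji})--(\ref{eq:jm}), verifying $J(r)=0$, $Q(r)=0$, and the reduction of $T(r)$ and $M(r)$ requires tracking how $\tau$ acts tensor-leg by tensor-leg and confirming that each term of these auxiliary tensors is, after symmetrization, a term of $P(r)$. I expect no conceptual difficulty---only the risk of a sign or index slip---so I would organize the computation by writing $r=r^1\o r^2=r^2\o r^1$ and systematically rewriting each $r_{ij}r_{k\ell}$ using this to reduce everything to $P(r)$, concluding that $P(r)=0$ alone suffices.
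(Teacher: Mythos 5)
Your proposal reaches the right conclusion and is essentially a self-contained verification, which is a genuinely different route from the paper: the paper's entire proof of this proposition is the citation ``It is direct by \cite[Corollary 4.15]{Hou}'', whereas you specialize the general coboundary conditions (\ref{eq:ji})--(\ref{eq:hs}) (themselves quoted from \cite{Hou}) to a symmetric $r$ with $P(r)=0$. Your reduction is sound where it is trivial and where it is easy: $r-\tau(r)=0$ disposes of (\ref{eq:ji})--(\ref{eq:jm}); and since for symmetric $r$ one has $r_{32}=r_{23}$, $r_{21}=r_{12}$, $r_{31}=r_{13}$ as elements of $\gg\o\gg\o\gg$, it follows that $Q(r)=P(r)$, $T(r)=-P(r)$ and $M(r)=0$, which settles (\ref{eq:hs}). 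What each approach buys is clear: the paper's is shorter but opaque, yours makes the proposition independent of the external reference and exposes exactly which of Hou's six conditions symmetry kills for free.

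The one step of your sketch that needs repair is $J(r)=0$. The parenthetical identification of $r_{12}r_{23}$ with $r_{23}r_{12}$ is false even for symmetric $r$: writing $r=\sum_i a_i\o b_i$, these are $\sum_{i,j}a_i\o b_ia_j\o b_j$ and $\sum_{i,j}a_j\o a_ib_j\o b_i$, which differ by a commutator in the middle leg in a noncommutative perm algebra. More importantly, matching individual summands of $J(r)$ to summands of $P(r)$ using \emph{different} leg permutations for different terms proves nothing; you need a single linear automorphism of $\gg^{\o 3}$ carrying $P(r)$ to $J(r)$. Such an automorphism exists: let $\sigma$ be the cyclic permutation of tensor legs $1\mapsto 2\mapsto 3\mapsto 1$, so that $\sigma(r_{ab}r_{cd})=r_{\sigma(a)\sigma(b)}r_{\sigma(c)\sigma(d)}$. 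Then
$\sigma(P(r))=r_{21}r_{23}-r_{21}r_{31}+r_{31}r_{23}-r_{23}r_{31}$, which for symmetric $r$ equals $r_{12}r_{23}-r_{12}r_{13}+r_{13}r_{23}-r_{23}r_{13}=J(r)$. Hence $P(r)=0$ forces $J(r)=0$ and (\ref{eq:hu}) holds. With this single correction your argument closes completely.
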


 \begin{proof}
  It is direct by \cite[Corollary 4.15]{Hou}.
 \end{proof}

 \begin{pro}\mlabel{pro:qt}
 Let $(\gg, \cdot)$ be a perm algebra and $r\in \gg\o \gg$ be a symmetric element. Then the quadruple $(\gg, \cdot, r, \D_r)$ is a quasitriangular perm bialgebra, where $\D_r$ is defined by Eq.(\ref{eq:cop}), if and only if
 \begin{eqnarray}
 &(\D_r\o \id)(r)=r^1\o \br^1\o r^2\br^2&\mlabel{eq:qt1}
 \end{eqnarray}
 or
 \begin{eqnarray}
 &(\id\o \D_r)(r)=r^1 \br^1\o r^2\o \br^2&\mlabel{eq:qt2}
 \end{eqnarray}
 holds.
 \end{pro}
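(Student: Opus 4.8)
The plan is to reduce the whole biconditional to the single condition $P(r)=0$ and then check \eqref{eq:qt1} and \eqref{eq:qt2} against it separately. By Proposition \ref{de:qt}, and because $r$ is assumed symmetric, the quadruple $(\gg,\cdot,r,\D_r)$ is a quasitriangular perm bialgebra precisely when $r$ solves the classical perm Yang-Baxter equation, that is, when $P(r)=0$ as in \eqref{eq:uj}. So it suffices to prove that $P(r)=0$ is equivalent to \eqref{eq:qt1} and, separately, equivalent to \eqref{eq:qt2}; the disjunction in the statement is then immediate. Throughout I write $r=r^1\o r^2=\br^1\o\br^2$ for two bookkeeping copies of the same tensor.

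First I would dispose of \eqref{eq:qt1}. Expanding $\D_r$ via \eqref{eq:cop} in the first leg gives
\[
(\D_r\o\id)(r)=r^1\br^1\o\br^2\o r^2+\br^1\o r^1\br^2\o r^2-\br^1\o\br^2 r^1\o r^2 ,
\]
while by definition
\[
P(r)=r^1\br^1\o\br^2\o r^2-r^1\o\br^1\o r^2\br^2+\br^1\o r^1\br^2\o r^2-r^1\o r^2\br^1\o\br^2 .
\]
Subtracting the right-hand side $r^1\o\br^1\o r^2\br^2$ of \eqref{eq:qt1} and noting that $\br^1\o\br^2 r^1\o r^2$ equals $r^1\o r^2\br^1\o\br^2$ after interchanging the two identical copies of $r$, I obtain $(\D_r\o\id)(r)-r^1\o\br^1\o r^2\br^2=P(r)$. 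Hence \eqref{eq:qt1} holds if and only if $P(r)=0$; notably this step uses only relabeling of the copies, not the symmetry of $r$.

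The hard part will be \eqref{eq:qt2}, where the hypothesis $\tau(r)=r$ becomes indispensable. Expanding $\D_r$ in the second leg,
\[
(\id\o\D_r)(r)=r^1\o r^2\br^1\o\br^2+r^1\o\br^1\o r^2\br^2-r^1\o\br^1\o\br^2 r^2 ,
\]
with target $r^1\br^1\o r^2\o\br^2$. The identity I would establish is
\[
(\id\o\D_r)(r)-r^1\br^1\o r^2\o\br^2=-(\id\o\tau)\,P(r) ,
\]
where $\id\o\tau$ flips the last two tensor factors. To prove it I would apply $\id\o\tau$ to each of the four summands of $P(r)$ and then use $\tau(r)=r$ --- swapping the two legs of whichever copy is needed, together with a relabeling of the copies --- to identify them one by one with the three summands of $(\id\o\D_r)(r)$ and the target term. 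For instance, the summand $r^1\o\br^1\o r^2\br^2$ of $P(r)$ is carried by $\id\o\tau$ to $r^1\o r^2\br^2\o\br^1$, which after a leg-swap on the $\br$-copy is exactly $r^1\o r^2\br^1\o\br^2$; the remaining three summands match analogously. The delicate point, and essentially the only real computation, is precisely this term-by-term matching, since each identification quietly invokes the symmetry in a slightly different slot. As $\id\o\tau$ is invertible, the identity yields \eqref{eq:qt2} if and only if $P(r)=0$.

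Putting the pieces together, $(\gg,\cdot,r,\D_r)$ is quasitriangular if and only if $P(r)=0$, if and only if \eqref{eq:qt1} holds, if and only if \eqref{eq:qt2} holds; in particular it is quasitriangular if and only if \eqref{eq:qt1} or \eqref{eq:qt2} holds, which is the assertion. I expect \eqref{eq:qt1} and the reduction to $P(r)=0$ to be routine, with all the genuine bookkeeping concentrated in the symmetry argument for \eqref{eq:qt2}.
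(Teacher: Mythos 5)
Your proposal is correct and follows essentially the same route as the paper: expand $(\D_r\o\id)(r)$ and $(\id\o\D_r)(r)$ via Eq.~(\ref{eq:cop}), identify the first with $P(r)$ up to relabelling of the two copies of $r$, and use the symmetry of $r$ to reduce the second to $P(r)=0$ as well. The only difference is that you make explicit (via the identity $(\id\o\D_r)(r)-r^1\br^1\o r^2\o\br^2=-(\id\o\tau)P(r)$) the symmetry bookkeeping that the paper compresses into the phrase ``by the symmetry of $r$''.
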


 \begin{proof} By Eq.(\ref{eq:cop}), we have
 \begin{eqnarray*}\mlabel{eq:qt1-1}
 (\D_r\o \id)(r)
 &\stackrel{ }=&r^1 \br^1\o \br^2\o r^2+\br^1\o r^1 \br^2\o r^2-\br^1\o \br^2 r^1\o r^2.
 \end{eqnarray*}
 Then Eq.(\ref{eq:uj}) $\Leftrightarrow$ Eq.(\ref{eq:qt1}).
 By the symmetry of $r$, we obtain that Eq.(\ref{eq:qt2}) $\Leftrightarrow$  Eq.(\ref{eq:uj}).
 \end{proof}

 \begin{defi}\mlabel{de:ccybe} Let $(\gg, \D)$ be a perm coalgebra and $\om\in (\gg\o \gg)^*$ be a bilinear form. The equation for all $x,y,z\in \gg$
 \begin{eqnarray}\mlabel{eq:ccybe}
 \om(x_{(1)},z)\om(x_{(2)}, y)-\om(x, z_{(1)})\om(y, z_{(2)})+\om(y_{(1)},z)\om(x,y_{(2)})-\om(x,y_{(1)})\om(y_{(2)},z)=0,
 \end{eqnarray}
 is called a {\bf classical co-perm Yang-Baxter equation} in $(\gg, \D)$.
 \end{defi}

 \begin{pro}
  \begin{enumerate}[(1)]
   \item Let $(\gg, \D)$ be a perm coalgebra and $\om\in (\gg\o \gg)^*$ be a solution of a classical co-perm Yang-Baxter equation in $(\gg, \D)$. Then $\om^*\in \gg^*\o \gg^*$ defined by $\om^*(x, y)=\om(x, y)$ is a solution of a classical perm Yang-Baxter equation in $(\gg^*, \D^*)$.
   \item Let $(\gg, \cdot)$ be a finite-dimensional perm algebra and $r\in \gg\o \gg$ be a solution of a classical perm Yang-Baxter equation in $(\gg, \cdot)$. Then $r^*\in (\gg^*\o \gg^*)^*$ defined by $r^*(a^*, b^*)=\langle a^*, r^1\rangle \langle b^*, r^2\rangle$, where $a^*, b^*\in \gg^*$, is a solution of a classical co-perm Yang-Baxter equation in $(\gg^*, \cdot^*)$.
  \end{enumerate}
 \end{pro}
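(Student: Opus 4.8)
The plan is to treat both statements as one duality computation, matching the four summands of $P(r)=r_{13}r_{12}-r_{13}r_{23}+r_{23}r_{12}-r_{12}r_{23}$ one-to-one with the four summands of the classical co-perm Yang-Baxter expression in Eq.(\ref{eq:ccybe}). The only ingredients are the leg conventions $r_{13}r_{12}=r^1\br^1\o\br^2\o r^2$ (and its analogues), the defining formulas for the dual (co)product from Proposition \ref{pro:de:cl}, and the nondegeneracy of the evaluation pairing $\gg^*\o\gg^*\o\gg^*\times\gg\o\gg\o\gg\lr K$ in its first slot. Throughout I use two copies of the datum, writing $r=r^1\o r^2$, $\br=\br^1\o\br^2$ in part (2), and $\om^*=w^1\o w^2$, $\overline{\om^*}=\bw^1\o\bw^2$ in part (1), where $w^i,\bw^i\in\gg^*$ represent $\om$ as $\om^*(x,y)=\langle w^1,x\rangle\langle w^2,y\rangle=\om(x,y)$.

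For part (2), Proposition \ref{pro:de:cl}(2) and the finite-dimensionality of $\gg$ make $(\gg^*,\cdot^*)$ a perm coalgebra, so that Eq.(\ref{eq:ccybe}) is meaningful there; write $\cdot^*(\xi)=\xi_{(1)}\o\xi_{(2)}$. The key step is to pair $\xi\o\eta\o\zeta\in\gg^*\o\gg^*\o\gg^*$ against $P(r)\in\gg\o\gg\o\gg$ and to recognise each summand. For the first one,
\begin{eqnarray*}
\langle\xi\o\eta\o\zeta,\ r^1\br^1\o\br^2\o r^2\rangle
=\langle\xi,r^1\br^1\rangle\langle\eta,\br^2\rangle\langle\zeta,r^2\rangle
=\langle\xi_{(1)},r^1\rangle\langle\xi_{(2)},\br^1\rangle\langle\eta,\br^2\rangle\langle\zeta,r^2\rangle,
\end{eqnarray*}
where the last equality uses $\langle\xi_{(1)}\o\xi_{(2)},a\o b\rangle=\langle\xi,a\cdot b\rangle$. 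Regrouping the four scalars as $\big(\langle\xi_{(1)},r^1\rangle\langle\zeta,r^2\rangle\big)\big(\langle\xi_{(2)},\br^1\rangle\langle\eta,\br^2\rangle\big)$ and invoking $r^*(a^*,b^*)=\langle a^*,r^1\rangle\langle b^*,r^2\rangle$ identifies it with $r^*(\xi_{(1)},\zeta)\,r^*(\xi_{(2)},\eta)$, the first summand of Eq.(\ref{eq:ccybe}) for $\om=r^*$ at $(x,y,z)=(\xi,\eta,\zeta)$. The same bookkeeping matches $-r_{13}r_{23}$, $r_{23}r_{12}$, $-r_{12}r_{23}$ with $-r^*(\xi,\zeta_{(1)})r^*(\eta,\zeta_{(2)})$, $r^*(\eta_{(1)},\zeta)r^*(\xi,\eta_{(2)})$, $-r^*(\xi,\eta_{(1)})r^*(\eta_{(2)},\zeta)$. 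Hence $\langle\xi\o\eta\o\zeta,P(r)\rangle$ equals the co-perm Yang-Baxter expression for $r^*$, so Eq.(\ref{eq:uj}), i.e.\ $P(r)=0$, forces $r^*$ to solve it.

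Part (1) is the mirror image. By Proposition \ref{pro:de:cl}(1) the dual $(\gg^*,\D^*)$ is a perm algebra (no finiteness needed), with product $\langle\phi\cdot\psi,x\rangle=\langle\phi,x_{(1)}\rangle\langle\psi,x_{(2)}\rangle$. Pairing $P(\om^*)\in\gg^*\o\gg^*\o\gg^*$ against $x\o y\o z$ and using the same two identities in the opposite direction turns the first summand into $\langle w^1\bw^1,x\rangle\langle\bw^2,y\rangle\langle w^2,z\rangle=\om(x_{(1)},z)\,\om(x_{(2)},y)$, and the remaining three produce the other terms of Eq.(\ref{eq:ccybe}). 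Thus $\langle P(\om^*),x\o y\o z\rangle$ equals the co-perm Yang-Baxter expression for $\om$, which vanishes for all $x,y,z\in\gg$ by hypothesis; by nondegeneracy of the pairing in its first slot this gives $P(\om^*)=0$, i.e.\ $\om^*$ solves the classical perm Yang-Baxter equation in $(\gg^*,\D^*)$.

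The computation is routine, and the only place demanding care is the index/slot bookkeeping: in each of the four products one must multiply the shared tensor leg in the correct order and pair the two surviving legs with the correct members of $\{x,y,z\}$ so as to reproduce exactly the pattern $(x_{(1)},z),(x_{(2)},y),(y_{(1)},z),\dots$ of Eq.(\ref{eq:ccybe}). Once the conventions $r_{13}r_{12}=r^1\br^1\o\br^2\o r^2$ and its analogues are fixed, this matching is forced, and in fact both pairings above are exact equalities, so each implication could be upgraded to an ``if and only if'' were that needed.
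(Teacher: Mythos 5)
Your proposal is correct and is exactly the computation the paper leaves implicit: the paper's proof is the one-line remark that the claim ``is direct by Proposition \ref{pro:de:cl}'', i.e.\ dualize via the pairing, which is precisely your term-by-term matching of the four summands of $P(r)$ with the four summands of Eq.~(\ref{eq:ccybe}). Your bookkeeping of the shared tensor legs is consistent with the paper's convention $r_{13}r_{12}=r^1\br^1\o\br^2\o r^2$, so nothing further is needed.
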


 \begin{proof} It is direct by Proposition \ref{pro:de:cl}.
 \end{proof}

 \begin{thm}\mlabel{de:cqt} Let $(\gg, \D)$ be a perm coalgebra and $\om\in (\gg\o \gg)^*$ be a symmetric (in the sense of $\om(x, y)=\om(y, x)$) solution of the classical co-perm Yang-Baxter equation in $(\gg, \D)$.
 Then $(\gg, \cdot_{\om}, \D)$ is a perm bialgebra, where the multiplication on $\gg$ is defined by
 \begin{eqnarray}\mlabel{eq:p}
 x\cdot_{\om}y=x_{(1)}\om(x_{(2)}, y)+y_{(1)}\om(x, y_{(2)})-y_{(2)}\om(x, y_{(1)}),\quad \forall~x, y\in \gg.
 \end{eqnarray}
 This bialgebra is called a {\bf dual quasitriangular perm bialgebra} denoted by $(\gg, \D, \om, \cdot_{\om})$.
 \end{thm}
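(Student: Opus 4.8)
The plan is to deduce Theorem~\ref{de:cqt} as the linear dual of Proposition~\ref{de:qt}, so that no fresh computation with the perm bialgebra compatibility conditions Eq.(\ref{eq:hh}) is needed. The key observation is that a symmetric solution $\om$ of the classical co-perm Yang-Baxter equation Eq.(\ref{eq:ccybe}) in $(\gg,\D)$ is nothing but a symmetric solution of the classical perm Yang-Baxter equation Eq.(\ref{eq:uj}) in the dual perm algebra $(\gg^*,\D^*)$; applying Proposition~\ref{de:qt} there produces a quasitriangular perm bialgebra on $\gg^*$, and dualizing it should return precisely $(\gg,\cdot_\om,\D)$. Everything is then driven by Proposition~\ref{de:qt}, the duality of Proposition~\ref{pro:de:cl}, and the matched-pair characterization of Lemma~\ref{lem:jn}.

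First I would use Proposition~\ref{pro:de:cl}(1) to equip $\gg^*$ with the perm algebra structure $\D^*$. By the preceding proposition transferring solutions between $(\gg,\D)$ and $(\gg^*,\D^*)$, the hypothesis that $\om$ solves Eq.(\ref{eq:ccybe}) means that, regarded as $\om=\om^1\o\om^2\in\gg^*\o\gg^*$, it solves Eq.(\ref{eq:uj}) in $(\gg^*,\D^*)$; and $\om(x,y)=\om(y,x)$ says exactly that $r:=\om$ is symmetric. Proposition~\ref{de:qt} then applies and yields a quasitriangular perm bialgebra $(\gg^*,\D^*,\D_\om)$, where $\D_\om:\gg^*\lr\gg^*\o\gg^*$ is the coproduct built from $r=\om$ by Eq.(\ref{eq:cop}), namely $\D_\om(\xi)=\xi\ci\om^1\o\om^2+\om^1\o\xi\ci\om^2-\om^1\o\om^2\ci\xi$ with $\ci=\D^*$.

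It remains to identify the linear dual of $(\gg^*,\D^*,\D_\om)$ with $(\gg,\cdot_\om,\D)$. The transpose of the multiplication $\D^*$ is $\D$ on $\gg^{**}=\gg$, so the dual bialgebra has comultiplication $\D$. For the multiplication, I would compute, for $\xi\in\gg^*$ and $x,y\in\gg$, the pairing $\langle\D_\om(\xi),x\o y\rangle$ using the defining relation $\langle\D^*(\alpha\o\beta),z\rangle=\langle\alpha,z_{(1)}\rangle\langle\beta,z_{(2)}\rangle$ of Proposition~\ref{pro:de:cl}(1) and $\langle\om^1,a\rangle\langle\om^2,b\rangle=\om(a,b)$. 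The three Sweedler terms collapse to $\langle\xi,x_{(1)}\rangle\om(x_{(2)},y)+\langle\xi,y_{(1)}\rangle\om(x,y_{(2)})-\langle\xi,y_{(2)}\rangle\om(x,y_{(1)})$, i.e.\ to $\langle\xi,x\cdot_\om y\rangle$ with $\cdot_\om$ as in Eq.(\ref{eq:p}); hence the transpose of $\D_\om$ is exactly $\cdot_\om$. Finally, being a perm bialgebra is a self-dual condition: by Lemma~\ref{lem:jn} it is equivalent to the matched pair $(\gg,\gg^*,{R_\cdot}^*-{L_\cdot}^*,{R_\cdot}^*,{R_\circ}^*-{L_\circ}^*,{R_\circ}^*)$, and this matched-pair datum is symmetric under interchange of $\gg$ and $\gg^*$, so the dual of the perm bialgebra $(\gg^*,\D^*,\D_\om)$ is again a perm bialgebra, namely $(\gg,\cdot_\om,\D)$.

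I expect the main obstacle to be bookkeeping rather than conceptual: correctly matching the three terms of $\D_\om$ against the three terms of Eq.(\ref{eq:p}), tracking which slot carries $x$ versus $y$ and the sign on the last term, and keeping the transpose conventions consistent so that $\D^*$ dualizes back to $\D$. A structural caveat is finite-dimensionality, since the identifications $\gg^{**}=\gg$ and $\gg^*\o\gg^*=(\gg\o\gg)^*$ (already implicit in writing $\om\in\gg^*\o\gg^*$) require $\gg$ finite-dimensional, or at least $\om$ of finite rank. In the general case one can instead prove Theorem~\ref{de:cqt} directly by dualizing the computation behind Proposition~\ref{de:qt}: substitute Eq.(\ref{eq:p}) and verify that $\cdot_\om$ is perm-associative and that the compatibility conditions Eq.(\ref{eq:hh}) hold, in each case reducing the resulting expression to the classical co-perm Yang-Baxter equation Eq.(\ref{eq:ccybe}).
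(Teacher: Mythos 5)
Your argument is correct, but it takes a genuinely different route from the paper, whose proof of Theorem~\ref{de:cqt} is a direct verification (the paper simply records it as ``Straightforward'', i.e.\ substitute Eq.~(\ref{eq:p}) and check the perm identity for $\cdot_\om$ and the compatibilities Eq.~(\ref{eq:hh}) against Eq.~(\ref{eq:ccybe})). Your dualization is sound where it applies: the term-by-term pairing $\langle\D_\om(\xi),x\o y\rangle=\langle\xi,x\cdot_\om y\rangle$ does check out against the three terms of Eq.~(\ref{eq:p}), Eq.~(\ref{eq:ccybe}) is indeed the pairing of $P(r)=0$ from Eq.~(\ref{eq:uj}) with $x\o y\o z$ under $\circ=\D^*$, and the self-duality of the perm bialgebra axioms does follow from Lemma~\ref{lem:jn} because the matched-pair conditions of Proposition~\ref{pro:gio} are symmetric under exchanging $\gg$ and $\mathfrak{h}$ together with their actions. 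What your route buys is economy -- everything is outsourced to Proposition~\ref{de:qt} and the duality formalism, with no new computation -- but its cost is exactly the caveat you flag: the steps $\om\in(\gg\o\gg)^*\rightsquigarrow\om\in\gg^*\o\gg^*$, $\gg^{**}=\gg$, and the dualization of the bialgebra on $\gg^*$ all require $\gg$ finite-dimensional (or at least $\om$ of finite rank together with finite-dimensionality for the double dual), a hypothesis absent from the statement. Since the identities to be verified involve only finitely many elements at a time, the theorem holds without that hypothesis, and your fallback -- the direct computation reducing everything to Eq.~(\ref{eq:ccybe}) -- is precisely the paper's intended proof; to cover the general case you should treat that fallback as the actual argument and present the dualization as the finite-dimensional shortcut.
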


 \begin{proof} Straightforward.
 \end{proof}

 \begin{pro}\mlabel{pro:cqt} Let $(\gg, \D)$ be a perm coalgebra and $\om\in (\gg\o \gg)^*$ be symmetric. Then the quadruple $(\gg, \D, \om, \cdot_{\om})$ is a dual quasitriangular perm bialgebra, where $\cdot_{\om}$ is defined by Eq.(\ref{eq:p}), if and only if
 \begin{eqnarray}\mlabel{eq:cqt1}
 \om(x\cdot_{\om} y, z)=\om(x, z_{(1)})\om(y, z_{(2)})
 \end{eqnarray}
 or
 \begin{eqnarray}\mlabel{eq:cqt2}
 \om(x, y\cdot_{\om}z)=\om(x_{(1)}, y)\om(x_{(2)}, z)
 \end{eqnarray}
 holds, where $x, y, z\in \gg$.
 \end{pro}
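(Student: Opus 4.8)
The plan is to unwind what ``dual quasitriangular perm bialgebra'' means and reduce everything to the classical co-perm Yang--Baxter equation Eq.(\ref{eq:ccybe}). By Theorem \ref{de:cqt} together with Definition \ref{de:ccybe}, the quadruple $(\gg, \D, \om, \cdot_{\om})$ is a dual quasitriangular perm bialgebra precisely when the symmetric form $\om$ is a solution of Eq.(\ref{eq:ccybe}). Hence it suffices to prove the two equivalences: Eq.(\ref{eq:cqt1}) $\Leftrightarrow$ Eq.(\ref{eq:ccybe}) and Eq.(\ref{eq:cqt2}) $\Leftrightarrow$ Eq.(\ref{eq:ccybe}). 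This parallels the proof of Proposition \ref{pro:qt}, with $\cdot_{\om}$ and $\om$ playing the dual roles of $\D_r$ and $r$.

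First I would expand the left-hand side of Eq.(\ref{eq:cqt1}) by substituting the definition Eq.(\ref{eq:p}) of $\cdot_{\om}$ into the first slot of $\om$. Using bilinearity, $\om(x\cdot_{\om}y, z)$ becomes $\om(x_{(1)},z)\om(x_{(2)},y)+\om(y_{(1)},z)\om(x,y_{(2)})-\om(y_{(2)},z)\om(x,y_{(1)})$. Moving the right-hand term $\om(x,z_{(1)})\om(y,z_{(2)})$ of Eq.(\ref{eq:cqt1}) to the left and comparing the four resulting summands term by term with Eq.(\ref{eq:ccybe}), one sees they coincide exactly (the symmetry of $\om$ is not even needed here, since $\om(y_{(2)},z)\om(x,y_{(1)})=\om(x,y_{(1)})\om(y_{(2)},z)$). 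This gives Eq.(\ref{eq:cqt1}) $\Leftrightarrow$ Eq.(\ref{eq:ccybe}).

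Next I would treat Eq.(\ref{eq:cqt2}) the same way, now substituting Eq.(\ref{eq:p}) into the second slot: $\om(x, y\cdot_{\om}z)=\om(x,y_{(1)})\om(y_{(2)},z)+\om(x,z_{(1)})\om(y,z_{(2)})-\om(x,z_{(2)})\om(y,z_{(1)})$. Subtracting the right-hand side $\om(x_{(1)},y)\om(x_{(2)},z)$ of Eq.(\ref{eq:cqt2}) and then applying the symmetry $\om(a,b)=\om(b,a)$ to each factor, the resulting identity matches Eq.(\ref{eq:ccybe}) after the relabeling $y\leftrightarrow z$; concretely, the expression obtained is the negative of the left-hand side of Eq.(\ref{eq:ccybe}) evaluated at $(x,z,y)$, so it vanishes for all $x,y,z$ if and only if Eq.(\ref{eq:ccybe}) does. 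This establishes Eq.(\ref{eq:cqt2}) $\Leftrightarrow$ Eq.(\ref{eq:ccybe}) and, combined with the previous step, completes the proof.

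I expect the only real obstacle to be the Sweedler-notation bookkeeping in the second equivalence: one must track the coproduct legs carefully and apply the symmetry of $\om$ to exactly the right factors, so that the four summands land on Eq.(\ref{eq:ccybe}) with $y$ and $z$ interchanged rather than on some other permutation of the variables. The first equivalence, by contrast, is a purely mechanical term-by-term identification requiring no use of symmetry at all.
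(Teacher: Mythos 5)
Your proposal is correct and is essentially the paper's own argument: the paper's proof consists of the single line ``It is direct by Eqs.(\ref{eq:p}) and (\ref{eq:ccybe})'', i.e.\ exactly the term-by-term expansion you carry out, and your bookkeeping for both equivalences (including the observation that symmetry of $\om$ is needed only for Eq.(\ref{eq:cqt2}), where the identity matches Eq.(\ref{eq:ccybe}) with $y$ and $z$ interchanged) checks out.
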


 \begin{proof} It is direct by Eqs.(\ref{eq:p}) and (\ref{eq:ccybe}).
 \end{proof}

 \begin{defi}
  Let $(\gg, \cdot)$ be a perm algebra and $\om\in (\gg\o \gg)^*$ be symmetric. If for all $x, y, z\in \gg$, the equation below holds:
 \begin{eqnarray}\mlabel{eq:symp}
 \om(x y, z)+\om(x z,y)=\om(z x, y)+\om(z y,x).
 \end{eqnarray}
 Then $(\gg, \cdot)$ is a {\bf symplectic perm algebra} denoted by $(\gg, \cdot, \om)$.
 \end{defi}

 \begin{pro}\mlabel{pro:de:sym} Let $(\gg, \D)$ be a perm coalgebra and $\om\in (\gg\o \gg)^*$ be symmetric. If $(\gg, \D, \om, \cdot_{\om})$ is a dual quasitriangular perm bialgebra, where $\cdot_{\om}$ is defined by Eq.(\ref{eq:p}), then $(\gg, \cdot_\om, \om)$ is a symplectic perm algebra.
 \end{pro}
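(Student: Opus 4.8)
The plan is to deduce the symplectic identity \meqref{eq:symp} for $\cdot_{\om}$ directly from the classical co-perm Yang-Baxter equation \meqref{eq:ccybe}, which $\om$ satisfies by hypothesis. Because $(\gg,\D,\om,\cdot_{\om})$ is a dual quasitriangular perm bialgebra, Proposition \mref{pro:cqt} furnishes the evaluation identity \meqref{eq:cqt1}, namely $\om(a\cdot_{\om}b,c)=\om(a,c_{(1)})\om(b,c_{(2)})$ for all $a,b,c\in\gg$, while Theorem \mref{de:cqt} guarantees that $\om$ solves \meqref{eq:ccybe}.

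First I would apply \meqref{eq:cqt1} to each of the four terms of \meqref{eq:symp}, yielding
\begin{eqnarray*}
\om(x\cdot_{\om}y,z)&=&\om(x,z_{(1)})\om(y,z_{(2)}),\qquad \om(x\cdot_{\om}z,y)=\om(x,y_{(1)})\om(z,y_{(2)}),\\
\om(z\cdot_{\om}x,y)&=&\om(z,y_{(1)})\om(x,y_{(2)}),\qquad \om(z\cdot_{\om}y,x)=\om(z,x_{(1)})\om(y,x_{(2)}).
\end{eqnarray*}
Hence the symplectic condition is equivalent to
\begin{eqnarray*}
&&\om(x,z_{(1)})\om(y,z_{(2)})+\om(x,y_{(1)})\om(z,y_{(2)})-\om(z,y_{(1)})\om(x,y_{(2)})-\om(z,x_{(1)})\om(y,x_{(2)})=0.
\end{eqnarray*}

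Next I would invoke the symmetry $\om(a,b)=\om(b,a)$ to rewrite the summands of \meqref{eq:ccybe}, for instance $\om(x_{(1)},z)\om(x_{(2)},y)=\om(z,x_{(1)})\om(y,x_{(2)})$ and $\om(y_{(1)},z)\om(x,y_{(2)})=\om(z,y_{(1)})\om(x,y_{(2)})$. A term-by-term comparison then shows that the left-hand side of the displayed identity equals $-1$ times the left-hand side of \meqref{eq:ccybe}, which vanishes since $\om$ solves the classical co-perm Yang-Baxter equation. Therefore \meqref{eq:symp} holds and $(\gg,\cdot_{\om},\om)$ is a symplectic perm algebra.

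The argument is essentially bookkeeping, and I do not anticipate a genuine obstacle: both the evaluation identity \meqref{eq:cqt1} and equation \meqref{eq:ccybe} come for free from the dual quasitriangular hypothesis. The only point demanding care is the correct pairing of the two legs of each $\D$-term under the symmetry of $\om$, so that the four summands align exactly, in the right order and with the right signs, with those of \meqref{eq:ccybe}.
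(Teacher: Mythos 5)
Your proposal is correct and follows essentially the same route as the paper: apply the evaluation identity \meqref{eq:cqt1} to each of the four terms of \meqref{eq:symp} and then recognize the resulting expression as (a sign and symmetry rearrangement of) the classical co-perm Yang-Baxter equation \meqref{eq:ccybe}. The paper's proof is exactly this two-step computation, merely leaving the symmetry bookkeeping implicit where you spell it out.
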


 \begin{proof} For all $x, y, z\in \gg$, we have
 \begin{eqnarray*}
 &&\hspace{-13mm}\om(x\cdot_{\om} y, z)+\om(x\cdot_{\om} z,y)-\om(z\cdot_{\om} x, y)-\om(z\cdot_{\om} y,x)\\
 &\stackrel{(\ref{eq:cqt1})}{=}&\om(x, z_{(1)})\om(y, z_{(2)})+\om(x,y_{(1)})\om(z, y_{(2)})-\om(z,y_{(1)})\om(x, y_{(2)})-\om(z,x_{(1)})\om(y,x_{(2)})\\
 &\stackrel{(\ref{eq:ccybe})}{=}&0,
 \end{eqnarray*}
 as desired.
 \end{proof}

\subsection{\N operators from symplectic perm algebras}

 \begin{thm}\mlabel{thm:ln}
  Let $(\gg, \cdot, \om)$ be a symplectic perm algebra and $r\in \gg\o \gg$ be an element. If $(\gg, \cdot, r, \D_r)$ is a quasitriangular perm bialgebra together with the comultiplication $\D_r$ defined in Eq.(\ref{eq:cop}) and further $(\gg, \D_r, \om, \cdot_{\om})$ is a dual quasitriangular perm bialgebra together with the multiplication $\cdot_{\om}$ given in Eq.(\ref{eq:p}).
 Then $(\gg, N)$ is a \N perm algebra, where $N:\gg\lr \gg$ is given by
 \begin{eqnarray}
 &N(x)=\om(x, r^1)r^2,~~\forall~x\in \gg.&\mlabel{eq:ys}
 \end{eqnarray}
 \end{thm}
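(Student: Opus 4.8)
The plan is to recast the Nijenhuis identity Eq.(\ref{eq:ggg}) as a \emph{multiplicativity} statement for $N$ and then read that statement off from the two quasitriangularity hypotheses, one used on each side. The first and decisive step is to identify the product $\cdot_{\om}$ of Eq.(\ref{eq:p}) with the $N$-deformed product, that is, to prove
\begin{eqnarray*}
x\cdot_{\om}y=N(x)y+x N(y)-N(xy),\quad\forall~x,y\in\gg.
\end{eqnarray*}
To establish this I would expand $x\cdot_{\om}y=x_{(1)}\om(x_{(2)},y)+y_{(1)}\om(x,y_{(2)})-y_{(2)}\om(x,y_{(1)})$ using $\D=\D_r$ from Eq.(\ref{eq:cop}). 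Repeatedly invoking the symmetry of $r$ and of $\om$ together with $N(w)=\om(w,r^1)r^2=\om(w,r^2)r^1$, each of the three parts contributes a leading term, and after the terms $yN(x)$ and $-yN(x)$ (and a pair of $r^1\om(x,yr^2)$ terms) cancel, one is left with $xN(y)+N(x)y$ plus a remainder $r^1\bigl[\om(xr^2,y)-\om(r^2x,y)-\om(x,r^2y)\bigr]$. The crux of this step is to collapse this bracket to $-\om(xy,r^2)$: this is exactly where the symplectic hypothesis enters, since applying Eq.(\ref{eq:symp}) to $\om(xr^2,y)$ and to $\om(r^2x,y)$, and the symmetry of $\om$ to $\om(x,r^2y)$, makes everything cancel except $-\om(xy,r^2)$, so the remainder equals $-\om(xy,r^2)r^1=-N(xy)$.

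Granting this identity, the Nijenhuis equation Eq.(\ref{eq:ggg}) becomes equivalent to the single condition $N(x\cdot_{\om}y)=N(x)N(y)$. Indeed, applying $N$ to the displayed formula gives $N(x\cdot_{\om}y)=N(N(x)y)+N(xN(y))-N^2(xy)$, and equating this with $N(x)N(y)$ rearranges precisely to Eq.(\ref{eq:ggg}); this is the coboundary incarnation of Eqs.(\ref{eq:vb})--(\ref{eq:yq}).

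It therefore remains to verify $N(x)N(y)=N(x\cdot_{\om}y)$, and here the two quasitriangularity assumptions are used symmetrically. For the left-hand side I would write $N(x)N(y)=\om(x,r^1)\om(y,\br^1)(r^2\br^2)$ and apply the quasitriangular relation of Proposition \ref{pro:qt}, Eq.(\ref{eq:qt1}), in the form $(\D_r\o\id)(r)=r^1\o\br^1\o r^2\br^2$, contracting the first two legs against $\om(x,-)$ and $\om(y,-)$; this converts the product $r^2\br^2$ into a three-term sum in which $\om$ is evaluated on products of $r$-legs. For the right-hand side I would write $N(x\cdot_{\om}y)=\om(x\cdot_{\om}y,\rho^1)\rho^2$, apply the dual quasitriangular relation of Proposition \ref{pro:cqt}, Eq.(\ref{eq:cqt1}), namely $\om(x\cdot_{\om}y,z)=\om(x,z_{(1)})\om(y,z_{(2)})$ with $z=\rho^1$, and then expand $\D_r(\rho^1)$ via Eq.(\ref{eq:cop}). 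This produces a three-term sum of identical shape, and a relabelling of the auxiliary copies of $r$ shows the two expressions agree term by term, giving $N(x)N(y)=N(x\cdot_{\om}y)$ and hence Eq.(\ref{eq:ggg}).

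The computation is conceptually short, and I expect the only genuine obstacle to be bookkeeping rather than any deep difficulty: one must keep the two independent copies of $r$ straight (say $r,\br$ on the algebra side and $\rho,\sigma$ on the coalgebra side), apply the primal relation Eq.(\ref{eq:qt1}) in the third tensor slot while applying the dual relation Eq.(\ref{eq:cqt1}) inside the first leg of $r$, and use the symmetry of $r$ consistently so that the two three-term sums line up. The one place where a nontrivial cancellation (as opposed to mere relabelling) occurs is the symplectic reduction in the first step, so that is the part I would carry out most carefully.
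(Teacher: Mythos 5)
Your proof is correct, and it takes a genuinely different route from the paper's. The paper proves the theorem by a single direct computation: it expands the co-perm Yang--Baxter equation \eqref{eq:ccybe} for $\D_r$, combines it with the symplectic identity \eqref{eq:symp} to extract an auxiliary relation (their Eq.~\eqref{eq:cqt-sym}), and then evaluates $N(x)N(y)-N(N(x)y+xN(y)-N(xy))$ term by term, killing it with \eqref{eq:cqt-sym}, \eqref{eq:symp} and finally the perm Yang--Baxter equation \eqref{eq:uj}. You instead factor the argument through two structural identities: first $x\cdot_{\om}y=N(x)y+xN(y)-N(xy)$, which is exactly the symplectic reduction you flag as the delicate step (and which does work: applying \eqref{eq:symp} to $\om(xr^2,y)$ and $\om(r^2x,y)$ and the symmetry of $\om$ collapses the remainder to $-\om(xy,r^2)r^1=-N(xy)$, using that $r$ is symmetric so that $N(x)=\om(x,r^1)r^2=\om(x,r^2)r^1$); and second the multiplicativity $N(x\cdot_{\om}y)=N(x)N(y)$, obtained by contracting \eqref{eq:qt1} against $\om(x,-)\o\om(y,-)\o\id$ on one side and applying \eqref{eq:cqt1} with $z=r^1$ on the other, both sides reducing to $\om(x,{r^1}_{(1)})\om(y,{r^1}_{(2)})r^2$. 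What your route buys is conceptual transparency: it exhibits $\cdot_{\om}$ as the $N$-deformed product and identifies the Nijenhuis condition \eqref{eq:ggg} with the intertwining pair \eqref{eq:vb}--\eqref{eq:yq} from Theorem \ref{thm:rl}, making clear which hypothesis is responsible for which half of the statement; the paper's computation is shorter to write down but hides this structure. Both arguments use exactly the same hypotheses, since \eqref{eq:qt1} and \eqref{eq:cqt1} are the paper's own reformulations of the two quasitriangularity assumptions.
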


 \begin{proof} By Eqs.(\ref{eq:ccybe}) and (\ref{eq:cop}), for all $x, y, z\in \gg$, one has
 \begin{eqnarray*}
 &&\hspace{-10mm}0=\om(x r^1, z)\om(r^2, y)+\om(r^1, z)\om(x r^2, y)
 -\om(r^1,z)\om(r^2 x ,y)\\
 &&-\om(x, z r^1)\om(y, r^2)-\om(x, r^1)\om(y,z r^2)+\om(x,r^1)\om(y, r^2 z)\\
 &&+\om(y r^1, z)\om(r^2, x)+\om(r^1, z)\om(x, y r^2)-\om(r^1,z)\om(x ,r^2 y)\\
 &&-\om(x, y r^1)\om(z, r^2)-\om(x, r^1)\om(y r^2,z)+\om(x,r^1)\om(r^2 y,z)\\
 &&\hspace{-7mm}=\om(x r^1, z)\om(r^2, y)+\om(r^1, z)\om(x r^2, y)
 -\om(r^1,z)\om(r^2 x ,y)\\
 &&-\om(x, z r^1)\om(y, r^2)-\om(x, r^1)\om(y,z r^2)+\om(x,r^1)\om(y, r^2 z)\\
 &&-\om(r^1,z)\om(x ,r^2 y)+\om(x,r^1)\om(r^2 y,z).
 \end{eqnarray*}
 Since, further, $(\gg, \cdot, \om)$ is a symplectic perm algebra, we obtain
 \begin{eqnarray}
 &\om(r^2, y)\om(z x, r^1)-\om(r^2, y)\om(x z, r^1)-\om(r^1,z)\om(x y, r^2)+\om(x,r^1)\om(z y, r^2)=0.&\mlabel{eq:cqt-sym}
 \end{eqnarray}
 Now we can check that $N$ defined in Eq.(\ref{eq:ys}) is a Nijenhuis operator on $(\gg, \cdot)$.
 \begin{eqnarray*}
 &&\hspace{-15mm}N(x)N(y)-N(N(x)y+xN(y)-N(x y))\\
 &\stackrel{(\ref{eq:ys})}{=}&\om(x, r^1)\om(y, \bar{r}^1)r^2\bar{r}^2
 -\om(x, r^1)\om(r^2 y, \bar{r}^1)\bar{r}^2-\om(y, r^1)\om(xr^2, \bar{r}^1)\bar{r}^2\\
 &&+\om(xy, r^1)\om(r^2, \bar{r}^1)\bar{r}^2\\
 &\stackrel{(\ref{eq:cqt-sym})}{=}&\om(x, r^1)\om(y, \bar{r}^1)r^2 \bar{r}^2
 -\om(x, r^1)\om(r^2 y, \bar{r}^1)\bar{r}^2-\om(y, r^1)\om(xr^2, \bar{r}^1)\bar{r}^2\\
 &&-\om(r^2, y)\om(x\br^1, r^1)\br^2+\om(r^2,y)\om(\br^1x, r^1)\br^2+\om(x,r^1)\om(\br^1y,r^2)\br^2\\
 &\stackrel{(\ref{eq:symp})}{=}&\om(x, r^1)\om(y, \bar{r}^1)r^2\bar{r}^2-\om(x, \br^1 r^2)\om(y,r^1)\br^2+\om(x, r^1)\om(y,r^2\br^1)\br^2-\om(x,r^1)\om(y,\br^1 r^2)\br^2\\
 &\stackrel{(\ref{eq:uj})}{=}&0. 
 \end{eqnarray*}
 These finish the proof.
 \end{proof}

 \begin{ex}\mlabel{ex:thm:ln} Let $(\gg,\cdot)$ be a 2-dimensional perm algebra with basis $\{e_1,e_2\}$ and the product $\cdot$ given by
\begin{center}
        \begin{tabular}{r|rr}
          $\cdot$ & $e_1$  & $e_2$   \\
          \hline
           $e_1$ & $e_1$  & $0$ \\
           $e_2$ & $e_2$  & $0$  \\
        \end{tabular}.
        \end{center}
       Set $r=\l e_1\o e_1$ ($\l$ is a parameter), then $(\gg,\cdot,r,\D_r)$ is a quasitriangular perm bialgebra, where $\D_r$ is given as below:
        $$\left\{
            \begin{array}{l}
             \D_r(e_1)=\l e_1\o e_1\\
             \D_r(e_2)=\l(e_1\o e_2+e_2\o e_1)\\
            \end{array}
            \right..$$
 Set
 \begin{center}
        \begin{tabular}{r|rr}
          $\omega$ & $e_1$  & $e_2$   \\
          \hline
           $e_1$ & $\nu$  & $0$  \\
           $e_2$ & $0$  & $0$  \\
        \end{tabular}\quad ($\nu$ is a parameter),
        \end{center}
 then $(\gg,\D_r,\omega,\cdot_\omega)$ is a dual quasitriangular perm bialgebra. Then by Theorem \ref{thm:ln}, let
   $$\left\{
            \begin{array}{l}
             N(e_1)=\l\nu e_1\\
             N(e_2)=0\\
            \end{array}
            \right., $$
 then $(\gg,  N)$ is a Nijenhuis perm algebra.
 \end{ex}

 \N operators on perm coalgebra can be derived by the following way.
 \begin{defi}\mlabel{de:csym} Let $(\gg, \D)$ be a perm coalgebra and $r\in \gg\o \gg$ be a symmetric element. Assume that
 \begin{eqnarray*}\mlabel{eq:csymp1}
 {r^1}_{(1)}\o {r^1}_{(2)}\o r^2+{r^1}_{(1)}\o {r^2}\o {r^1}_{(2)}-{r^1}_{(2)}\o r^2\o {r^1}_{(1)}-{r^2}\o {r^1}_{(2)}\o {r^1}_{(1)}=0.
 \end{eqnarray*}
 Then $(\gg, \D)$ is called {\bf a cosymplectic perm coalgebra} denoted by $(\gg, \D, r)$.
 \end{defi}

 \begin{pro}\mlabel{pro:de:csym}
 Let $(\gg, \cdot)$ be a perm algebra and $r\in \gg\o \gg$ symmetric. If $(\gg, \cdot, r, \D_r)$ is a quasitriangular perm bialgebra, where $\D_r$ is defined by Eq.(\ref{eq:cop}), then $(\gg, \D_r, r)$ is a cosymplectic perm coalgebra.
 \end{pro}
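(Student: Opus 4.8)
The plan is to expand the cosymplectic identity Eq.(\ref{eq:csymp1}) by feeding $\D_r$ into the first leg of $r=r^1\o r^2=\br^1\o\br^2$, and then to identify the resulting four tensors, after using the symmetry of $r$, with the four quadratic expressions $r_{13}r_{23},r_{12}r_{23},r_{23}r_{12},r_{13}r_{12}$ whose alternating sum is exactly the classical perm Yang--Baxter element $P(r)$ of Eq.(\ref{eq:uj}).

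First I would observe that each of the four summands of Eq.(\ref{eq:csymp1}) is a permutation of the tensor legs of $(\D_r\o\id)(r)=\D_r(r^1)\o r^2$. Setting
\begin{eqnarray*}
&&T_1={r^1}_{(1)}\o{r^1}_{(2)}\o r^2,\quad T_2={r^1}_{(1)}\o r^2\o{r^1}_{(2)},\\
&&T_3={r^1}_{(2)}\o r^2\o{r^1}_{(1)},\quad T_4=r^2\o{r^1}_{(2)}\o{r^1}_{(1)},
\end{eqnarray*}
one has $T_2=\tau_{23}T_1$ (transposition of the last two legs), $T_4=\tau_{13}T_1$ (transposition of the outer legs), while $T_3$ is the image of $T_1$ under the cyclic shift $x\o y\o z\mapsto y\o z\o x$. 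Since $(\gg,\cdot,r,\D_r)$ is quasitriangular, Proposition \ref{pro:qt} supplies Eq.(\ref{eq:qt1}), i.e. $T_1=(\D_r\o\id)(r)=r^1\o\br^1\o r^2\br^2=r_{13}r_{23}$.

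Next I would apply the three permutations to this value of $T_1$ and simplify each outcome with the symmetry $r^1\o r^2=r^2\o r^1$ (and likewise for $\br$). This turns $T_2$ into $r^1\o r^2\br^1\o\br^2=r_{12}r_{23}$, turns $T_3$ into $\br^1\o r^1\br^2\o r^2=r_{23}r_{12}$, and turns $T_4$ into $r^1\br^1\o\br^2\o r^2=r_{13}r_{12}$. Consequently the left-hand side of Eq.(\ref{eq:csymp1}) becomes
\begin{eqnarray*}
T_1+T_2-T_3-T_4=(r_{13}r_{23}-r_{13}r_{12})+(r_{12}r_{23}-r_{23}r_{12}).
\end{eqnarray*}
Finally, the classical perm Yang--Baxter equation Eq.(\ref{eq:uj}), $r_{13}r_{12}-r_{13}r_{23}+r_{23}r_{12}-r_{12}r_{23}=0$ (valid because $r$ is a symmetric solution), rearranges to $r_{13}r_{23}-r_{13}r_{12}=r_{23}r_{12}-r_{12}r_{23}$; inserting this makes the two parenthesized groups negatives of one another, so the sum vanishes and Eq.(\ref{eq:csymp1}) holds.

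The step I expect to be the main obstacle is purely the bookkeeping in the third paragraph: the products $r^2\br^2$, $r^1\br^2$ and $\br^2 r^1$ are noncommutative, so I must be careful to apply each leg permutation to the whole tensor $T_1$ first and only afterwards use the symmetry of $r$ to move the permuted term onto the correct $r_{ij}r_{kl}$. Once these identifications are pinned down the appeal to $P(r)=0$ is immediate, and no perm-algebra axiom beyond what is already encoded in Eq.(\ref{eq:qt1}) and Eq.(\ref{eq:uj}) is required.
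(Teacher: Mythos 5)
Your proposal is correct and is essentially the computation the paper leaves implicit (its proof is just ``direct by Eqs.~(\ref{eq:uj}) and (\ref{eq:cop})''): identifying $T_1=(\D_r\o\id)(r)$ with $r_{13}r_{23}$ via Eq.~(\ref{eq:qt1}), carrying the leg permutations through, and using the symmetry of $r$ to land on $T_1+T_2-T_3-T_4=-P(r)=0$ all checks out. The only remark is that your final rearrangement is slightly roundabout, since the sum is literally $-P(r)$, but that changes nothing.
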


 \begin{proof} It is direct by Eqs.(\ref{eq:uj}) and (\ref{eq:cop}).
 \end{proof}

 \begin{thm}\mlabel{thm:cln} Let $(\gg, \D, r)$ be a cosymplectic perm coalgebra and $\om\in (\gg\o \gg)^*$. If $(\gg, \D, \om$, $\cdot_\om)$ is a dual quasitriangular perm bialgebra together with the multiplication $\cdot_\om$ defined in Eq.(\ref{eq:p}) and further $(\gg, \cdot_\om, r, \D_r)$ is a quasitriangular perm bialgebra together with the comultiplication $\D_r$ defined in Eq.(\ref{eq:cop}).
 Then $(\gg, \D, S)$ is a \N perm coalgebra, where $S:\gg\lr \gg$ is given by
 \begin{eqnarray*}
 &S(x)=r^1\om(r^2, x),~~\forall~x\in \gg.&\mlabel{eq:cys}
 \end{eqnarray*}
 \end{thm}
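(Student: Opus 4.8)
The plan is to prove this as the exact comultiplicative dual of Theorem \ref{thm:ln}, interchanging the roles of the product and the coproduct, of the symplectic form $\om$ and the $r$-matrix $r$, and of the perm and co-perm Yang--Baxter equations. Concretely, I must verify the Nijenhuis-coalgebra identity Eq.(\ref{eq:gd}) for $S$ defined by Eq.(\ref{eq:cys}), relative to the given coproduct $\D$; equivalently, I must show that
\[
D:=S(x_{(1)})\o S(x_{(2)})+\D(S^2(x))-(S\o\id)\D(S(x))-(\id\o S)\D(S(x))
\]
vanishes for every $x\in\gg$. The inputs are the perm Yang--Baxter equation Eq.(\ref{eq:uj}) for $r$ in $(\gg,\cdot_{\om})$ (quasitriangular hypothesis), the cosymplectic identity Eq.(\ref{eq:csymp1}), and the co-perm Yang--Baxter equation Eq.(\ref{eq:ccybe}) for the symmetric $\om$ in $(\gg,\D)$ (dual-quasitriangular hypothesis).

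First I would derive the dual analogue of the auxiliary identity Eq.(\ref{eq:cqt-sym}). Starting from $P(r)=0$ in $(\gg,\cdot_{\om})$ (Eq.(\ref{eq:uj})), I expand every product $r^1\cdot_{\om}\br^1$, $r^2\cdot_{\om}\br^2$, $r^1\cdot_{\om}\br^2$, $r^2\cdot_{\om}\br^1$ using the definition Eq.(\ref{eq:p}) of $\cdot_{\om}$, and then collapse the resulting terms with the cosymplectic identity Eq.(\ref{eq:csymp1}) and the symmetry $r=\tau(r)$. The output plays precisely the role that Eq.(\ref{eq:cqt-sym}) plays in the proof of Theorem \ref{thm:ln}, but with the roles of $\om$ and $\D$ exchanged.

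Next I would substitute Eq.(\ref{eq:cys}) into $D$ with the original $\D$. Using two copies $r,\br$ of the $r$-matrix and $S^2(x)=\br^1\,\om(\br^2,r^1)\om(r^2,x)$, one finds $S(x_{(1)})\o S(x_{(2)})=r^1\o\br^1\,\om(r^2,x_{(1)})\om(\br^2,x_{(2)})$, $\D(S^2(x))={\br^1}_{(1)}\o{\br^1}_{(2)}\,\om(\br^2,r^1)\om(r^2,x)$, $(S\o\id)\D(S(x))=\br^1\o{r^1}_{(2)}\,\om(\br^2,{r^1}_{(1)})\om(r^2,x)$, and $(\id\o S)\D(S(x))={r^1}_{(1)}\o\br^1\,\om(\br^2,{r^1}_{(2)})\om(r^2,x)$; note that only the coproduct $\D$ (never $\cdot_{\om}$) appears, mirroring the fact that only $\cdot$ appears in the closing computation of Theorem \ref{thm:ln}. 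I would then rewrite the $\D(r^1)$ and $\D(\br^1)$ splittings via the cosymplectic identity Eq.(\ref{eq:csymp1}), apply the Step-1 identity together with the symmetry of $r$ and $\om$, and recognize what remains as a co-contraction, built from $\om$ and $\D$, that realizes the classical co-perm Yang--Baxter combination Eq.(\ref{eq:ccybe}), which is zero. This mirrors the closing computation of Theorem \ref{thm:ln}, with perm replaced by co-perm and symplectic by cosymplectic throughout.

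The hard part will be Step 1, namely guessing the correct collapsed form of the expanded perm Yang--Baxter equation, together with the index bookkeeping needed to align three copies of $r$ and the various Sweedler legs so that the cosymplectic identity and the co-perm Yang--Baxter equation apply cleanly; the algebra is routine but error-prone. A conceptual shortcut (cleanest in finite dimensions) avoids all of this: since $S$ is given by the same formula as the operator $N$ of Theorem \ref{thm:ln} (using symmetry of $r$ and $\om$), and since Eq.(\ref{eq:gd}) for $S$ on $(\gg,\D)$ is the linear dual of the Nijenhuis-algebra identity Eq.(\ref{eq:ggg}) for $S^*$ on the dual perm algebra $(\gg^*,\D^*)$, one checks that the cosymplectic, dual-quasitriangular and quasitriangular hypotheses here dualize to exactly the symplectic, quasitriangular and dual-quasitriangular hypotheses of Theorem \ref{thm:ln} on $\gg^*$, and that the operator it produces is $S^*$; that theorem then yields the result at once.
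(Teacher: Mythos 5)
Your proposal is correct and matches the paper's intent exactly: the paper's proof of this theorem is literally ``Similar to Theorem \ref{thm:ln}'', and your two-step plan (derive the dual of the auxiliary identity Eq.(\ref{eq:cqt-sym}) from the perm Yang--Baxter equation in $(\gg,\cdot_\om)$ plus the cosymplectic identity, then close the Nijenhuis-coalgebra defect using that identity, Eq.(\ref{eq:csymp1}), and the co-perm Yang--Baxter equation) is precisely the dualization the paper has in mind, with the Sweedler-leg expansions you wrote down being the correct ones. The finite-dimensional dualization shortcut you mention at the end is a reasonable alternative but is not needed; the direct mirrored computation is what the paper does.
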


 \begin{proof} Similar to Theorem \ref{thm:ln}.
 \end{proof}

 \begin{ex}\mlabel{ex:thm:xc} Let $(\gg,\D)$ be a 2-dimensional perm coalgebra with basis $\{e_1,e_2\}$ and the coproduct $\D$ given by
 $$\left\{
            \begin{array}{c}
           \D(e_1)=e_1\o e_1\\
           \D(e_2)=e_2\o e_1\\
            \end{array}
            \right..$$
            
         Set
 \begin{center}
        \begin{tabular}{r|rr}
          $\omega$ & $e_1$  & $e_2$   \\
          \hline
           $e_1$ & $\nu$  & $0$  \\
           $e_2$ & $0$  & $0$  \\
        \end{tabular}\quad,
        \end{center}
 then $(\gg,\D,\omega,\cdot_\omega)$ is a dual triangular perm bialgebra with the multiplication $\cdot_\omega$ :
 \begin{center}
        \begin{tabular}{r|rr}
          $\cdot_\omega$ & $e_1$  & $e_2$   \\
          \hline
           $e_1$ & $\nu e_1$  & $\nu e_2$  \\
           $e_2$ & $\nu e_2$  & $0$  \\
        \end{tabular},
        \end{center}
 where $\nu$ is a parameter.

 Set an element $r\in \gg\o \gg$ by
   \begin{eqnarray*}
  r=\l e_1\o e_1,
  \end{eqnarray*}
  where $\l$ is parameter, then $(\gg,\cdot_\omega,r,\D_r)$ is a triangular perm bialgebra. Then by Theorem \ref{thm:cln}, let
   $$\left\{
            \begin{array}{l}
             S(e_1)=\l\nu e_1\\
             S(e_2)=0\\
            \end{array}
            \right., $$
 then $(\gg, \D, S)$ is a Nijenhuis perm coalgebra.
 \end{ex}
\vskip-6mm

 \section*{Acknowledgment} The authors are deeply indebted to the referee and the editor for their very useful suggestions and some improvements to the original manuscript. This work is supported by National Natural Science Foundation of China (No. 12471033) and Natural Science Foundation of Henan Province (No. 242300421389).
\bigskip

\noindent
 {\bf Author Declarations} The authors have no conflicts to disclose.
 
\noindent
 {\bf Data Availability Statement} No data was used for the research described in the article.

\end{document}